\newcommand{\sB}{\ensuremath{\mathscr{B}}\xspace}
\newcommand{\sC}{\ensuremath{\mathscr{C}}\xspace}
\newcommand{\sF}{\ensuremath{\mathscr{F}}\xspace}
\newcommand{\sG}{\ensuremath{\mathscr{G}}\xspace}
\newcommand{\sI}{\ensuremath{\mathscr{I}}\xspace}
\newcommand{\sL}{\ensuremath{\mathscr{L}}\xspace}
\newcommand{\sT}{\ensuremath{\mathscr{T}}\xspace}
\newcommand{\fkb}{\ensuremath{\mathfrak{b}}\xspace}
\newcommand{\fkd}{\ensuremath{\mathfrak{d}}\xspace}
\newcommand{\fkg}{\ensuremath{\mathfrak{g}}\xspace}
\newcommand{\fkh}{\ensuremath{\mathfrak{h}}\xspace}
\newcommand{\fkl}{\ensuremath{\mathfrak{l}}\xspace}
\newcommand{\fkp}{\ensuremath{\mathfrak{p}}\xspace}
\newcommand{\fkq}{\ensuremath{\mathfrak{q}}\xspace}
\newcommand{\fkt}{\ensuremath{\mathfrak{t}}\xspace}
\newcommand{\fku}{\ensuremath{\mathfrak{u}}\xspace}
\newcommand{\BA}{\ensuremath{\mathbb {A}}\xspace}
\newcommand{\BC}{\ensuremath{\mathbb {C}}\xspace}
\newcommand{\BD}{\ensuremath{\mathbb {D}}\xspace}
\newcommand{\BG}{\ensuremath{\mathbb {G}}\xspace}
\newcommand{\BN}{\ensuremath{\mathbb {N}}\xspace}
\newcommand{\BP}{\ensuremath{\mathbb {P}}\xspace}
\newcommand{\BQ}{\ensuremath{\mathbb {Q}}\xspace}
\newcommand{\BR}{\ensuremath{\mathbb {R}}\xspace}
\newcommand{\BZ}{\ensuremath{\mathbb {Z}}\xspace}
\newcommand{\CE}{\ensuremath{\mathcal {E}}\xspace}
\newcommand{\CF}{\ensuremath{\mathcal {F}}\xspace}
\newcommand{\CG}{\ensuremath{\mathcal {G}}\xspace}
\newcommand{\CI}{\ensuremath{\mathcal {I}}\xspace}
\newcommand{\CL}{\ensuremath{\mathcal {L}}\xspace}
\newcommand{\CO}{\ensuremath{\mathcal {O}}\xspace}
\newcommand{\CP}{\ensuremath{\mathcal {P}}\xspace}
\newcommand{\RI}{\ensuremath{\mathrm {I}}\xspace}
\newcommand{\RP}{\ensuremath{\mathrm {P}}\xspace}
\newcommand{\ad}{{\mathrm{ad}}}
\DeclareMathOperator{\Aut}{Aut}
\newcommand{\ch}{{\mathrm{ch}}}
\newcommand{\der}{{\mathrm{der}}}
\DeclareMathOperator{\Ext}{Ext}
\DeclareMathOperator{\Gal}{Gal}
\newcommand{\GL}{{\mathrm{GL}}}
\DeclareMathOperator{\Hom}{Hom}
\newcommand{\id}{\ensuremath{\mathrm{id}}\xspace}
\let\Im\relax
\DeclareMathOperator{\Im}{Im}
\newcommand{\Ind}{{\mathrm{Ind}}}
\newcommand{\Int}{\ensuremath{\mathrm{Int}}\xspace}
\newcommand{\inva}{\mathrm{IIP}}
\DeclareMathOperator{\Ker}{Ker}
\DeclareMathOperator{\Lie}{Lie}
\DeclareMathOperator{\ord}{ord}
\DeclareMathOperator{\Rep}{Rep}
\DeclareMathOperator{\Sh}{Sh}
\newcommand{\SL}{{\mathrm{SL}}}
\DeclareMathOperator{\supp}{supp}
\DeclareMathOperator{\rig}{rig}
\DeclareMathOperator{\alg}{alg}
\newcommand{\Tor}{{\mathrm{Tor}}}
\DeclareMathOperator{\Tot}{Tot}
\DeclareMathOperator{\wa}{wa}
\newcommand{\bB}{\ensuremath{\mathbf {B}}\xspace}
\newcommand{\bG}{\ensuremath{\mathbf {G}}\xspace}
\newcommand{\bH}{\ensuremath{\mathbf {H}}\xspace}
\newcommand{\bJ}{\ensuremath{\mathbf {J}}\xspace}
\newcommand{\bL}{\ensuremath{\mathbf {L}}\xspace}
\newcommand{\bP}{\ensuremath{\mathbf {P}}\xspace}
\newcommand{\bT}{\ensuremath{\mathbf {T}}\xspace}
\newcommand{\bU}{\ensuremath{\mathbf {U}}\xspace}
\newcommand{\lb}{\ensuremath{\lvert }\xspace}
\newcommand{\rb}{\ensuremath{\rvert }\xspace}
\newcommand{\ov}{\overline}
\newcommand{\br}{\breve}
\newtheorem{theorem}{Theorem}[section]
\newtheorem{proposition}[theorem]{Proposition}
\newtheorem{lemma}[theorem]{Lemma}
\newtheorem{corollary}[theorem]{Corollary}
\theoremstyle{definition}
\newtheorem{definition}[theorem]{Definition}
\newtheorem{example}[theorem]{Example}
\newtheorem{remark}[theorem]{Remark}
\newtheorem{hac}[theorem]{Assumption/Conjecture}
\numberwithin{equation}{section}
\numberwithin{equation}{section}
\renewcommand{\to}{%
   \ifbool{@display}{\longrightarrow}{\rightarrow}%
   }
\let\shortmapsto\mapsto
\renewcommand{\mapsto}{%
   \ifbool{@display}{\longmapsto}{\shortmapsto}%
   }
\newlength{\olen}
\newlength{\ulen}
\newlength{\xlen}
\newcommand{\xra}[2][]{%
   \ifbool{@display}%
      {\settowidth{\olen}{$\overset{#2}{\longrightarrow}$}%
       \settowidth{\ulen}{$\underset{#1}{\longrightarrow}$}%
       \settowidth{\xlen}{$\xrightarrow[#1]{#2}$}%
       \ifdimgreater{\olen}{\xlen}%
          {\underset{#1}{\overset{#2}{\longrightarrow}}}%
          {\ifdimgreater{\ulen}{\xlen}%
             {\underset{#1}{\overset{#2}{\longrightarrow}}}
             {\xrightarrow[#1]{#2}}}}%
      {\xrightarrow[#1]{#2}}
   }
\newcommand{\xyra}[2][]{%
   \settowidth{\xlen}{$\xrightarrow[#1]{#2}$}%
   \ifbool{@display}%
      {\settowidth{\olen}{$\overset{#2}{\longrightarrow}$}%
       \settowidth{\ulen}{$\underset{#1}{\longrightarrow}$}%
       \ifdimgreater{\olen}{\xlen}%
          {\mathrel{\xymatrix@M=.12ex@C=3.2ex{\ar[r]^-{#2}_-{#1} &}}}%
          {\ifdimgreater{\ulen}{\xlen}%
             {\mathrel{\xymatrix@M=.12ex@C=3.2ex{\ar[r]^-{#2}_-{#1} &}}}
             {\mathrel{\xymatrix@M=.12ex@C=\the\xlen{\ar[r]^-{#2}_-{#1} &}}}}}%
      {\mathrel{\xymatrix@M=.12ex@C=\the\xlen{\ar[r]^-{#2}_-{#1} &}}}%
   }
\newcommand{\xla}[2][]{%
   \ifbool{@display}%
      {\settowidth{\olen}{$\overset{#2}{\longleftarrow}$}%
       \settowidth{\ulen}{$\underset{#1}{\longleftarrow}$}%
       \settowidth{\xlen}{$\xleftarrow[#1]{#2}$}%
       \ifdimgreater{\olen}{\xlen}%
          {\underset{#1}{\overset{#2}{\longleftarrow}}}%
          {\ifdimgreater{\ulen}{\xlen}%
             {\underset{#1}{\overset{#2}{\longleftarrow}}}
             {\xleftarrow[#1]{#2}}}}%
      {\xleftarrow[#1]{#2}}
   }
\newcommand{\isoarrow}{%
   \ifbool{@display}{\overset{\sim}{\longrightarrow}}{\xrightarrow\sim}%
   }
\newsavebox{\lineone}
\newsavebox{\linetwo}
\newsavebox{\linethree}
\newlength{\lineonelen}
\newlength{\linetwolen}
\newlength{\linethreelen}
\newlength{\biggerlen}
\newcommand{\twolinestight}[2]{%
   \sbox{\lineone}{#1}%
   \sbox{\linetwo}{#2}%
   \settowidth{\lineonelen}{\usebox{\lineone}}%
   \settowidth{\linetwolen}{\usebox{\linetwo}}%
   \setlength{\biggerlen}{\maxof{\lineonelen}{\linetwolen}}%
   \begin{minipage}{\the\biggerlen}%
      \centering
      \usebox\lineone\\%
      \usebox\linetwo%
   \end{minipage}%
   }
\begin{document}


\title[On the sheaf cohomology of some $p$-adic period domains]{On the sheaf cohomology of some $p$-adic period domains with coefficients in certain line bundles}
\author{C. Spenke }
\thanks{The research was done while the author was employed at the Bergische Universität Wuppertal. }
\date{\today}
\maketitle	
\setcounter{tocdepth}{1}

	\begin{abstract}
		Let $p$ be a prime.
		This papers aims at investigating sheaf cohomology of a broader class of $p$-adic period domains, other then the Drinfeld's upper half space (cf. \cite{O2}). 
		Concretely, we let $\bG$ be a split connected reductive group over $\BQ_p$ and restrict our attention to the $p$-adic period domain $\sF^{\wa}$ which parametrizes the weakly admissible filtrations on the trivial $\bG$-isocrystal inside a complete flag variety $\sF$. Then, we consider sheaf cohomology of $\sF^{\wa}$ with coefficients in vector bundles which are induced by restriction of a homogeneous line bundle on $\sF$ associated to a dominant weight of $\bG$.

	 \end{abstract}

\tableofcontents
\section{Introduction}\label{s:Introduction} 

The origin of period domains lies in the work of Griffiths \cite{Gr1, Gr2}. He introduced them as certain open subspaces of generalized flag varieties over $\BC$ which parametrize polarized $\BR$-Hodge structures of a given type. Rapoport and Zink introduced period domains over $p$-adic fields \cite{RZ} in which we are interested in this paper. For this, let $K=\BQ_p$. Given a reductive group $\bG$ over $K$, a period domain over $K$ parametrizes weakly admissible filtrations on a $\bG$-isocrystal of a fixed Hodge type. It is an open admissible rigid-analytic subset of a generalized flag variety $\sF$ (cf. section \ref{s:perdom}). The prototype for a $p$-adic period domain over $K$ is Drinfeld's upper half space $\Omega^{(n+1)}$, which is the complement of all $K$-rational hyperplanes in the projective space $\BP^{n}_K$, i.e. 
$$ \Omega^{(n+1)}= \BP^{n}_K \backslash \bigcup_{ H \subsetneq K^{n+1}} \BP(H).$$ 
It arises from the trivial $\mathbf{GL}_{n+1}$-isocrystal inside the projective space $\sF=\BP^{n}_K$.  \\

Given an appropriate cohomology theory, it is a natural problem to determine cohomology groups for period domains. The starting point is the work of Drinfeld \cite{D}, who computed the first étale cohomology group of $\Omega^{(2)}$. Schneider and Stuhler \cite{SS} computed cohomology groups of $\Omega^{(n+1)}$ in the $p$-adic case for ``good'' cohomology theories. This includes the étale cohomology with torsion coefficients, not including $p$-torsion, and the de Rham cohomology. 
So far, the only results for coherent sheaf cohomology are known for Drinfeld's upper half space over $p$-adic fields. After the work of Schneider and Stuhler, it was Schneider together with Teitelbaum, who made the beginning and considered at first coefficients in the canonical bundle \cite{ST1}. Shortly afterwards, Pohlkamp \cite{Po} computed the sheaf cohomology with respect to the structure sheaf. Finally, Orlik was able to generalize these results to arbitrary $\mathbf{GL}_{n+1}$-equivariant vector bundles on Drinfeld's upper half space over $p$-adic fields which are induced by restriction of a homogeneous vector bundle $\CE$ on $\BP^{n}_K$ \cite{O2}. It turns out that by Schneider and Teitelbaum \cite{ST1}, the space of global sections $\CE(\Omega^{(n+1)})$ is a reflexive $K$-Fréchet space with a continuous $\mathbf{GL}_{n+1}(K)$-action and its strong dual is a locally analytic $\mathbf{GL}_{n+1}(K)$-representation.\\

The goal of this paper is to investigate sheaf cohomology of period domains over $p$-adic fields, other than $\Omega^{(n+1)}$, with coefficients in certain line bundles. For this, let $\bG$ be a split connected reductive group over $K$ with split maximal torus $\bT$. Further, let $\bB \supset \bT$ a Borel subgroup of $\bG$ associated to a cocharacter $\mu \in X_*(\bG)$ defined over $K$. Then, we consider the associated period domain $\sF^{\wa}$ which parametrizes weakly admissible filtrations on the trivial $\bG$-isocrystal of Hodge type $\mu$ inside the complete flag variety $\sF:=\bG/\bB$. We study the sheaf cohomology of these spaces with respect to the restriction of a homogeneous line bundle $\CE_\lambda:=\CL_\lambda \otimes \omega_{\sF}$ on $\sF$. Here, $\omega_{\sF}$ denotes the canonical bundle on $\sF$ and $\CL_\lambda$ a line bundle associated to a dominant weight $\lambda  \in X^*(\bT)$ wih respect to $\bB$ (cf. section \ref{s:Setup}). 

Let $G$ and $B$ be the $K$-valued points of $\bG$ and $\bB$, respectively. Furthermore, let $W$ be the Weyl group of $\bG$. For $w \in W$, we denote by $V_B^G(w)$ the twisted generalized locally analytic Steinberg representation of weight $w\cdot \lambda$ (cf. Definition \ref{twistedSteinberg}). It is a quotient of $\Ind^G_{B}({K_{w\cdot \lambda}}')$, the locally analytic induced representation of the dual of the simple algebraic $\bT$-representation of weight $w \cdot \lambda$.  Under the assumption of a hypothesis concerning the density of some local cohomology groups (cf. Assumption \ref{hypo1}), we prove the following result.

\begin{theorem}[Theorem \ref{theorem1}]\label{theorem1intro}
   Let $i_0:=\dim\sF-\lb\Delta\rb$. The homology of the (chain) complex 
   \begin{equation}\label{Cbullet}
      C_\bullet:  \bigoplus_{\substack{w \in \Omega_\emptyset \\ l(w)=\dim Y_\emptyset}}V^G_B(w) \leftarrow \ldots \leftarrow \bigoplus_{\substack{w \in \Omega_\emptyset \\ \ l(w)=1}} V_B^G(w) \leftarrow  V^G_B(\lambda) 
   \end{equation}
   starting in degree $i_0$ coincides with $H^*(\sF^{\wa},\CE_\lambda)'$, i.e. $H_*(C_\bullet)=H^*(\sF^{\wa},\CE_\lambda)'$.
\end{theorem}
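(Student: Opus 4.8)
The strategy is the local cohomology method of Schneider--Stuhler and, in the coherent-sheaf setting, Orlik's treatment of Drinfeld's upper half space, carried out now on the full flag variety $\sF=\bG/\bB$ and for $\CE_\lambda=\CL_\lambda\otimes\omega_\sF$. \emph{Step 1: the geometric complex.} By construction (cf.\ section~\ref{s:perdom}), $\sF^{\wa}$ is the complement in the rigid-analytic flag variety $\sF$ of the weakly non-admissible locus $Y$, which modulo the $G$-action is the union of the closed subvarieties $g\cdot Y_I\subset\sF$ attached to the proper standard parabolics $\bP_I\supset\bB$, $I\subsetneq\Delta$, and their $K$-rational translates $g\in G/\bP_I(K)$; here $\bB=\bP_\emptyset$ gives the stratum $Y_\emptyset$. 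Splicing the local cohomology long exact sequence
\[
\cdots\longrightarrow H^i_{Y}(\sF,\CE_\lambda)\longrightarrow H^i(\sF,\CE_\lambda)\longrightarrow H^i(\sF^{\wa},\CE_\lambda)\longrightarrow H^{i+1}_{Y}(\sF,\CE_\lambda)\longrightarrow\cdots
\]
with the Mayer--Vietoris spectral sequence for the covering of $Y$ by the $g\cdot Y_I$, one gets a ``fundamental'' spectral sequence converging to $H^\ast(\sF^{\wa},\CE_\lambda)$ whose $E_1$-page assembles the rigid-analytic (in general infinite-dimensional) local cohomology groups $H^\ast_{g\cdot Y_I}(\sF,\CE_\lambda)$ together with the finite-dimensional algebraic cohomology $H^\ast(\sF,\CE_\lambda)$. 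The $G$-equivariance is essential: after a suitable topologization it turns the sums $\bigoplus_{g}H^\ast_{g\cdot Y_I}(\sF,\CE_\lambda)$ into strong duals of \emph{locally analytic} induced $G$-representations.

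\emph{Step 2: Bott's theorem and the canonical twist.} Each local cohomology group $H^\ast_{Y_I}(\sF,\CE_\lambda)$ is computed, via excision and a Koszul/Gysin resolution along the smooth closed subvariety $Y_I$, from the cohomology of line bundles on the sub-flag-varieties determined by $I$; applying the theorem of Borel--Weil--Bott there and using that $\lambda$ is dominant — so that the dot-action $w\cdot\lambda=w(\lambda+\rho)-\rho$ governs the surviving degrees — one finds that the nonzero contributions are indexed by the Weyl elements $w\in\Omega_\emptyset$ and lie in cohomological degree $l(w)$ plus the appropriate codimension. The twist by the canonical bundle $\omega_\sF$ is precisely what makes this bookkeeping uniform: it accounts for the clean form of duality on $\sF^{\wa}$ and for the way the contributions of the various strata, after dualizing, fit together as quotients of $\Ind^G_B(K_{w\cdot\lambda}')$ — the stratum $Y_\emptyset$ producing the inductions $\Ind^G_B(K_{w\cdot\lambda}')$ themselves and the larger strata $Y_I$, $I\neq\emptyset$, producing the induced subrepresentations that one divides out to obtain $V^G_B(w)$ as in Definition~\ref{twistedSteinberg}.

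\emph{Step 3: comparison and assembly; the role of Assumption~\ref{hypo1}.} The delicate point is that the rigid-analytic local cohomology $H^\ast_{Y_I}(\sF,\CE_\lambda)$ is strictly larger than its algebraic model — it sees analytic ``Laurent tails'' transverse to $Y_I$ — and that taking strong continuous duals is not obviously exact. This is exactly what Assumption~\ref{hypo1} controls: the density of the algebraic local cohomology in the analytic one forces the strong duals of the $E_1$-terms to be the expected locally analytic inductions and forces the dual of the fundamental spectral sequence to be a genuine complex of such representations with the geometric (restriction and Gysin) differentials. Identifying these differentials, via the Borel--Weil--Bott isomorphisms of Step~2, with the algebraic differentials of the generalized Steinberg resolution, one obtains that $H^\ast(\sF^{\wa},\CE_\lambda)'$ is computed by a chain complex whose term in homological degree $i_0+j$ is $\bigoplus_{w\in\Omega_\emptyset,\,l(w)=j}V^G_B(w)$ — that is, the complex $C_\bullet$ of \eqref{Cbullet}. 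The bottom degree $i_0=\dim\sF-\lvert\Delta\rvert$ is the sum of the codimension of $Y_\emptyset$ in $\sF$ and the Bott degree of its normal contribution at $w=e$, so the bottom term is $V^G_B(\lambda)$; the top degree $i_0+\dim Y_\emptyset$ comes from the same count for the largest relevant Schubert stratum. Taking homology gives $H_\ast(C_\bullet)=H^\ast(\sF^{\wa},\CE_\lambda)'$.

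\emph{Main obstacle.} The heart of the argument is Step~3: controlling the rigid-analytic local cohomology groups and their strong duals, checking that the fundamental spectral sequence degenerates as claimed, and matching its differentials with those of the generalized Steinberg complex. This is where non-quasicompactness bites — the union defining $Y$ runs over the infinite coset spaces $G/\bP_I(K)$, so the relevant representations are genuinely locally analytic rather than finite-dimensional, and one must work throughout with topological $G$-modules over $p$-adic locally convex spaces, in the style of Schneider--Teitelbaum and Orlik. Assumption~\ref{hypo1} is introduced to isolate and neutralize the single hardest comparison; granting it, what remains is Bott's theorem on the (sub-)flag varieties, the combinatorial bookkeeping of Weyl lengths and codimensions, and the compatibility check between geometric and representation-theoretic differentials.
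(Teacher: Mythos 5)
Your outline captures the broad framework correctly — Orlik's fundamental complex on the boundary $Y$, the passage to strong duals under Assumption~\ref{hypo1}, and the appearance of locally analytic $G$-representations — but there are two genuine gaps that prevent the argument from closing.

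First, in Step~2 you compute $H^\ast_{Y_I}(\sF,\CE_\lambda)$ by ``applying Borel--Weil--Bott on the sub-flag-varieties determined by $I$''; this yields finite-dimensional line-bundle cohomology, which is not what appears. The paper instead uses a Cousin-type complex built from the stratification of $Y_I$ by generalized Schubert cells $C_I(w)$ (cf.\ Lemma~\ref{cohY_I}), and each local cohomology group $H^{n-l(w)}_{C_I(w)}(\sF,\CE_\lambda)$ is identified with the \emph{generalized parabolic Verma module} $M_I(w\cdot\lambda)$ — an infinite-dimensional $U(\fkg)$-module in $\CO^{\fkp_I}_{\mathrm{alg}}$ (Lemma~\ref{C_I(w)}, whose proof uses flat $\BZ$-models, comparison with $\BC$, and the BGG resolution; BWB enters only as an auxiliary input for the global cohomology). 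Getting Verma modules rather than line bundle cohomology is what lets $\CF^G_{P_I}$ turn the picture into $I^G_{P_I}(w)=\Ind^G_{P_I}(V_I(w\cdot\lambda)')$, i.e.\ the full locally analytic inductions, instead of finite-dimensional representations.

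Second, and more seriously, Step~3 skips the key homological-algebra idea. After dualizing, the $E_1$-page of the fundamental spectral sequence has terms $\bigoplus_I \CF^G_{P_I}(H^q_{Y_I}(\sF,\CE_\lambda))$, i.e.\ sums of $I^G_{P_I}(w)$'s indexed by pairs $(I,w)$ — these are \emph{not} the twisted Steinberg representations $V^G_B(w)$, and you cannot read off $C_\bullet$ by ``identifying differentials.'' The paper instead replaces each $H^q_{Y_I}$ by the Cousin complex computing it and assembles a second-quadrant double chain complex $D_{\bullet,\bullet}$ with entries $\bigoplus_{I,\,w} I^G_{P_I}(w)$. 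One of its two associated spectral sequences recovers the dualized fundamental spectral sequence, hence converges to $H^\ast(\sF^{\wa},\CE_\lambda)'$; the other reorganizes the terms by $w$ and, column-by-column, consists of the relative Steinberg resolutions of Corollary~\ref{relativeresolution}, which are exact except at one spot where the cokernel is $V^G_B(w)$. This degenerates to exactly the complex $C_\bullet$. The reorganization by $w$ is only possible because of Lemma~\ref{help}, which shows $W^I\cap\Omega_I\subset\Omega_\emptyset$ for every $I\subsetneq\Delta$ (via the inverse Cartan matrix having positive entries); this combinatorial input is essential and absent from your sketch. Without the double complex and Lemma~\ref{help}, there is no route from the fundamental spectral sequence to the chain complex $C_\bullet$.
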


Here, $\Delta$ is the set of simple roots of the root system of $\bG$ with respect to $\bB$. Further, $\Omega_\emptyset$ is a subset of $W$  defined by some numerical conditions (cf. (\ref{omegaI})) and $Y_\emptyset$ a union of Schubert cells in $\sF$ indexed by $\Omega_\emptyset$ which is closed in $\sF$ (cf. subsection \ref{s:GeoProp}). Moreover, $H^i(\sF^{\wa},\CE_\lambda)'$ denotes the strong dual of $H^i(\sF^{\wa},\CE_\lambda)$ in the sense of Schneider and Tei\-telbaum (cf. \cite[p. 50]{S}). \\

We briefly explain the idea of the proof. For this let be $Y:=\sF^{\ad} \backslash \sF^{\wa}$ with closed embedding $\iota:Y\hookrightarrow \sF^{\mathrm{ad}}$.  The main ingredient is the acyclic complex 
\begin{equation}\label{fundamentalcomplexintro}
   0 \longrightarrow \BZ \longrightarrow \bigoplus\limits_{\substack{I \subset \Delta \\ \lb \Delta \backslash I \rb = 1 }} \BZ_I \longrightarrow \bigoplus\limits_{\substack{I \subset \Delta \\ \lb \Delta \backslash I \rb = 2 }} \BZ_I \longrightarrow \ldots \longrightarrow \bigoplus\limits_{\substack{I \subset \Delta \\ \lb \Delta \backslash I \rb = \lb \Delta \rb -1 }} \BZ_I \longrightarrow \BZ_\emptyset \longrightarrow 0
\end{equation}
on the étale site of $Y$ (cf. \cite[Theorem 6.9]{CDHN}). By applying $\Ext^*(\iota_*(-),\CE_\lambda)$ to the complex (\ref{fundamentalcomplexintro}), we get the spectral sequence 
\begin{equation*}\label{spectralintro}
  \hat{E}_1^{-p,q}=\Ext^q(\bigoplus\limits_{\substack{I \subsetneq \Delta \\ \lb \Delta \backslash I \rb = p+1 }} \iota_*(\BZ_I), \CE_\lambda) \Rightarrow \Ext^{-p+q}(\iota_*(\BZ_{Y}),\CE_\lambda)=H^{-p+q}_{Y}(\sF^{\mathrm{ad}},\CE_\lambda). 
\end{equation*}
From this we find a spectral sequence $(E_r, d_r)_{r \geq 1}$ converging to $H^*(\sF^{\mathrm{wa}},\CE_\lambda)$.
We construct a second quadrant double chain complex $D_{\bullet, \bullet}$ (cf. (\ref{double})), where one of the two induced standard spectral sequences coincides with the strong dual of $(E_r, d_r)_{r \in \BN_{\geq 1}}$ and the other one yields the chain complex in Theorem \ref{theorem1intro}. A crucial point in the proof is that 
   $$H^i_{C_I(w)}(\sF,\CE_\lambda)\cong\begin{cases} M_I(w\cdot\lambda) &i=n-l(w),\\ 0 &\text{else} \end{cases}$$
for $I \subset \Delta$ and $w \in W^I$ (cf. Lemma \ref{C_I(w)})\footnote{Here, $C_I(w)$ is a generalized Schubert cell in $\sF$ (cf. Defintion \ref{defC_Iw}) and $M_I(w\cdot\lambda)$ is a generalized parabolic Verma module of highest weight $w\cdot\lambda$ (cf. Example \ref{genVM})}. Because of this fact we can identify each column of the $E_1$-page with the homology of a chain complex which leads to the construction of $D_{\bullet, \bullet}.$\\

For the analysis of (\ref{Cbullet}), we first noted that Orlik und Schraen described the Jordan-Hölder factors of $V^G_B(e)$  (cf. \cite[Theorem 4.6]{OSch}). We partially generalize this to all objects in  $C_\bullet$ (cf. (\ref{Cbullet})). 
\begin{theorem}[Theorem \ref{multiplicities}]\label{multiplicitiesintro}
   \footnote{If the root sytem $\Phi(\bG,\bT)$ has irreducible components of type $B$, $C$, or $F_4$, we assume $p>2$, and if $\Phi(\bG,\bT)$ has irreducible components of type $G_2$, we assume that $p>3$ (cf. \cite[Section 5]{OSt}).\label{fn:note1} } Fix $w,v \in W$ and let $I_0:=I(w)$ respectively $I:=I(v)$ (cf. (\ref{maximalI})). For a subset $J \subset \Delta$ with $J \subset I$, 
   let $v_{P_J}^{P_I}$ be the generalized smooth Steinberg representation of $L_{P_I}$. Then, the multiplicity of the irreducible $G$-representation 
   $\CF^G_{P_I}(L(v\cdot \lambda),v_{P_J}^{P_I})$ in $V^G_B(w)$ is 
   $$ \sum_{\substack{w' \in W  \\ \supp(w')=J \cap I_0}} (-1)^{\ell(w')+\lb J \cap I_0 \rb} m(w'w,v)$$ 
   and we obtain in this way all the Jordan-Hölder factors of $V^G_B(w)$. 
\end{theorem}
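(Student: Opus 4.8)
The plan is to determine the class of $V^G_B(w)$ in the Grothendieck group of admissible locally analytic $G$-representations and then to upgrade that Euler-characteristic identity into a genuine statement about Jordan--Hölder factors by a positivity argument. Write $I_0:=I(w)$. By Definition \ref{twistedSteinberg} together with the Orlik--Strauch formalism, $\Ind^G_B((K_{w\cdot\lambda})')$ is the representation $\CF^G_B(M(w\cdot\lambda),\mathbf 1)$ and $V^G_B(w)$ is obtained from it by imposing the ``generalized Steinberg relations'' indexed by the standard parabolics $P_I$ with $I\subseteq I_0$; concretely $V^G_B(w)$ takes the form $\CF^G_{P_{I_0}}(M_{I_0}(w\cdot\lambda),v_B^{P_{I_0}})$, where $M_{I_0}(w\cdot\lambda)$ is the generalized parabolic Verma module of highest weight $w\cdot\lambda$ attached to $\mathfrak p_{I_0}$ (which makes sense since $\langle w\cdot\lambda,\alpha^\vee\rangle\ge 0$ for $\alpha\in I_0$) and $v_B^{P_{I_0}}$ is the smooth Steinberg representation of the Levi $L_{P_{I_0}}$. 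The formal ingredients I would use are: exactness of $\CF^G_P(M,-)$ and of $\CF^G_P(-,V)$ in each argument; the transitivity $\CF^G_P(M,\Ind^{L_P}_{L_Q}(V))=\CF^G_Q(M,V)$; and the Orlik--Strauch irreducibility theorem --- $\CF^G_{P_I}(L(\nu),v_{P_J}^{P_I})$ is irreducible when $I=I(\nu)$ and $v_{P_J}^{P_I}$ is a smooth generalized Steinberg representation --- it is precisely here, via \cite[Section 5]{OSt}, that the type and characteristic hypotheses in the footnote enter.

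The heart of the computation is a double reduction. On the smooth side, the Schneider--Stuhler-type resolution (\cite{SS}) of $v_B^{P_{I_0}}$ by the representations obtained from $\mathbf 1$ by parabolic induction within $L_{P_{I_0}}$ from the standard parabolics with Levi $L_{P_K}$, $K\subseteq I_0$, combined with exactness of $\CF^G_{P_{I_0}}(M_{I_0}(w\cdot\lambda),-)$ and the transitivity formula, gives
\[
   [V^G_B(w)]=\sum_{K\subseteq I_0}(-1)^{\lb K\rb}\,[\CF^G_{P_K}(M_{I_0}(w\cdot\lambda),\mathbf 1)].
\]
On the category-$\CO$ side, one decomposes $[M_{I_0}(w\cdot\lambda)]=\sum_v c_{v,w}[L(v\cdot\lambda)]$; every composition factor lies in the parabolic category attached to $\mathfrak p_{I_0}$, so $I_0\subseteq I(v)$, and for each of them the Orlik--Strauch description together with the standard composition series of a parabolically induced trivial representation yields
\[
   [\CF^G_{P_K}(L(v\cdot\lambda),\mathbf 1)]=\sum_{K\subseteq J\subseteq I(v)}[\CF^G_{P_{I(v)}}(L(v\cdot\lambda),v_{P_J}^{P_{I(v)}})].
\]
Substituting, interchanging summations and collapsing the inner sign-sum over $K\subseteq J\cap I_0$, the multiplicity of $\CF^G_{P_I}(L(v\cdot\lambda),v_{P_J}^{P_I})$ in $[V^G_B(w)]$ comes out as a $\BZ$-linear combination of composition multiplicities of generalized parabolic Verma modules; rewriting these through ordinary Kazhdan--Lusztig data --- using $[M^{\mathfrak p_S}(\mu)]=\sum_{w'\in W_S}(-1)^{\ell(w')}[M(w'\cdot\mu)]$ and Möbius inversion over the subsets of $I_0$ --- turns the multiplicity into exactly $\sum_{\supp(w')=J\cap I_0}(-1)^{\ell(w')+\lb J\cap I_0\rb}m(w'w,v)$.

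For the base case $w=e$ one has $\lambda$ dominant, $I_0=\Delta$, $M_\Delta(\lambda)$ the finite-dimensional irreducible of highest weight $\lambda$, and the formula specializes to \cite[Theorem 4.6]{OSch}; the general case then follows by repeating the argument with $M_{I_0}(w\cdot\lambda)$ in place of the finite-dimensional module, or equivalently by restricting to the Levi $L_{P_{I_0}}$ --- where $w\cdot\lambda$ becomes dominant --- and using the compatibility of $\CF$ with parabolic induction. Completeness of the list of Jordan--Hölder factors and non-negativity of the above sums are then automatic: $V^G_B(w)$ is a genuine quotient of $\Ind^G_B((K_{w\cdot\lambda})')=\CF^G_B(M(w\cdot\lambda),\mathbf 1)$, whose composition factors are all of the form $\CF^G_{P_I}(L(v\cdot\lambda),v_{P_J}^{P_I})$, so the Grothendieck-group identity just obtained simultaneously pins down every multiplicity and forces the displayed alternating sums to be $\ge 0$.

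The main obstacle is the combinatorial step: showing that the two nested alternating sums --- one over subsets $K\subseteq I_0$ coming from the Steinberg resolution of $L_{P_{I_0}}$, the other over Weyl group elements coming from the composition series of $M_{I_0}(w\cdot\lambda)$ --- collapse to the precise closed form, with the sign $(-1)^{\ell(w')+\lb J\cap I_0\rb}$ and the exact-support condition $\supp(w')=J\cap I_0$ (rather than, say, a range condition); this hinges on the interplay of partial-support sums, the Bruhat order, and the behaviour of $m(\cdot,v)$ under left multiplication by elements of $W_{I_0}$, together with the fact (used repeatedly) that $\ell(w'w)=\ell(w')+\ell(w)$ for $w'\in W_{I_0}$. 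A second, more technical, difficulty is verifying that the Orlik--Strauch functor really is exact and transitive and produces irreducible objects for all the weights in the linkage class of $w\cdot\lambda$ and all the smooth generalized Steinberg representations that occur --- which is precisely what the characteristic and root-system restrictions in the footnote are there to ensure.
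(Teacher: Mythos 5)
Your opening identification $V^G_B(w)\cong\CF^G_{P_{I_0}}\bigl(M_{I_0}(w\cdot\lambda),v_B^{P_{I_0}}\bigr)$ is false, and the error propagates through the whole computation. Test it at $w=e$ with $\lambda$ dominant: then $I_0=\Delta$, $M_\Delta(\lambda)=L(\lambda)$ is finite-dimensional, and $\CF^G_G(L(\lambda),v_B^G)=L(\lambda)'\otimes v_B^G$ is a \emph{locally algebraic} representation. But $V^G_B(\lambda)$ is a quotient of the full locally analytic principal series $\Ind^G_B(K_\lambda')$, and by \cite[Theorem 4.6]{OSch} it contains non-locally-algebraic Jordan--Hölder factors such as $\CF^G_B(L(w_0\cdot\lambda),\mathbf 1)$ (take $v=w_0$, $J=\emptyset$, multiplicity $m(e,w_0)\neq 0$); so it cannot equal $L(\lambda)'\otimes v_B^G$. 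The object you wrote down is rather of the flavour of the (much smaller) \emph{locally algebraic} Steinberg $v^G_B(w)$. With that wrong identification, your alternating sum decouples: since $M_{I_0}(w\cdot\lambda)$ does not depend on $K$, the inner sum over $K\subseteq J\cap I_0$ collapses immediately to $(1-1)^{\lb J\cap I_0\rb}$ and you get $[M_{I_0}(w\cdot\lambda):L(v\cdot\lambda)]\,\delta_{J\cap I_0,\emptyset}$ --- which is not the theorem's answer (already for $J\cap I_0=\emptyset$ it gives $[M_{I_0}(w\cdot\lambda):L(v\cdot\lambda)]$ rather than $m(w,v)=[M(w\cdot\lambda):L(v\cdot\lambda)]$, and it is zero whenever $J\cap I_0\neq\emptyset$).

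The paper's route is structurally different in exactly the place you need it. It does not identify $V^G_B(w)$ with a single $\CF^G_P$-object; it works directly with Corollary~\ref{relativeresolution}, giving $[V^G_B(w)]=\sum_{K\subseteq I_0}(-1)^{\lb K\rb}[I^G_{P_K}(w)]$ where $I^G_{P_K}(w)=\CF^G_{P_K}(M_K(w\cdot\lambda))$ and, crucially, the category-$\CO$ input $M_K(w\cdot\lambda)$ \emph{varies with} $K$. The citation to \cite{OSch} then supplies $[I^G_{P_K}(w):\CF^G_{P_I}(L(v\cdot\lambda),v^{P_I}_{P_J})]=[M_K(w\cdot\lambda):L(v\cdot\lambda)]$ for $K\subseteq J\cap I_0$ (and $0$ otherwise); expanding $\ch M_K(w\cdot\lambda)=\sum_{w'\in W_K}(-1)^{\ell(w')}\ch M(w'w\cdot\lambda)$ and interchanging sums, the inner sign-sum runs over $\supp(w')\subseteq K\subseteq J\cap I_0$, which is what forces the exact-support condition $\supp(w')=J\cap I_0$. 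Your closing two points --- that the surjection $V^G_B(\lambda)\twoheadrightarrow V^G_B(w)$ yields completeness of the list via the $w=e$ case, and that the alternating sums are therefore nonnegative --- are correct and match the paper, but they cannot rescue the central computation.
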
 

From this we see that the homology of $C_\bullet$ is closely related to the Kazhdan-Lusztig conjecture. Therefore, we compute the Jordan-Hölder factors of the homology of $C_\bullet$ only in concrete examples (cf. Example \ref{exampleCohomology}). In this computations we make use of the fact that the morphism $p_{w',w}:V^G_B(w')\rightarrow V^G_B(w)$ in the differentials of $C_\bullet$ is surjective for $w',w \in W$ with $w' \leq w$ (cf. Lemma \ref{surjection}). \\

The paper is divided into two parts. The first half is devoted the preliminaries that we will use afterwards. In detail, we recall some basics about local cohomology in Section \ref{s:localcohomology}, about a split reductive group $\bG$ over a finite extension of $\BQ_p$ in Section \ref{s:rootdatum}, about  the functor $\CF^G_P$ in Section \ref{s:FGP} and last but not least about $p$-adic period domains in Section \ref{s:perdom}. In 
Section \ref{s:FGP} we also prove Theorem \ref{multiplicitiesintro}. 

In the second half, we first introduce our setup in Section \ref{s:Setup}. 
This includes the period domain $\sF^{\wa}$ inside the complete flag variety 
$\sF$ over $K$ and the line bundle $\CE_\lambda$ on $\sF$ associated to a dominant character $\lambda$ of $\bG$ with respect to the Borel pair $(\bT,\bB)$. 
In the next section, we make some geometric observations for the complement $Y$ of $\sF^{\wa}$ in $\sF^\ad$. In particular, (generalized) Schubert cells and unions of Schubert varieties will appear there. Then, in Section \ref{s:AlgLocCo}, we determine the algebraic local cohomology groups of $\sF$ 
with support in these (locally) closed subsets and coefficients in $\CE_\lambda$. We relate them to analytic local cohomology groups of $\sF^{\rig}$ in Section \ref{s:AnaLocCo}. In Section \ref{s:results}, we use this relation to deduce Theorem \ref{theorem1intro}. For this we use Orlik's fundamental complex (cf. \cite[Section 6.2.2]{CDHN}) and an induced spectral sequence in cohomology. Moreover, we determine the Jordan-Hölder factors of the dual of $H^*(\sF^{\wa}, \CE_\lambda)$ in examples with the help of the computer. 

In the Appendix \ref{s:Appendix} we list the code we use to determine the Jordan-Hölder factors of the terms in the chain complexes in the examples given. 


\subsection{Notations}
 Let $p$ be a prime and $K$ be a finite extension of $\BQ_p$. Further let $L$ be a complete extension of $\BQ_p$ with $K\subset L$. We let $\CO_K$ and $\CO_L$, respectively, be the ring of integers of $K$ and $L$, respectively. Moreover, let $\lb \text{\,\,\,} \rb$ be the absolute value of $K$ and $L$, respectively, such that $\lb p \rb = p^{-1}$. \\


We use bold letters for algebraic group schemes over $K$, e.g. $\bG$, $\bB$. The corresponding groups of $K$-valued points are denoted by normal letters, e.g.  $G$, $B$ and the associated Lie algebras by Gothic letters, e.g. $\fkg$, $\fkb$. We write $U(\fkh)$ for the universal enveloping algebra of a Lie algebra $\fkh$ over $K$. \\

We consider $L$ as the field of coefficients. The base change of a $K$-vector 
space or a scheme over $K$ to $L$ is indicated by $L$ in the subscript, e.g. $\mathfrak{g}_L=\mathfrak{g}\otimes_K L$. 
We make an exception when considering a universal enveloping algebra, i.e. we will write  $U(\fkh)$ for $U(\fkh)_L \cong U(\fkh_L)$. \\

We denote by $\Rep^{\infty,\, \mathrm{adm}}_L(H)$ the category of smooth admissible representations of a locally profinite group $H$ on $L$-vector spaces, as in \cite[Section 2.1]{BH2}. \\

For a locally convex $L$-vector space $V$, we denote by $V'$ the strong dual, i.e. the $L$- vector space of continuous linear forms equipped with
the strong topology of bounded convergence. \\ 

For an algebraic variety $X$ over $K$, we write $X^\mathrm{rig}$ for the rigid analytic variety and by $X^\ad$ the adic space attached to $X$, respectively. If $\CE$ is a sheaf on such a variety $X$, we also write $\CE$ for the associated sheaf on $X^{\rig}$, $X^{\ad}$ and its restriction to any subspace, respectively.

\subsection{Acknowledgements} This paper is part of a PhD thesis of the Bergische Universität Wuppertal. My biggest thanks therefore goes to Sascha Orlik. He gave me the opportunity for this research project. It is a multifaceted topic, which somehow miraculously connects areas of mathematics that at first glance do not seem to be in contact with each other - wonderful. Moreover, he actively mentored me throughout the process. \\

I would also like to thank Roland Huber, Thomas Hudson, Henry July, Georg Linden and Sean Tilson for all the helpful mathematical conversations and the support throughout the process. \\

This research was conducted in the framework of the research training group GRK 2240: Algebro-Geometric Methods in Algebra, Arithmetic and Topology, which is funded by the DFG.
\section{Preliminaries}\label{s:preliminaries} 

\subsection{Local Cohomology}\label{s:localcohomology} 
In this subsection, we recall some facts about the local cohomology of topological spaces. For that purpose, we follow the theory described in \cite[Section 1]{H}. Let $X$ be a topological space and $Z \subset X$ a locally closed subset, i.e. there is an open subset $V$ such that  $Z$ is closed in $V$. Further, let $\CE \in \mathbf{Ab}(X)$ be an abelian sheaf on $X$. Then, $H^*_Z(\sF,\CE)$ be the right derived functors of $$\Gamma_Z(X,\CE):=\Ker\big(\Gamma(V,\CE)\rightarrow \Gamma(V\backslash Z,\CE)\big).$$ An essential property that we take advantage of is the following.


\begin{proposition}\label{excision}\cite[Proposition 1.3]{H}
    Let $Z$ be locally closed in $X$, and let $V$ be open in $X$ and such that $Z \subseteq V \subset X$. Then, for any $\CE \in \mathbf{Ab}(X)$, 
    $$ H^i_Z(X,\CE) \cong H^i_Z(V,\CE|_V).$$
\end{proposition}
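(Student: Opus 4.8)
The plan is to reduce the statement to the classical excision property for local cohomology, which is purely sheaf-theoretic and does not depend on any particular geometry. Recall that $H^i_Z(X,\CE)$ is defined via the derived functors of $\Gamma_Z(X,-)$, where for $Z$ closed in an open $V\subseteq X$ one sets $\Gamma_Z(X,\CE)=\Ker(\Gamma(V,\CE)\to\Gamma(V\setminus Z,\CE))$. Since the right-hand side only involves data on $V$, the functor $\Gamma_Z(X,-)$ visibly factors through restriction to $V$: that is, $\Gamma_Z(X,\CE)=\Gamma_Z(V,\CE|_V)$ on the level of the functors themselves. The content of the proposition is then that this equality of functors passes to derived functors, i.e.\ that it suffices to compute the derived functors after restricting to $V$.

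First I would make precise that $\Gamma_Z(X,-)$ and $\Gamma_Z(V,(-)|_V)$ agree as functors $\mathbf{Ab}(X)\to\mathbf{Ab}$, using the definition directly. Next, since $\mathbf{Ab}(X)$ and $\mathbf{Ab}(V)$ both have enough injectives, the right derived functors of $\Gamma_Z(X,-)$ may be computed using an injective resolution $\CE\to\CI^\bullet$ in $\mathbf{Ab}(X)$. The key input is that the restriction functor $(-)|_V:\mathbf{Ab}(X)\to\mathbf{Ab}(V)$ is exact and sends injective sheaves on $X$ to injective sheaves on $V$ (restriction to an open subset has an exact left adjoint, namely extension by zero $j_!$, so it preserves injectives). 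Therefore $\CE|_V\to\CI^\bullet|_V$ is an injective resolution of $\CE|_V$ in $\mathbf{Ab}(V)$, and
\begin{equation*}
   H^i_Z(X,\CE)=H^i\bigl(\Gamma_Z(X,\CI^\bullet)\bigr)=H^i\bigl(\Gamma_Z(V,\CI^\bullet|_V)\bigr)=H^i_Z(V,\CE|_V),
\end{equation*}
where the middle equality is the functorial identity applied termwise and the last is the definition of $H^i_Z(V,-)$ computed via the chosen injective resolution.

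The only genuine point requiring care — and the one I would expect a reader to want spelled out — is that restriction to the open subset $V$ preserves injectivity; everything else is formal. This is of course standard (it is essentially how local cohomology excision is proved in Hartshorne's \emph{Local Cohomology} notes, which the paper already cites as \cite{H}), so in the actual write-up it would be legitimate to simply invoke \cite[Proposition 1.3]{H} as the excerpt already does, rather than reproducing the argument. If one wanted a self-contained proof, the sketch above is complete; there is no serious obstacle, merely the bookkeeping of checking that the functorial identity $\Gamma_Z(X,-)=\Gamma_Z(V,(-)|_V)$ holds on the nose from the definition and that the chosen resolution restricts to an injective resolution on $V$.
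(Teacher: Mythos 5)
Your proof is correct and is essentially the argument in Hartshorne's \emph{Local Cohomology} notes; the paper itself simply cites \cite[Proposition~1.3]{H} without reproducing a proof, so there is nothing in the paper to compare against. The one point worth being slightly more careful about is that the hypothesis only requires $Z\subseteq V$ open, not $Z$ closed in $V$; however, since $Z$ is closed in some open $V_0\subseteq V$, the functorial identity $\Gamma_Z(X,-)=\Gamma_Z(V,(-)|_V)$ still holds on the nose (both sides being $\Ker(\Gamma(V_0,-)\to\Gamma(V_0\setminus Z,-))$), and the rest of your argument, via exactness of restriction and preservation of injectives, goes through unchanged.
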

For two closed subsets $Z_1,Z_2 \subset X$ with $Z_1 \subset Z_2$, we let $$\Gamma_{Z_1/ Z_2}(X, \CE):=\Gamma_{Z_1}(X, \CE)/\Gamma_{Z_2}(X, \CE).$$ 
Then, $H^*_{Z_1/ Z_2}(X, \CE)$ denotes the right derived functor of $\Gamma_{Z_1/ Z_2}(X, -)$ as defined in \cite[Section 7, p. 349/350]{K}. It comes with the following property. 

\begin{lemma}\label{locrel}\cite[Lemma 7.7]{K}
    Let $Z_1 \supset Z_2$ be two closed subsets of a topological space $X$. Let $\CE$ be any abelian sheaf on $X$. Then, there is a natural isomorphism 
    $$ H^i_{Z_1/Z_2}(X,\CE) \longrightarrow H^i_{Z_1\backslash Z_2}(X\backslash Z_2, \CE)$$
    for all integers $i$.  
\end{lemma}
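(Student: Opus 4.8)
The plan is to prove the statement by comparing both sides through a single injective resolution of $\CE$; the key point is that the natural comparison map is already an isomorphism on the level of flasque sheaves.

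First I would record the natural map in degree zero. For the closed pair $Z_2 \subset Z_1 \subset X$, restriction along the open immersion $X\backslash Z_2 \incl X$ carries a section of $\CE$ supported on $Z_1$ to a section of $\CE|_{X\backslash Z_2}$ supported on the closed subset $Z_1\backslash Z_2$ of $X\backslash Z_2$; the kernel of this map is precisely $\Gamma_{Z_2}(X,\CE)$, so one obtains a natural injection $\Gamma_{Z_1/Z_2}(X,\CE)\hookrightarrow \Gamma_{Z_1\backslash Z_2}(X\backslash Z_2,\CE)$. In general this is not surjective, which is exactly why one has to derive.

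Second --- the crux --- I would show that for a flasque sheaf $\CF$ the sequence
$$0\longrightarrow \Gamma_{Z_2}(X,\CF)\longrightarrow \Gamma_{Z_1}(X,\CF)\longrightarrow \Gamma_{Z_1\backslash Z_2}(X\backslash Z_2,\CF)\longrightarrow 0$$
is exact. Left exactness is the kernel computation from the first step. For surjectivity, take $s\in\Gamma(X\backslash Z_2,\CF)$ with $s|_{X\backslash Z_1}=0$; flasqueness of $\CF$ furnishes an extension $\tilde s\in\Gamma(X,\CF)$ with $\tilde s|_{X\backslash Z_2}=s$, and since $X\backslash Z_1\subseteq X\backslash Z_2$ we get $\tilde s|_{X\backslash Z_1}=s|_{X\backslash Z_1}=0$, so $\supp(\tilde s)\subseteq Z_1$ and $\tilde s\in\Gamma_{Z_1}(X,\CF)$ lifts $s$. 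Thus for flasque $\CF$ the natural map $\Gamma_{Z_1/Z_2}(X,\CF)\to\Gamma_{Z_1\backslash Z_2}(X\backslash Z_2,\CF)$ is an isomorphism. Then I would fix an injective resolution $\CE\to\CI^\bullet$: each $\CI^j$ is flasque, and each restriction $\CI^j|_{X\backslash Z_2}$ is again injective --- hence flasque --- on $X\backslash Z_2$, since extension-by-zero is an exact left adjoint to restriction along an open immersion. As flasque sheaves are acyclic for sections with support in an arbitrary locally closed subset (cf. \cite[Section 1]{H}), $\CI^\bullet|_{X\backslash Z_2}$ is a $\Gamma_{Z_1\backslash Z_2}(X\backslash Z_2,-)$-acyclic resolution of $\CE|_{X\backslash Z_2}$. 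Applying the second step termwise gives an isomorphism of complexes $\Gamma_{Z_1/Z_2}(X,\CI^\bullet)\cong\Gamma_{Z_1\backslash Z_2}(X\backslash Z_2,\CI^\bullet)$, whose cohomology yields $H^i_{Z_1/Z_2}(X,\CE)\cong H^i_{Z_1\backslash Z_2}(X\backslash Z_2,\CE)$ for all $i$; naturality in $\CE$ follows from functoriality of injective resolutions up to chain homotopy.

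I expect the main obstacle to be the acyclicity bookkeeping --- checking that $H^*_{Z_1\backslash Z_2}(X\backslash Z_2,-)$ may indeed be computed from the restricted flasque resolution and that the comparison map obtained this way agrees with the "natural" one pinned down in degree zero. An alternative route that sidesteps this is to produce the two long exact sequences --- the one arising from the short exact sequence $0\to\Gamma_{Z_2}\to\Gamma_{Z_1}\to\Gamma_{Z_1/Z_2}\to 0$ of functors, and the excision long exact sequence for the pair $(Z_1,Z_2)$ --- verify their compatibility through the degree-zero map, and finish by the five lemma together with induction on $i$.
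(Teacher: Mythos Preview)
The paper does not give its own proof of this lemma; it is simply quoted from Kempf \cite[Lemma 7.7]{K}. Your argument is correct and is essentially the standard one (and the one Kempf gives): the key computation that $\Gamma_{Z_1/Z_2}(X,\CF)\to\Gamma_{Z_1\backslash Z_2}(X\backslash Z_2,\CF)$ is an isomorphism for flasque $\CF$, combined with the fact that restriction to an open preserves injectivity, lets you compare the two derived functors termwise on an injective resolution.
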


Next, we state a rather technical result which will be helpful afterwards. It seems somehow standard as it is for example used in \cite{MR} and \cite[Section 2.1]{FKT}, but we could not find a precise statement fitting our purposes.
    
\begin{lemma}\label{cousinspectral}
Let $X \supset Z_0 \supset Z_1 \supset \ldots  \supset Z_n$ be a filtration of $X$ by closed subsets and $\CE$ an abelian sheaf on $X$. Then, there is a 
spectral sequence 
$$E_1^{pq}= H^{p+q}_{Z_p/Z_{p+1}}(X, \CE)\Rightarrow H^{p+q}_{Z_0}(X,\CE),$$
where the morphisms on the $E_1$-page are the natural ones. 
\end{lemma}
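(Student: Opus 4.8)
The plan is to construct this spectral sequence as the one associated to a filtered complex, obtained by applying the functor $\Gamma_{Z_0}(X,-)$ (or rather its derived version) to an injective resolution of $\CE$ and exploiting the filtration $Z_0 \supset Z_1 \supset \cdots \supset Z_n$. Concretely, first I would choose an injective resolution $\CE \to I^\bullet$ in $\mathbf{Ab}(X)$. For each $p$ the subsheaf-of-sections functors give a decreasing filtration on the complex $\Gamma_{Z_0}(X, I^\bullet)$ by the subcomplexes $\Gamma_{Z_p}(X, I^\bullet)$ (using that $Z_0 \supseteq Z_p$, these are genuinely subcomplexes, and they are decreasing since $Z_p \supseteq Z_{p+1}$); extend the filtration by setting $Z_p = Z_n$ for $p \geq n$ and $Z_p = Z_0$ for $p \leq 0$ so it is exhaustive and bounded. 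The associated graded pieces are $\Gamma_{Z_p/Z_{p+1}}(X, I^\bullet)$ by the very definition of $\Gamma_{Z_1/Z_2}$ recalled before Lemma \ref{locrel}.

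The second step is to identify the cohomology of these graded pieces. Since $I^\bullet$ is a complex of injective abelian sheaves, I would check that each $I^j$ is acyclic for all the functors $\Gamma_{Z_p}(X,-)$ and $\Gamma_{Z_p/Z_{p+1}}(X,-)$ — this is standard: injectives are flabby, flabby sheaves are $\Gamma_Z$-acyclic for any locally closed $Z$, and a short exact sequence of flabby sheaves stays exact under $\Gamma_{Z_p}(X,-)$, so the quotient functor $\Gamma_{Z_p/Z_{p+1}}(X,-)$ is computed correctly on injectives. Hence $H^{p+q}$ of the complex $\Gamma_{Z_p/Z_{p+1}}(X, I^\bullet)$ is exactly $H^{p+q}_{Z_p/Z_{p+1}}(X,\CE)$, which by definition is the right derived functor as in \cite[Section 7]{K}. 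This gives the $E_1$-page as claimed, and the standard machinery of the spectral sequence of a filtered complex gives convergence to $H^{p+q}$ of the total complex $\Gamma_{Z_0}(X, I^\bullet)$, which is $H^{p+q}_{Z_0}(X,\CE)$.

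The third step is bookkeeping about the differentials and indexing. The differential $d_1 \colon E_1^{pq} \to E_1^{p+1,q}$ in the spectral sequence of a filtered complex is the connecting map of the short exact sequence of graded pieces $0 \to \Gamma_{Z_{p+1}/Z_{p+2}}(X, I^\bullet) \to \Gamma_{Z_p/Z_{p+2}}(X, I^\bullet) \to \Gamma_{Z_p/Z_{p+1}}(X, I^\bullet) \to 0$, which on cohomology is precisely the natural long exact sequence relating $H^*_{Z_p/Z_{p+1}}$, $H^*_{Z_p/Z_{p+2}}$, $H^*_{Z_{p+1}/Z_{p+2}}$ (the analogue of the long exact sequence of a triple); so the $d_1$ maps are "the natural ones," as asserted. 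I should also remark that the assertion only concerns $Z_0, \ldots, Z_n$, so the ambient $X$ plays no essential role beyond hosting the sheaves; alternatively one may apply Lemma \ref{locrel} to reinterpret the terms on $X \setminus Z_{p+1}$, but this is not needed for the statement.

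The main obstacle, such as it is, is not conceptual but a matter of being careful with the foundational setup: making sure that "right derived functor of $\Gamma_{Z_1/Z_2}(X,-)$" in the sense of \cite[Section 7]{K} really is computed by an injective (equivalently flabby) resolution of $\CE$, i.e. that the quotient presheaf functor behaves well and that flabby sheaves are acyclic for it. Once that compatibility is in place — and it is essentially the content of the exactness of $\Gamma_{Z_p}(X,-)$ on short exact sequences of flabby sheaves — the spectral sequence is just the standard one of a bounded filtered complex, and there is nothing further to prove. This is why the result "seems somehow standard"; the only real work is pinning down that the derived functors of \cite{K} agree with the flabby-resolution computation, which I would dispatch in a sentence or two citing flabbiness of injectives and the usual effaceability argument.
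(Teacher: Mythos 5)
Your argument is correct and is essentially the paper's proof: both filter $\Gamma_{Z_0}(X,-)$ applied to a flabby (injective or Godement) resolution by the subcomplexes $\Gamma_{Z_p}(X,-)$, identify the graded pieces with $\Gamma_{Z_p/Z_{p+1}}(X,-)$, and read off the $E_1$-page and $d_1$ as the connecting maps. The only cosmetic difference is that the paper converts the filtered complex into an exact sequence of shifted complexes and then invokes the double-complex spectral sequence, whereas you invoke the filtered-complex spectral sequence directly --- two presentations of the same device.
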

\begin{proof}
We associate to $\CE$ a complex of flasque sheaves $\CG^\bullet(\CE)$ on $X$ together with an augmentation map $\CE \rightarrow \CG^0(\CE)$ such that 
$\CE \rightarrow \CG^\bullet(\CE)$ is a resolution of $\CE$ (cf. \cite{G}). As Kempf pointed out in \cite[Section 7, p. 350]{K}, we can use this resolution to compute the local cohomology groups in question. \\

The given filtration on $X$ naturally defines a filtration of complexes  
$$\Gamma_{Z_0}(X,\CG^\bullet(\CE)) \supset \Gamma_{Z_1}(X,\CG^\bullet(\CE)) \supset \ldots \supset \Gamma_{Z_n}(X,\CG^\bullet(\CE)) $$
from which we form the following quotient complexes 
\begin{equation}\label{quotient}
0 \rightarrow \Gamma_{Z_{j+1}}(X,\CG^\bullet(\CE)) \rightarrow \Gamma_{Z_j}(X,\CG^\bullet(\CE)) \rightarrow K_j^{\bullet} \rightarrow 0.
\end{equation}
Notice that by \cite[Section 7]{K}, one has $K_j^\bullet=\Gamma_{Z_j/Z_{j+1}}(X, \CG^\bullet(\CE))$ such that $$H^q(K_j^\bullet)=H^q_{Z_j/Z_{j+1}}(X, \CE).$$
Then, by the procedure explained in \cite[Section 3]{Ri}, we get an exact complex of complexes 
\begin{equation}\label{dcoffc}
0 \rightarrow \Gamma_{Z_0}(\sF,\CG^\bullet(\CF)) \rightarrow \tilde{K}_0^\bullet \rightarrow \tilde{K}^\bullet_1[1] \rightarrow \tilde{K}^\bullet_2[2] \rightarrow \ldots \rightarrow \tilde{K}^\bullet_n[n] \rightarrow 0
\end{equation}
where $\tilde{K}^\bullet_j$ is a complex quasi-isomorphic to $K^\bullet_j$. Moreover, the morphisms $\tilde{K}^\bullet_j \rightarrow \tilde{K}^\bullet_{j+1}[1]$ induces the natural homomorphisms
$H^q_{Z_j/Z_{j+1}}(X,\CE)\rightarrow H^{q+1}_{Z_{j+1}/Z_{j+2}}(X,\CE) $ which are given by the connecting homomorphisms coming from the long exact sequence in cohomology of (cf. (\ref{quotient})) followed by the quotient maps.\\

Thus, (\ref{dcoffc}) yields a double complex 
$$C^{\bullet, \bullet}:\tilde{K}_0^\bullet \rightarrow \tilde{K}^\bullet_1[1] \rightarrow \tilde{K}^\bullet_2[2] \rightarrow \ldots \rightarrow \tilde{K}^\bullet_n[n].$$ 
Then, by combining the properties mentioned above with the usual theory of spectral sequences associated to a double complex, we obtain the result we were looking for. 
\end{proof}

\subsection{Split reductive groups}\label{s:rootdatum} 
We recall the basic facts that come along with a split connected reductive group which will be essential throughout the whole thesis. \\


Let $K$ be a finite extension of $\BQ_p$ and $\bG$ a split connected reductive group over $K$. 
Any split maximal torus $\bT \subset \bG$ of rank $d$ defines the \textit{split pair} $(\bG,\bT)$ \textit{of rank $d$} to which we associate the root datum 
$$(X^*(\bT),\Phi(\bG,\bT), X_*(\bT),\Phi^\vee(\bG,\bT))$$ with the natural pairing 
\begin{align}
   \langle \text{ , } \rangle:X^*(\bT)_\BQ \times X_*(\bT)_\BQ &\longrightarrow \mathbb{\BQ}  \label{rootpairing}
\end{align}
(cf. \cite[Part II, 1.13]{J}). Furthermore, there exists an \textit{invariant inner product} on $\bG$ due to Totaro, abbreviated by $\inva$ (cf. \cite[Section 5.2.1]{CDHN}). That means 
we have a non-degenerate positive definite symmetric bilinear form $(\text{ , })$ on $X_*(\bT)_\BQ$ for all maximal tori $\bT$ (defined over $\overline{K}$) of $\bG$ such that 
the maps 
\begin{equation*} 
   X_*(\bT)_{\BQ} \rightarrow X_*(g \bT g^{-1})_{\BQ}, \, 
   X_*(\bT)_{\BQ} \rightarrow X_*(\tau \bT \tau^{-1})_{\BQ},
\end{equation*}
are isometries for all $g \in \bG(\overline{K})$ and $\tau \in \Gal(\overline{K}/K)$. A chosen $\inva$ on $\bG$, for any split pair $(\bG,\bT)$, together with the natural pairing (\ref{rootpairing}) induces an isomorphism of $\BQ$-vector spaces 
\begin{align*}
   X^*(\bT)_\BQ &\longrightarrow X_*(\bT)_\BQ,\\
   \chi &\mapsto \chi^*, 
\end{align*}
such that 
\begin{equation}\label{products} 
   (\chi^*, \mu)=\langle \chi, \mu \rangle
\end{equation} 
for all $\mu \in X_*(\bT)$. Furthermore, for $\alpha \in \Delta$ we have (cf. \cite[Ch. VI, §1.1., Lemma 2]{B})
\begin{equation}\label{dual}
   \alpha^{\vee}=\frac{2\alpha^*}{(\alpha^*,\alpha^*)}.
\end{equation}

For the rest of the subsection, we fix a split maximal torus $\bT$ of $\bG$ and an 
$\inva$ $(\text{ , })$  on $\bG$. Using $\Phi:=\Phi(\bG,\bT)$ for the root system and fixing a Borel subgroup $\bB$ inside $\bG$ containing $\bT$, we get a set of corresponding positive 
roots $\Phi^+ \subset \Phi$ and simple roots $\Delta \subset \Phi^+$ as explained in \cite[Section 16.3.1]{Sp}. We call such a tuple $(\bT, \bB)$ a \textit{Borel pair}. \\

There is the following relation between simple roots and coroots. 
\begin{lemma}\label{rootrel}\cite[Lemma 8.2.7]{Sp} For $\alpha, \beta \in \Delta$, $\alpha \neq \beta$, we have  $\langle \alpha, \beta^\vee \rangle \leq 0$.
\end{lemma}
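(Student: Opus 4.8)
The plan is to reduce the sign assertion to positive-definiteness of the chosen invariant inner product and then to invoke the elementary combinatorics of a base of a root system. First I would rewrite the pairing in terms of the inner product: combining (\ref{dual}) with (\ref{products}) gives $\langle\alpha,\beta^\vee\rangle = 2(\alpha^*,\beta^*)/(\beta^*,\beta^*)$, so, since $(\beta^*,\beta^*)>0$, the inequality $\langle\alpha,\beta^\vee\rangle\le 0$ is equivalent to $(\alpha^*,\beta^*)\le 0$. Transporting $(\text{ , })$ from $X_*(\bT)_\BQ$ to $X^*(\bT)_\BQ$ along $\chi\mapsto\chi^*$, this is exactly the statement that two distinct simple roots enclose a non-acute angle, a standard fact about bases of root systems.

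To prove that, I would argue by contradiction using the Weyl reflection $s_\beta$. Suppose $c := \langle\alpha,\beta^\vee\rangle > 0$. Since $s_\beta \in W$ permutes $\Phi$, the vector $s_\beta(\alpha) = \alpha - c\beta$ is again a root. As $\alpha$ and $\beta$ are distinct members of the base $\Delta$ (cf. \cite[Section 16.3.1]{Sp}), they are linearly independent, so in the expansion of $s_\beta(\alpha)$ over $\Delta$ the coefficient of $\alpha$ equals $1$ and the coefficient of $\beta$ equals $-c$. A root with a strictly positive coefficient relative to $\Delta$ must be a positive root, hence have all of its $\Delta$-coefficients $\ge 0$; but the coefficient of $\beta$ is $-c < 0$, a contradiction. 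Therefore $c \le 0$.

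I do not expect a genuine obstacle here. The only input beyond linear algebra is the structural fact that every element of $\Phi$ is a $\BZ_{\geq 0}$- or a $\BZ_{\leq 0}$-combination of the simple roots, which is part of the construction of $\Delta$ recalled in \cite[Section 16.3.1]{Sp} and may be used freely; the relations (\ref{products}) and (\ref{dual}) are precisely what is needed to pass between the pairing and the inner product. If one preferred to bypass even the combinatorial fact, the same conclusion follows from the classification of rank-two root systems applied to the subsystem generated by $\alpha$ and $\beta$, but the reflection argument above is the most economical route.
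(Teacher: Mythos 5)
Your proof is correct. The paper does not prove this lemma but simply cites \cite[Lemma 8.2.7]{Sp}; the reflection argument you give -- that $s_\beta(\alpha)=\alpha-\langle\alpha,\beta^\vee\rangle\beta\in\Phi$ would have mixed-sign $\Delta$-coefficients (namely $+1$ on $\alpha$ and $-\langle\alpha,\beta^\vee\rangle<0$ on $\beta$) if $\langle\alpha,\beta^\vee\rangle>0$, contradicting the sign-separation property of a base -- is the standard argument one finds in such references, so the approach is in effect the same. One small remark: your opening paragraph, translating $\langle\alpha,\beta^\vee\rangle$ into the invariant inner product via (\ref{dual}) and (\ref{products}), is not actually used afterwards; the reflection argument works directly with the pairing and needs neither $(\,,\,)$ nor positive-definiteness, only that $W$ permutes $\Phi$ and that every root is a purely non-negative or purely non-positive $\BZ$-combination of simple roots. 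You could therefore drop that paragraph entirely without loss.
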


Let $W=N_\bG(\bT)/\bT$ be the Weyl group of $\bG$ with longest element $w_0$ with respect to $\bB$. The natural action of $W$ on $\bT$ by conjugation induces an action on $X^*(\bT)$. We denote by $S:=\{s_\alpha\}_{\alpha \in \Delta} \subset W$, the \textit{simple reflections}, a set of generators of $W$ such that 
\begin{equation*}
   s_\alpha.\lambda=\lambda - \langle \lambda, \alpha^\vee \rangle \alpha 
\end{equation*}
for $\lambda \in X^*(\bT)_\BQ$ and $\alpha^\vee \in \Phi^\vee(\bG,\bT)$. 
For $w \in W$, we consider also the \textit{dot action} given by 
\begin{equation*} \label{dotaction} w \cdot \lambda=w.(\lambda+\rho)-\rho
\end{equation*}
where $\rho:=\frac{1}{2}\sum_{\alpha \in \Phi^+} \alpha $. The support $\supp(w)$ of an element $w \in W$ is the set of simple reflections contained in a (thus in any) reduced expression of $w$. \\

Each $I \subset \Delta$ defines a root system $\Phi_\RI \subset \Phi$ with positive 
roots $\Phi_\RI^+ \subset \Phi_\RI$ and a Weyl group $W_\RI \subset W$ generated by the $\{s_\alpha\}_{\alpha \in I}$ (cf. \cite[Part II, 1.7]{J}). 
We denote by $W^I$ the right \textit{Kostant representatives}, i.e.  the set of minimal length right coset representatives in $W_I\backslash W$. It can be described as (cf. \cite[(2.2)]{B1})
$$W^I=\{w \in W \mid l(s_\alpha w)>l(w) \text{ for all } \alpha \in I\}.$$ 
\begin{lemma}\label{Kostant}\cite[Section 0.3 (4)]{H2}
Let $w \in W$ and $I \subset \Delta$. Then, $w \in W^I$ if and only if $w^{-1}\alpha \in \Phi^+$  for all $\alpha \in \Phi_I^+$. 
\end{lemma}
For $w \in W$, we let 
\begin{equation}\label{maximalI}
   I(w):=\{\alpha \in \Delta \mid l(s_{\alpha }w)>l(w)\} \subset \Delta
\end{equation}
be the unique maximal subset such that $w \in W^{I(w)}$ (cf. \cite[p. 663]{OSt}). We have an inclusion preserving bijection (cf. \cite[Proposition 12.2]{MT}) 
\begin{align}\label{parabolicbijection}
    \CP(\Delta) &\longleftrightarrow   \{\text{parabolic subgroups  } \bP\supset \bB  \} \\
    I &\mapsto \bB W_I\bB =:\bP_I \nonumber
\end{align} 
where the subgroups $\bP_I$ denote the \textit{standard parabolic subgroups} of $\bG$ with respect to $\bB$, e.g. $\bP_\emptyset=\bB$, $\bP_\Delta=\bG$. Furthermore, each $\bP:=\bP_I$ admits a \textit{Levi decomposition} $\bP=\mathbf{L_P}\cdot \mathbf{U_P}$ (cf. \cite[Part II,1.8]{J}). 
Here, $\mathbf{L_P}$ denotes the standard \textit{Levi factor} containing $\bT$ and $ \mathbf{U_P}$ the \textit{unipotent radical} of $\bP$. Additionally, we let $\mathbf{U}^-_{\bP}$ be its \textit{opposite unipotent radical}. 
\begin{remark}\label{intersectionparabolic}
   Let $I,J \subset \Delta$. Since $W_I \cap W_J=W_{I \cap J}$, one sees that $\bP_I \cap \bP_J=\bP_{I \cap J}$. 
\end{remark}

We define 
\begin{equation}\label{dominantweights}
   X^*(\bT)_I^+:=\Big\{\lambda \in X^*(\bT)\, \Big\vert \, \langle \lambda, \alpha^{\vee} \rangle \geq 0 \text{ for all } \alpha \in I \Big\}
\end{equation}
for $I \subset \Delta$  to be the set of \textit{$\bL_{\bP_{I}}$-dominant weights}. For $I=\Delta$, we just write $ X^*(\bT)^+$ and call it the set of \textit{dominant weights}. 

\begin{proposition}\label{parabolicweight}\cite[p. 502]{Le1}
   Let $\lambda \in X^*(\bT)^+$, $w \in W$ and  $I \subset \Delta$. If $w \in W^I$, then $w\cdot\lambda \in X^*(\bT)_I^+$.
\end{proposition}

\begin{remark}
   If $\lambda$ is additionally regular, i.e. $\langle \lambda +\rho, \alpha^\vee \rangle \neq 0$ for all $\alpha \in \Phi$ (cf. \cite[Section 1.8]{H2}), then also the converse holds (cf. \cite[Proposition 2.4]{B1}).
\end{remark}

The derived group $\bG_{\der}$ is a connected semi-simple subgroup of $\bG$ with maximal torus 
\begin{equation}\label{derivedtorus}
    \bT_\der:=\langle \Im(\alpha^\vee) \mid \alpha \in \Phi \rangle \subset \bT 
\end{equation}
(cf. \cite[Proposition 8.1.8/ Section 16.2.5]{Sp}). Moreover, $\bT_\der$ splits by \cite[Proposition 8.2.(c)]{Bor}. The natural map 
\begin{align} 
           X^*(\bT)&\longrightarrow X^*(\bT_\der) \label{torusmap}\\ 
            \lambda &\mapsto \lambda\circ \iota, \nonumber
\end{align}
induced by the inclusion $\iota$ (\ref{derivedtorus}), is injective after restriction to $\Phi$ (cf. \cite[Section 8.1, p. 135]{Sp}). Thus, we identify $\Phi$ with its image. Therefore, the split pair $(\bG_\der, \bT_\der)$ has the root datum 
$$(X^*(\bT_\der), \Phi, X_*(\bT_\der), \Phi^\vee)$$ (cf. \cite[Corollary 8.1.9]{Sp}) and we denote the associated pairing by $\langle \text{ , } \rangle_\der$.   
By semi-simplicity of $\bG_\der$, the simple roots $\Delta$ form a basis of $X^*(\bT_\der)_\BQ$ (cf. \cite[Part II, 1.6]{J}). Thus, we define
the dual basis 
\begin{equation}\label{dualbase}
   \{\varpi_\alpha \mid \alpha \in \Delta \} \subset X_*(\bT_\der)_\BQ,
\end{equation} i.e. $\langle \beta, \varpi_\alpha \rangle_\der = \delta_{\alpha,\beta}$ for all $\alpha, \beta \in \Delta$. Naturally, $\{\varpi_\alpha \}_{\alpha \in \Delta} \subset X_*(\bT)_\BQ$. 
By duality, the corresponding set of coroots $\{\alpha^\vee \mid \alpha \in \Delta\}$ forms a basis of $X_*(\bT_\der)_\BQ$ (cf. \cite[Part II, 1.6]{J}) and we analogously define the dual basis 
$$\{\check{\varpi}_\alpha \mid \alpha \in \Delta \} \subset X^*(\bT_\der)_\BQ$$
whose elements are known as \textit{fundamental weights}. A helpful observation for later is the following. 

\begin{lemma}\label{equivalentcond}
Let $\mu = \sum_{\alpha \in \Delta} n_\alpha \alpha^\vee \in X_*(\bT_\der)_\BQ \subset X_*(\bT)_\BQ$ with $n_\alpha \in \BQ$, and $\beta \in \Delta$. Then, $$(\mu, \varpi_\beta)>0 \text{ if and only if } \langle \check{\varpi}_\beta, \mu \rangle_\der >0.$$ 
\end{lemma}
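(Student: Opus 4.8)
The plan is to reduce both strict inequalities in the statement to the single arithmetic condition $n_\beta > 0$, by evaluating each side in the coroot basis $\{\alpha^\vee\}_{\alpha \in \Delta}$ of $X_*(\bT_\der)_\BQ$; the hypothesis $\mu = \sum_{\alpha} n_\alpha \alpha^\vee$ is precisely the expansion of $\mu$ in this basis.

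For the right-hand side, I would use that $\{\check\varpi_\alpha\}_{\alpha \in \Delta}$ is, by construction, the basis of $X^*(\bT_\der)_\BQ$ dual to $\{\alpha^\vee\}_{\alpha \in \Delta}$ with respect to $\langle\,,\,\rangle_\der$. Bilinearity then gives
$$\langle \check\varpi_\beta, \mu \rangle_\der = \sum_{\alpha \in \Delta} n_\alpha \langle \check\varpi_\beta, \alpha^\vee \rangle_\der = n_\beta,$$
so $\langle \check\varpi_\beta, \mu \rangle_\der > 0$ if and only if $n_\beta > 0$.

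For the left-hand side, it suffices by bilinearity to compute $(\alpha^\vee, \varpi_\beta)$ for each $\alpha \in \Delta$. Writing $\alpha^\vee = 2\alpha^*/(\alpha^*, \alpha^*)$ as in (\ref{dual}) and then applying (\ref{products}) (extended $\BQ$-linearly in the second variable), I get
$$(\alpha^\vee, \varpi_\beta) = \frac{2}{(\alpha^*, \alpha^*)}\,(\alpha^*, \varpi_\beta) = \frac{2}{(\alpha^*, \alpha^*)}\,\langle \alpha, \varpi_\beta \rangle = \frac{2}{(\alpha^*, \alpha^*)}\,\delta_{\alpha\beta},$$
where the last equality uses the identification of $\Phi$ with its image in $X^*(\bT_\der)$ under (\ref{torusmap}), the compatibility of the natural pairing on $(X^*(\bT)_\BQ, X_*(\bT)_\BQ)$ with $\langle\,,\,\rangle_\der$ along the inclusion $\bT_\der \hookrightarrow \bT$, and the defining property $\langle \gamma, \varpi_\beta \rangle_\der = \delta_{\gamma\beta}$ of $\varpi_\beta$. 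Summing over $\alpha$ yields $(\mu, \varpi_\beta) = 2 n_\beta / (\beta^*, \beta^*)$, and since $(\,,\,)$ is positive definite we have $(\beta^*, \beta^*) > 0$; hence $(\mu, \varpi_\beta) > 0$ if and only if $n_\beta > 0$. Combining this with the previous paragraph proves the equivalence.

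The computation is routine once the various pairings are kept straight; the only point deserving care — and the closest thing to an obstacle — is the compatibility $\langle \alpha, \varpi_\beta \rangle = \langle \alpha, \varpi_\beta \rangle_\der$, namely that pairing a character of $\bT$, restricted to $\bT_\der$, against a cocharacter landing in $X_*(\bT_\der)_\BQ \subset X_*(\bT)_\BQ$ gives the same number whether computed via the root datum of $(\bG, \bT)$ or of $(\bG_\der, \bT_\der)$. This is just functoriality of the evaluation pairing with respect to $\bT_\der \hookrightarrow \bT$, so it presents no genuine difficulty.
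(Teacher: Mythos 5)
Your proof is correct and follows essentially the same route as the paper's: expand $\mu$ in the coroot basis, compute $(\alpha^\vee,\varpi_\beta)=\tfrac{2}{(\alpha^*,\alpha^*)}\delta_{\alpha\beta}$ via (\ref{dual}), (\ref{products}) and the compatibility of the pairings across $\bT_\der\hookrightarrow\bT$, and reduce both sides to the sign of $n_\beta$. The only difference is cosmetic — you spell out the duality computation for $\langle\check\varpi_\beta,\mu\rangle_\der$ and invoke positive definiteness explicitly, but the argument is the same.
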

\begin{proof} 
Let $\alpha \in \Delta$. We have by (\ref{dual}) and (\ref{products})
   $$ (\alpha^{\vee}, \varpi_\beta)=\big(\frac{2}{(\alpha^*,\alpha^*)}\alpha^*,\varpi_\beta\big)= \frac{2}{(\alpha^*,\alpha^*)} \langle  \alpha, \varpi_\beta \rangle.$$ 
As the natural pairings are induced by the composition of a cocharacter with a character (cf. \cite[Part II, Section 1.3]{J}), we see that 
$$\langle  \alpha, \varpi_\beta \rangle=  \langle  \alpha, \varpi_\beta \rangle_\der.$$ Thus, 
$$ (\alpha^{\vee}, \varpi_\beta) = \frac{2}{(\alpha^*,\alpha^*)} \langle  \alpha, \varpi_\beta \rangle_\der=\frac{2}{(\alpha^*,\alpha^*)}\delta_{\alpha,\beta}.$$
Hence, 
\begin{equation*}
(\sum n_\alpha \alpha^{\vee}, \varpi_\beta )=\frac{2}{(\beta^*, \beta^*)} n_\beta> 0\text{ if and only if } \langle \check{\varpi}_\beta , \sum n_\alpha \alpha^{\vee} \rangle_\der= n_\beta>0. \qedhere
\end{equation*}
\end{proof}

Since the fundamental weights form a basis of $X^*(\bT_\der)_\BQ$, we notice that 
\begin{equation}\label{linearcombi}
   \alpha = \sum_{\beta \in \Delta} \langle \alpha, \beta^\vee{}\rangle_\der \check{\varpi}_\beta
\end{equation}
for $\alpha \in \Delta$.  After we fix an ordering on $\Delta=\{\alpha_1 < \alpha_2 < \ldots <  \alpha_r \}$, the \textit{Cartan matrix} is defined as 
\begin{equation}\label{cartanmatrix}
C \in \BQ^{\lb \Delta \rb \times \lb \Delta \rb } \text{ with } C_{ji}:=\langle \alpha_i, \alpha_{j}^\vee  \rangle_\der.
\end{equation}
Hence, by (\ref{linearcombi}), it is the base change matrix from $\{\alpha\}_{\alpha \in \Delta}$ to $\{\varpi_\alpha\}_{\alpha \in \Delta}$. For the inverse of $C$, we will need the following fact.
\begin{lemma}\label{inversecartan}
   Let $\Phi$ be irreducible. Then, all entries of $C^{-1}$ are positive rational numbers. 
\end{lemma}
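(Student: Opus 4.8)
The plan is to recognize the columns of $C^{-1}$ as the coordinate vectors of the fundamental weights $\check{\varpi}_\beta$ in the basis of simple roots, and then to prove the classical statement that, for an irreducible root system, every nonzero dominant weight is a strictly positive rational combination of the simple roots.

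By (\ref{linearcombi}) and the definition (\ref{cartanmatrix}) of $C$ one has $\alpha_i=\sum_j C_{ji}\,\check{\varpi}_j$, so $C$ is the change-of-basis matrix from $\Delta$-coordinates to $\{\check{\varpi}_\alpha\}$-coordinates on $X^*(\bT_\der)_\BQ$; inverting gives $\check{\varpi}_\beta=\sum_{\alpha\in\Delta}(C^{-1})_{\alpha\beta}\,\alpha$. Thus the $\beta$-th column of $C^{-1}$ is the coordinate vector of $\check{\varpi}_\beta$ in the root basis, and rationality of the entries is immediate from (\ref{cartanmatrix}). It therefore remains to prove $(C^{-1})_{\alpha\beta}>0$ for all $\alpha,\beta\in\Delta$, i.e.\ that each $\check{\varpi}_\beta$ is a strictly positive rational combination of the simple roots.

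To run the usual argument I would transport the $\inva$ $(\,,\,)$ on $X_*(\bT)_\BQ$ to a positive-definite symmetric form $(\,,\,)$ on $X^*(\bT_\der)_\BQ$ via the isomorphism $\chi\mapsto\chi^*$ and the basis $\Delta$; combining (\ref{dual}) and (\ref{products}) with the identification of $\langle\,,\,\rangle_\der$ and $\langle\,,\,\rangle$ on $\Phi\times X_*(\bT_\der)_\BQ$ used in the proof of Lemma \ref{equivalentcond}, one checks $\langle\xi,\alpha^\vee\rangle_\der=2(\xi,\alpha)/(\alpha,\alpha)$ for all $\xi\in X^*(\bT_\der)_\BQ$ and $\alpha\in\Delta$. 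In particular $(\alpha,\beta)\le 0$ for distinct $\alpha,\beta\in\Delta$ by Lemma \ref{rootrel}, and $(\check{\varpi}_\beta,\alpha)=\tfrac12(\alpha,\alpha)\,\delta_{\alpha\beta}\ge 0$, so each $\check{\varpi}_\beta$ is a nonzero dominant weight. Then: (i) if $\lambda=\sum_\alpha c_\alpha\alpha$ is dominant, set $\mu=\sum_{c_\alpha>0}c_\alpha\alpha$ and $\nu=\sum_{c_\alpha<0}(-c_\alpha)\alpha$; from $\nu=\mu-\lambda$ one gets $(\nu,\nu)=(\nu,\mu)-(\nu,\lambda)\le 0$, since $(\nu,\mu)\le 0$ ($\mu,\nu$ have disjoint supports and $(\alpha,\beta)\le 0$) and $(\nu,\lambda)\ge 0$ (dominance), whence $\nu=0$ and all $c_\alpha\ge 0$; (ii) if moreover $\Phi$ is irreducible, $\lambda\ne 0$ and some $c_\beta=0$, then $0\le(\lambda,\beta)=\sum_{c_\alpha>0}c_\alpha(\alpha,\beta)\le 0$ forces $(\alpha,\beta)=0$ for every $\alpha$ in the support of $\lambda$, so the Dynkin diagram disconnects into that support and its complement — contradicting irreducibility. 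Applying (ii) to each $\check{\varpi}_\beta$ gives $(C^{-1})_{\alpha\beta}>0$, and we are done.

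The only genuine work is bookkeeping: keeping track of which space ($X^*(\bT_\der)_\BQ$ versus $X^*(\bT)_\BQ$) and which pairing ($\langle\,,\,\rangle_\der$ versus $\langle\,,\,\rangle$) one is using, and extracting the sign facts $(\alpha,\beta)\le 0$ for $\alpha\ne\beta$ and $(\check{\varpi}_\beta,\alpha)\ge 0$ from Lemma \ref{rootrel}, (\ref{dual}) and (\ref{products}). The combinatorial core, steps (i)--(ii), is precisely the textbook proof that the inverse Cartan matrix has positive entries, so I anticipate no real difficulty there; alternatively one could simply cite this fact from the literature on root systems.
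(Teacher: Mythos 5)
Your proof is correct, but it takes a different route than the paper: the paper's entire proof of Lemma \ref{inversecartan} is a one-line citation to the literature (\cite[Section 5, p.\ 19]{LG}), whereas you reconstruct the classical argument self-containedly. Your Step 1 (identifying the $\beta$-th column of $C^{-1}$ with the coordinates of $\check{\varpi}_\beta$ in the simple-root basis, from (\ref{linearcombi}) and (\ref{cartanmatrix}) with the paper's transposed indexing convention $C_{ji}=\langle\alpha_i,\alpha_j^\vee\rangle_\der$) is right, and your transport of the invariant inner product to $X^*(\bT_\der)_\BQ$ via $\chi\mapsto\chi^*$, (\ref{dual}), (\ref{products}), and the compatibility of $\langle\,,\,\rangle$ with $\langle\,,\,\rangle_\der$ from the proof of Lemma \ref{equivalentcond} yields the needed identity $\langle\xi,\alpha^\vee\rangle_\der=2(\xi,\alpha)/(\alpha,\alpha)$, from which $(\alpha,\beta)\le 0$ for $\alpha\ne\beta$ (Lemma \ref{rootrel}) and $(\check{\varpi}_\beta,\alpha)=\tfrac12(\alpha,\alpha)\delta_{\alpha\beta}\ge 0$ follow. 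Your steps (i)--(ii) are the standard positivity argument for the inverse Cartan matrix of an irreducible root system and are carried out correctly: nonnegativity via $(\nu,\nu)=(\nu,\mu)-(\nu,\lambda)\le 0$, then strict positivity via connectedness of the Dynkin diagram. The trade-off is exactly what you'd expect: the paper's citation is terse but opaque, whereas your version is longer but verifiable in-line and carefully tracks which pairing lives on which lattice, which is the only subtle bookkeeping issue given the paper's $\inva$ formalism. Alternatively, as you note, one could simply keep the paper's citation; your argument would be a reasonable thing to move into a remark if self-containedness were desired.
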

\begin{proof}
   This is explained in \cite[Section 5,p. 19]{LG}.
\end{proof}
 With the definition of (\ref{dualbase}), we also obtain an alternative description of the standard parabolic subgroups of $\bG$. For a one-parameter subgroup $\mu \in X_*(\bG)$ defined over some field extension $L$ of $K$, we denote by $\bP(\mu)$ the parabolic subgroup of $\bG_L$ whose $\overline{K}$-valued points are given by 
\begin{equation*}
   \bP(\mu)(\ov K)=\big\{g \in \bG(\ov K)  \mid \lim_{t \rightarrow 0} \mu(t)g\mu(t)^{-1} \text{ exists in } \bG(\ov K) \big\}
\end{equation*}
(cf. \cite[Definition 2.3/Proposition 2.6]{M}). We have seen that $\langle \beta, \varpi_\alpha\rangle= \delta_{\alpha,\beta}$ for $\alpha, \beta \in \Delta$. Thus, \cite[Proof of Proposition 8.4.5/Lemma 15.1.2]{Sp} implies that $\bP_{\Delta\backslash\{\alpha\}}=\bP(\varpi_\alpha)$. We deduce from Remark \ref{intersectionparabolic} that 
\begin{equation*}
   \bP_I=\bigcap_{\alpha \not\in I}\bP_{\Delta\backslash\{\alpha\}}=\bigcap_{\alpha \not\in I}\bP(\varpi_\alpha)
\end{equation*}
for $I \subset \Delta$. \\

\subsection{The functor $\CF^G_P$}\label{s:FGP}

Let the ground field $K$ be a finite extension of $\BQ_p$ and $(\bG,\bT)$ a split pair over $K$ of rank $d$ (cf. section \ref{s:rootdatum}). Further, let $(\bT, \bB$) be a fixed Borel pair and $L$ a finite extension of $K$. Let $G=\bG(K)$ and $P=\bP(K)$ for some standard parabolic subgroup $\bP$ of $\bG$. As mentioned in \cite[p. 443]{ST2}, $G$ and $P$ are locally $K$-analytic groups.  \\

Orlik and Strauch defined in \cite{OSt} the bi-functor 

\begin{align*}\CF^G_P: \CO^{\fkp}_{\alg} \times \Rep^{\infty,\mathrm{adm}}_L(L_P) &\longrightarrow \Rep^{\ell a}_{L}(G)  \\
(M,V) &\mapsto \CF^G_P(M,V)
\end{align*}
which is contravariant in $M$ and covariant in $V$ (cf. \cite[Proposition 4.7]{OSt}). In case $V$ is the trivial representation $\textbf{1}$, we will write $\CF^G_P(M)$. We briefly explain the basics about this functors and start with the mentioned categories. \\

Let $V$ be a Hausdorff barelled locally convex $L$-vector space. Then, $C^{an}(G,V)$ is the \textit{locally convex $L$-vector space of locally $L$-analytic functions on $G$ with values in $V$} (see \cite[Section 2, p. 447]{ST2} for a detailed description). 
Further, $$D(G):=C^{an}(G,L)'$$ is the \textit{locally convex vector space of $L$-valued distributions on G} (cf. \cite[Section 2, Definition, p. 447]{ST2}). Additionally, with convolution as multiplication, it is an associative $L$-algebra  (cf. \cite[Proposition 2.3]{ST2}).

\begin{definition}\cite[Section 3, p. 451, Definition]{ST2} A \textit{locally analytic $G$-representation} $V$ (over $L$) is a Hausdorff barelled locally convex $L$-vector space $V$ equipped with a $G$-action by continuous linear endomorphisms such that, for each $v \in V$, the orbit map $\rho_v(g):=gv$ lies in $C^{an}(G,V)$.    We denote the category of such representations by $\Rep^{\ell a}_{L}(G)$.

\end{definition}
As in the algebraic or smooth case,   we also have the induction functor. 
\begin{definition}\cite[Section 2.2, p. 103]{OSt}
    Let $H$ be a closed subgroup of $G$ and $(V,\rho)$ a locally analytic representation of $H$. The \textit{locally analytic induced representation $\Ind^G_H(V)$} is defined as 
$$  \Ind^G_H(V)=\{f \in C^{an}(G,V) \mid  f(gh)=\rho(h^{-1})f(g) \, \forall h \in H, \forall g \in G \}.$$
The group $G$ acts on $\Ind^G_H(V)$ by $(g.f)(x)=f(g^{-1}x)$. 
\end{definition}

Over the complex numbers, the BGG category $\mathcal{O}$ and its parabolic version $\mathcal{O}^{\mathfrak{p}}$ provide powerful tools to investigate (infinite dimensional) representations of Lie algebras. A good reference for this topic is \cite{H2}. For our setting, this was considered in detail in \cite[Section 2.5]{OSt} by Orlik and Strauch.

\begin{definition}\cite[p. 106]{OSt} \label{catOp}
   By $\mathcal{O}^{\mathfrak{p}}$ we denote the full subcategory of 
   $\operatorname{Mod}U(\mathfrak{g})$ whose objects $M$ satisfy  the following conditions: 
\begin{enumerate}[leftmargin=11mm]
    \item[($\mathcal{O}^\fkp1$)] $M$ is a finitely generated $U(\fkg)$-module. 
    \item[($\mathcal{O}^\fkp2$)] Viewed as an $\fkl_{\RP,L}$-module, $M$ is the direct sum of finite dimensional simple modules.
    \item[($\mathcal{O}^\fkp3$)]$M$ is locally $\fku_{\RP,L}$-finite.
   \end{enumerate}       
\end{definition}

If $\bP$ is a Borel, we denote the category by $\mathcal{O}$. Notice  that $\mathcal{O}^{\fkg}$ is the 
category of all finite dimensional (semisimple) $U(\mathfrak{g})$-modules. Moreover, for a standard parabolic $\mathbf{Q} \supset \mathbf{P} $, we have that $\mathcal{O}^{\mathfrak{q}} \subset 
\mathcal{O}^{\mathfrak{p}}$. Hence, $\mathcal{O}^{\mathfrak{p}}$ is a full subcategory of $\mathcal{O}$ 
and contains all finite dimensional  $U(\mathfrak{g})$-modules. Additionally, $\mathcal{O}^{\mathfrak{p}}$ 
is an $L$-linear, abelian, artinian and noetherian category which is closed under taking submodules and quotients. 
Thus, the Jordan-Hölder series of an object of $\mathcal{O}^{\mathfrak{p}}$ lies in $\mathcal{O}^{\mathfrak{p}}$. \\

Letting $\operatorname{Irr}(\mathfrak{l}_{P,L})^{\operatorname{fd}}$ be the set of 
isomorphism classes of finite dimensional irreducible $\mathfrak{l}_{P,L}$-modules, we have for $M \in \mathcal{O}^{\mathfrak{p}}$ that 
\begin{equation*}
    M=\bigoplus_{\mathfrak{a} \in \operatorname{Irr}(\mathfrak{l}_{P,K})^{\operatorname{fd}}} 
    M_\mathfrak{a},
\end{equation*}
by property ($\mathcal{O}^\fkp2$) in Definition \ref{catOp}, with $M_\mathfrak{a} \subset M$ being the $\mathfrak{a}$-isotypic part of the representation 
$\mathfrak{a}$. Similiar to before there is an algebraic subcategory in $\mathcal{O}^{\mathfrak{p}}$.

\begin{definition}\cite[p. 106]{OSt}  Let $\mathcal{O}^{\mathfrak{p}}_{\mathrm{alg}}$ be the full subcategory of 
$\mathcal{O}^{\mathfrak{p}}$ with objects $M \in \mathcal{O}^{\mathfrak{p}}$ satisfying the following property: 
whenever $M_\mathfrak{a}\neq 0$, then,  $\mathfrak{a}$ is induced by a finite dimensional algebraic 
$\bL_{\bP,L}$-representation. 
\end{definition}

\begin{definition}\cite[Section 1.15]{H2}
   Let $M \in \CO$. The \textit{formal character} of $M$  is defined as 
   \begin{align*}
      \ch(M):\fkt^*_L &\longrightarrow \BZ^+  \\
             \lambda &\mapsto \dim_L(M_\lambda).
   \end{align*}
\end{definition}

Let $M \in \CO^{\fkp}_{\alg}$. Then, by the very definition of the category $\CO^{\fkp}_{\alg}$, there is a finite-dimensional representation $(W,\rho) \subset M$ of $\fkp_L$ which generates $M$ as $U(\fkg)$-module. We call such a tuple $(M,W)$ an \textit{$\CO^{\fkp}_{\alg}$-pair}. Hence, such a pair comes with a short exact sequence of $U(\fkg)$-modules
\begin{equation}\label{sesCatO} 
   0 \rightarrow \fkd \rightarrow U(\fkg) \otimes_{U(\fkp)} W \rightarrow M \rightarrow 0
\end{equation}
with $\fkd$ being the kernel of the natural map $ U(\fkg) \otimes_{U(\fkp)} W \rightarrow M$.

\begin{example}\cite[Example 2.10]{OSt} \label{genVM} Let $I \subset \Delta$ such that $\bP=\bP_I$.  For  $\lambda \in X^*(\bT)_I^+$, there is a corresponding finite dimensional 
   irreducible algebraic $\bL_{\bP,L}$-representation $V_I(\lambda)$, 
   which can be viewed as a $\bP_L$-representation by letting $\bU_{\bP,L}$ act trivially on it. Then, 
   $$M_I(\lambda)=U(\fkg) \otimes_{U(\fkp)} V_I(\lambda)$$
   is the \textit{generalized parabolic Verma module} associated to $\lambda$, with simple quotient $L(\lambda)$ which both lie in $\CO^{\fkp}_{\alg}$. In case $I=\emptyset$ we omit the subscript. We have a surjection
               $$q_I:M(\lambda) \rightarrow M_I(\lambda)$$ 
   with the kernel being the image of $\bigoplus_{\alpha \in I} M(s_{\alpha} \cdot \lambda) \rightarrow M(\lambda)$ (cf. \cite[Proposition 2.1]{Le2}).
   Furthermore, for $J \subset I$, there is a transition map 
   \begin{equation}\label{transition}
     q_{J,I}:M_J(\lambda) \rightarrow M_I(\lambda)
   \end{equation}
  such that $q_I=q_{I,J}\circ q_J$ (cf. \cite[Section 2, p. 653]{OSch}).
\end{example}

 \begin{lemma}\label{Verma}\cite[Lemma 1]{B}
Let $\lambda, \mu \in \fkt_L^*$ and $M$ a $U(\fkg)$-submodule of $M(\lambda)$, such that $\ch(M)=\ch(M(\mu))$. Then, $M$ is isomorphic to $M(\mu)$. 
\end{lemma}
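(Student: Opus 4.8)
The plan is to exploit the $\fkt_L$-weight space structure on $M(\lambda)$ and the basic fact that the parabolic (indeed Borel) Verma module $M(\mu)$ is free of rank one over $U(\fkn^-_L)$, where $\fkn^-_L$ is the nilradical of the opposite Borel. Concretely, $M(\lambda)=U(\fkn^-_L)\otimes_L L_\lambda$ as an $\fkn^-_L$-module, so every weight space $M(\lambda)_\nu$ is finite-dimensional, with $\dim_L M(\lambda)_\nu$ equal to the number of ways of writing $\lambda-\nu$ as a sum of positive roots (Kostant's partition function). In particular $M(\lambda)$ has a single highest weight, namely $\lambda$, occurring with multiplicity one.

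First I would observe that, since $\ch(M)=\ch(M(\mu))$ and $M\subset M(\lambda)$, the module $M$ contains a nonzero vector of weight $\mu$, and $\mu$ is a highest weight of $M$: indeed if $\mu+\alpha$ were a weight of $M$ for some $\alpha\in\Phi^+$ then it would be a weight of $M(\mu)$, contradicting the fact that $\mu$ is the unique maximal weight of $M(\mu)$ (all weights of $M(\mu)$ lie in $\mu-\sum_{\alpha\in\Delta}\BZ_{\geq 0}\alpha$, and $\mu$ occurs with multiplicity one there). Hence there is $0\neq v\in M_\mu\subset M(\lambda)_\mu$ which is annihilated by $\fkn_L$ (the positive nilradical), i.e. $v$ is a highest weight vector of weight $\mu$ inside $M(\lambda)$. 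By the universal property of Verma modules, $v$ generates a quotient of $M(\mu)$ inside $M(\lambda)$; call it $M'=U(\fkg)v\subset M\subset M(\lambda)$.

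Next I would upgrade this to an isomorphism $M'\cong M(\mu)$. The map $M(\mu)\twoheadrightarrow M'$ is an isomorphism of $U(\fkn^-_L)$-modules precisely when it is injective, and since $M(\mu)$ is $U(\fkn^-_L)$-free of rank one generated by its highest weight vector, injectivity is equivalent to: the vector $v$ is not killed by any nonzero element of $U(\fkn^-_L)$. But $M(\lambda)$ is itself $U(\fkn^-_L)$-free of rank one, so torsion-free over the domain $U(\fkn^-_L)$; hence $u\cdot v=0$ with $0\neq u\in U(\fkn^-_L)$ is impossible for $v\neq 0$. Therefore $M'\cong M(\mu)$, and comparing formal characters gives $\ch(M')=\ch(M(\mu))=\ch(M)$. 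Since $M'\subseteq M$ with equal (finite in each weight) formal characters, $\dim_L M'_\nu=\dim_L M_\nu$ for every weight $\nu$, forcing $M'=M$, so $M\cong M(\mu)$.

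The main obstacle is really just making the two structural facts precise and invoking them cleanly: that $\mu$ must be a \emph{highest} weight of $M$ (so that a genuine $\fkn_L$-annihilated vector exists, not merely a weight vector), and that $U(\fkn^-_L)$ is an integral domain acting freely on $M(\lambda)$ so that no torsion can occur. Both are standard: the weight-ordering statement follows from $M\subset M(\lambda)$ together with the fact that the weights of $M(\mu)$ are bounded above by $\mu$ with multiplicity one at $\mu$, and the domain property of $U(\fkn^-_L)$ is the Poincaré--Birkhoff--Witt theorem (the associated graded is a polynomial ring). I would cite \cite[Section 1.2--1.3]{H2} for the Verma module weight-space description and PBW, and present the argument in the order above, with the character comparison as the closing step.
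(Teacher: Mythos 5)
The paper does not prove this lemma; it cites it directly from \cite[Lemma 1]{B} (Bernstein--Gelfand--Gelfand). Your argument is correct and complete, and it is the standard proof of the BGG lemma: extract a highest-weight vector of weight $\mu$ from the character equality, use the universal property to map $M(\mu)$ onto $U(\fkg)v$, kill the kernel via $U(\fkn^-_L)$-torsion-freeness of $M(\lambda)$ (PBW makes $U(\fkn^-_L)$ a domain and $M(\lambda)$ free of rank one over it), and then compare characters to conclude $U(\fkg)v = M$.
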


Next, we say a few words about the construction of the functor $\CF^G_P$. For this, let $V \in \Rep^{\infty,\, \mathrm{adm}}_L(L_P)$. By inflation, we consider $V$ as a representation of $P$. Equipping $V$ with the finest locally convex  $L$-vector space topology, it is of compact type and carries the structure of a locally analytic $P$-representation (cf. \cite[p. 117]{OSt}). For an $\CO^{\fkp}_{\alg}$-pair $(M,W)$, Orlik und Strauch consider $W'\otimes_L V$ as the projective (or inductive) tensor product which is complete and a locally analytic $P$-representation via the diagonal action (cf. \cite[p. 117]{OSt}). Then, they defined\footnote{\begin{equation*}
   \langle \, , \, \rangle_{C^{an}(G,V)}:D(G)\otimes_{D(P)} W \times \Ind^G_P(W'\otimes V) \longrightarrow C^{an}(G,L), \,
   (\delta \otimes w, f)\mapsto \Big[g \mapsto \Big(\delta \cdot_r\big(ev_w \circ f \big)\Big)(g)\Big]     
   \end{equation*}} (cf. \cite[(4.4.1), p. 117]{OSt})
\begin{align*}
   \CF^G_P(M,W,V)&:= \Ind^G_P(W'\otimes V)^\fkd \\
                 &:=\{f \in \Ind^G_P(W'\otimes V) \mid \langle \delta  , f \rangle_{C^{an}(G,V)} =0 \text{ for all } \delta \in \fkd \}.
\end{align*}

The definition is independent of the chosen $\CO^{\fkp}_{\alg}$-pair $(M,W)$ (cf. \cite[Section 4.6]{OSt}). Therefore, we write $\CF^G_P(M,V)$ for any $\CF^G_P(M,W, V)$. One of the most basic properties of the functor $\CF^G_P$ is the following. 
\begin{proposition}\label{exact}\cite[Proposition 4.9]{OSt} \begin{enumerate}[label=\roman*)]
   \item The bi-functor $\CF^G_P$ is exact in both arguments. 
   \item If $Q \supset P$ is a parabolic subgroup, $\fkq=\Lie(Q)$, and $M \in \CO^{\fkq}_{\alg}$, then 
               $$\CF^G_P(M,V)= \CF^G_Q\big(M,i^{L_Q}_{L_P(L_Q\cap U_P)}(V)\big)$$
   where $i^{L_Q}_{L_P(L_Q\cap U_P)}(V)=i^Q_P(V)$ denotes the corresponding induced representation in the category of smooth representations. 
\end{enumerate}
\end{proposition}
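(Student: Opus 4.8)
The plan is to keep, throughout, the concrete model $\CF^G_P(M,V)=\Ind^G_P(W'\otimes V)^{\fkd}$ attached to an $\CO^{\fkp}_{\alg}$-pair $(M,W)$, together with the dual description of \cite{OSt}: after passing to strong duals, $\CF^G_P(M,V)'$ becomes a base change $D(G)\otimes_{D(\fkg,P)}(M'\otimes_L V')$ along a flat ring map $D(\fkg,P)\to D(G)$, where $D(\fkg,P)$ is a suitable algebra built from $U(\fkg)$ and $D(P)$. The virtue of this model is that every exactness question turns into either exactness of $\Ind^G_P$ on locally convex spaces, or --- after dualizing --- exactness of a tensor product over $D(\fkg,P)$, while the transitivity statement ii) becomes transitivity of one of these operations. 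For i), exactness in the second variable: $W'\otimes_L(-)$ is exact since $\dim_L W'<\infty$, and $\Ind^G_P(-)$ is exact on locally analytic $P$-representations because $G/P$ is compact, so induced representations are global sections of a vector bundle over $G/P$ with vanishing higher cohomology (cf. \cite{OSt}). It remains to see that $(-)^{\fkd}$ respects short exact sequences; using that $U(\fkg)$ is noetherian one picks a finite-dimensional $\fkp$-stable subspace $W_{\fkd}$ generating $\fkd$ as a $U(\fkg)$-module, which realizes $\CF^G_P(M,V)$ as the kernel of a $G$-equivariant, continuous, strict map $\Ind^G_P(W'\otimes V)\to\Ind^G_P(W_{\fkd}'\otimes V)$ that is natural in $V$. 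A short exact sequence of smooth admissible $L_P$-representations then gives a commutative ladder with strict exact rows, and the snake lemma in the category of compact-type spaces finishes it once the connecting morphism is seen to vanish; equivalently one works on strong duals, where the statement is exactness of $D(G)\otimes_{D(\fkg,P)}(-)$, visible from flatness of $D(G)$ over $D(\fkg,P)$ (finite, again because $G/P$ is compact) together with exactness of strong duality on the compact-type spaces and nuclear Fréchet spaces involved.

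For i), exactness in the first variable: left-exactness of the contravariant functor $\CF^G_P(-,V)$ is essentially immediate, since $\CF^G_P(M,V)$ is the joint kernel of the pairings against $\fkd$ and an epimorphism $M_2\twoheadrightarrow M_1$ only enlarges the relation module, so that $0\to\CF^G_P(M_3,V)\to\CF^G_P(M_2,V)\to\CF^G_P(M_1,V)$ is exact. For the remaining surjectivity onto $\CF^G_P(M_1,V)$ it is cleanest to pass to strong duals: a short exact sequence in $\CO^{\fkp}_{\alg}$ dualizes to a short exact sequence of $D(\fkg,P)$-modules (vector-space duality is exact), $D(G)\otimes_{D(\fkg,P)}(-)$ is right-exact, and reflexivity of the spaces in sight transports this back to $\CF^G_P(M_2,V)\twoheadrightarrow\CF^G_P(M_1,V)$.

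For ii): since $M\in\CO^{\fkq}_{\alg}$ we may choose the generating subspace $W\subset M$ to be $\fkq$-stable, hence $\fkp$-stable, and being an object of $\CO^{\fkq}_{\alg}$ it is algebraic, so $W'$ carries an algebraic $\bQ$-action with $\bU_{\bQ}$ acting trivially. By transitivity of induction $\Ind^G_P(W'\otimes V)=\Ind^G_Q\big(\Ind^Q_P(W'\otimes V)\big)$, and the projection formula for the algebraic representation $W'$ gives $\Ind^Q_P(W'\otimes V)\cong W'\otimes\Ind^Q_P(V)$; hence the left-hand side equals $\Ind^G_Q\big(W'\otimes\Ind^Q_P(V)\big)^{\fkd_P}$ with $\fkd_P=\ker\big(U(\fkg)\otimes_{U(\fkp)}W\to M\big)$, while the right-hand side is $\Ind^G_Q\big(W'\otimes i^Q_P(V)\big)^{\fkd_Q}$ with $\fkd_Q=\ker\big(U(\fkg)\otimes_{U(\fkq)}W\to M\big)$. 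One has $\fkd_P=\pi^{-1}(\fkd_Q)$ for the canonical surjection $\pi:U(\fkg)\otimes_{U(\fkp)}W\twoheadrightarrow U(\fkg)\otimes_{U(\fkq)}W$, whose kernel is generated as a $U(\fkg)$-module by the elements $x\otimes w-1\otimes xw$ with $x\in\fkq$, $w\in W$. The crux is then to check that imposing vanishing against exactly those distinguished elements cuts $W'\otimes\Ind^Q_P(V)$ down to $W'\otimes i^Q_P(V)$, i.e.\ forces local constancy along the fibres of $Q/P$; after that the residual relations $\fkd_P$ and $\fkd_Q$ correspond, yielding $\CF^G_P(M,V)=\CF^G_Q(M,i^Q_P(V))$. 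Dually this is the transitivity $D(G)\otimes_{D(\fkg,Q)}\big(D(\fkg,Q)\otimes_{D(\fkg,P)}(-)\big)=D(G)\otimes_{D(\fkg,P)}(-)$ together with the identification $D(\fkg,Q)\otimes_{D(\fkg,P)}V'\cong i^Q_P(V)'$ of a distribution base change with smooth induction.

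I expect the main obstacle to be functional-analytic rather than algebraic: one must know that every map occurring above is strict with closed image, so that kernels and cokernels stay in the relevant category and strong duality remains exact --- this is exactly where the standing hypotheses ($V$ smooth admissible, $W$ finite-dimensional, $G/P$ compact) are used --- and, for ii), one must pin down the identification of the locally analytic, fibrewise-invariant functions on $Q/P$ with the locally constant ones, i.e.\ with $i^Q_P(V)$. The purely algebraic inputs --- noetherianity of $U(\fkg)$, free presentations in $\CO^{\fkp}_{\alg}$, transitivity of induction, the projection formula for an algebraic twist --- are routine.
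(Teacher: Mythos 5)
The paper does not prove this proposition; it is imported verbatim from \cite[Proposition~4.9]{OSt}, so there is no in-text argument to compare against. Your sketch reconstructs the Orlik--Strauch proof along essentially the same lines they use: keep the $(-)^{\fkd}$ model, pass to strong duals where $\CF^G_P(M,V)'$ becomes a base change $D(G)\otimes_{D(\fkg,P)}(M\otimes_L V')$, read exactness in both arguments off flatness of $D(G)$ over $D(\fkg,P)$, and obtain (ii) from transitivity of this base change together with the identification $D(\fkg,Q)\otimes_{D(\fkg,P)}V'\cong i^Q_P(V)'$. Two cautions. First, the parenthetical ``finite, again because $G/P$ is compact'' for $D(\fkg,P)\subset D(G)$ is not right: compactness of $G/P$ gives finitely many relevant double cosets and from that one deduces that $D(G)$ is a \emph{free} $D(\fkg,P)$-module, but not a finite one; it is freeness (hence flatness), not finiteness, that the tensor-exactness argument needs. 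Second, in the primal version of exactness in $V$ you invoke a snake lemma ``once the connecting morphism is seen to vanish'' but do not say why it vanishes; this is a real gap, and the cleaner route --- which you also sketch and which is what \cite{OSt} actually do --- is to argue entirely on the dual side, so the snake-lemma detour should simply be dropped rather than left hanging. Your outline of (ii) has the right shape; the step you yourself flag as delicate, namely that imposing vanishing against the elements $x\otimes w-1\otimes xw$ for $x\in\fkq$ forces local constancy along the fibres of $Q/P$ and so cuts $W'\otimes\Ind^Q_P(V)$ down to $W'\otimes i^Q_P(V)$, is indeed the one place where an actual argument (not just bookkeeping of kernels) is required.
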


We devote the last part of this subsection to an application of the functor $\CF_P^G$. Let $I \subset \Delta$ and $P_I$ be the associated parabolic subgroup of $G$. Then, we notice that 
$$ \Ind^G_{P_I}(\textbf{1})=\CF_{P_I}^G(M_I(0))$$ 
where $0$ is the weight sent to the zero vector under the identification $X^*(\bT)\cong \BZ^d$. More generally, for $\lambda \in X^*(\bT)^+$ (cf. (\ref{dominantweights})), we have 
\begin{equation}\label{IGP}
   I^G_{P_I}(\lambda):=\Ind^G_{P_I}(V_I(\lambda)')=\CF_{P_I}^G(M_I(\lambda))
\end{equation}
since the $\CO_{\alg}^{\fkp_I}$-pair $(V_I(\lambda), M_I(\lambda))$ has trivial kernel $\fkd$ (cf. (\ref{sesCatO})). For $I \subset J \subset \Delta$, the morphism $q_{I,J}$ (cf. (\ref{transition})) induces by the functoriality of the functor $\CF^G_P$ a map 
$$ p_{J,I}:  I^G_{P_J}(\lambda)=\CF_{P_J}^G(M_J(\lambda), \textbf{1}) \xrightarrow{\CF^G_{P_J}(q_{I,J}, \, \mathrm{incl.})} \CF_{P_J}^G(M_I(\lambda), i^{P_J}_{P_I})\cong\CF_{P_I}^G(M_I(\lambda))=I^G_{P_I}(\lambda)$$
of locally analytic $G$-representations. Furthermore, the map $p_{J,I}$ is injective and has closed image (cf. \cite[p. 660]{OSch}). 
\begin{definition}\cite[p. 661]{OSch}
For $I\subset \Delta$, 
$$ V^G_{P_I}(\lambda):= I^G_{P_I}(\lambda)\Big/ \sum_{J \supsetneq I} I^G_{P_J}(\lambda) $$
is the \textit{twisted generalized Steinberg representation} associated to $\lambda$. 
\end{definition}
It has the following resolution in $\Rep^{\ell a}_{L}(G)$. 

\begin{theorem}\cite[Theorem 4.2]{OSch}
Let $\lambda \in X^*(\bT)^+$ and $I \subset \Delta$. Then, the following complex is a resolution of $V^G_{P_I}(\lambda)$ by locally analytic $G$-representations, 
\begin{align*}
   0 \rightarrow I^G_G(\lambda) \rightarrow \bigoplus_{\substack{I \subset K \subset \Delta \\ \lb \Delta \backslash K \rb = 1}} I^G_{P_K}(\lambda) &\rightarrow \bigoplus_{\substack{I \subset K \subset \Delta \\ \lb \Delta \backslash K \rb = 2}} I^G_{P_K}(\lambda) \rightarrow \ldots \\
   \ldots &\rightarrow \bigoplus_{\substack{I \subset K \subset \Delta \\ \lb K \backslash I \rb = 1}} I^G_{P_K}(\lambda) \rightarrow I^G_{P_I}(\lambda) \rightarrow V^G_{P_I}(\lambda)\rightarrow 0. 
\end{align*}
\end{theorem}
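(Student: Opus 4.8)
The plan is to read the displayed complex as the augmented \v{C}ech complex attached to the family of subrepresentations $\{I^G_{P_{I\cup\{\alpha\}}}(\lambda)\}_{\alpha\in\Delta\backslash I}$ of $I^G_{P_I}(\lambda)$ and to deduce its exactness from a distributivity property of that family. Recall first that each $p_{J,I}\colon I^G_{P_J}(\lambda)\to I^G_{P_I}(\lambda)$ with $I\subseteq J$ is injective with closed image, so that all $I^G_{P_K}(\lambda)$ with $I\subseteq K\subseteq\Delta$ embed compatibly into $I^G_{P_I}(\lambda)$ and the differentials of the complex are the usual alternating sums of these embeddings. With this identification, exactness at $V^G_{P_I}(\lambda)$ is surjectivity of the defining quotient map, while exactness at $I^G_{P_I}(\lambda)$ reduces to $\sum_{J\supsetneq I}I^G_{P_J}(\lambda)=\sum_{\alpha\in\Delta\backslash I}I^G_{P_{I\cup\{\alpha\}}}(\lambda)$, which holds because any $J\supsetneq I$ satisfies $P_J\supseteq P_{I\cup\{\alpha\}}$ for some $\alpha\in J\backslash I$, hence $I^G_{P_J}(\lambda)\subseteq I^G_{P_{I\cup\{\alpha\}}}(\lambda)$.

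The content lies in exactness in the middle degrees, and for this I would reduce to the two assertions
$$\bigcap_{\alpha\in S}I^G_{P_{I\cup\{\alpha\}}}(\lambda)=I^G_{P_{I\cup S}}(\lambda)\qquad(S\subseteq\Delta\backslash I),$$
and that the family $\{I^G_{P_{I\cup\{\alpha\}}}(\lambda)\}_{\alpha}$ generates a distributive sublattice of the lattice of subrepresentations of $I^G_{P_I}(\lambda)$. Granting these, the augmented \v{C}ech complex of a distributive family of submodules is exact by the standard inclusion--exclusion argument, and, reindexing $K=I\cup S$ so that $I^G_{P_K}(\lambda)$ occupies the slot with $\lb\Delta\backslash K\rb$ equal to the homological degree, one recovers precisely the displayed complex. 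For $\lambda=0$ the two assertions are classical facts about the smooth inductions $\Ind^G_{P_K}(\mathbf 1)$, underlying the classical resolution of the Steinberg representation (Solomon--Tits).

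It remains to establish the intersection formula and the distributivity for general $\lambda$. I see two routes. Geometrically, one uses the description of $I^G_{P_K}(\lambda)=\Ind^G_{P_K}(V_K(\lambda)')$ as a space of $V_K(\lambda)'$-valued locally analytic functions on $G$, in which $p_{K,I}$ realises $I^G_{P_K}(\lambda)$ as the subspace of functions satisfying the stronger $P_K$-equivariance; the intersection formula then follows because $P_{I\cup S}$ is generated by the $P_{I\cup\{\alpha\}}$, $\alpha\in S$ (the irreducible algebraic representations of the Levi factors being compatible under these enhancements), and the distributivity is exactly the combinatorial input already used in the untwisted case. Alternatively, one aims to transport everything through the exact functor $\CF^G_{P_I}$: by (\ref{IGP}) one has $I^G_{P_K}(\lambda)=\CF^G_{P_K}(M_K(\lambda))$, and, up to the identifications of (\ref{IGP}) and Proposition~\ref{exact}, the embeddings $p_{K',K}$ are induced by the transition surjections $M_K(\lambda)\twoheadrightarrow M_{K'}(\lambda)$ of generalized parabolic Verma modules; the two assertions then become the dual statements for the submodules $\ker\!\big(M_I(\lambda)\twoheadrightarrow M_{I\cup S}(\lambda)\big)$ of $M_I(\lambda)$, which are accessible from the description of these kernels via images of sums of Verma modules $M(s_\alpha\cdot\lambda)$ (cf.\ Example~\ref{genVM} and \cite{Le2}). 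One may equivalently argue by induction on $\lb\Delta\backslash I\rb$, using the short exact sequences $0\to V^G_{P_{I\cup\{\alpha\}}}(\lambda)\to I^G_{P_I}(\lambda)\big/\sum_{\beta\neq\alpha}I^G_{P_{I\cup\{\beta\}}}(\lambda)\to V^G_{P_I}(\lambda)\to 0$, whose exactness is itself equivalent to the distributivity.

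The main obstacle is exactly this middle exactness, i.e.\ the distributivity of the family: for the untwisted summand it encodes the topology of the Tits building, and for the twist it requires the fine structure of the category $\CO^{\fkp}_{\alg}$, namely the precise kernels of the transition maps between generalized parabolic Verma modules. A further technical difficulty in the $\CF^G_P$-route is that $\CF^G_{P_I}(M_K(\lambda))$, for $K\supsetneq I$, is not $I^G_{P_K}(\lambda)$ itself but carries an extra smooth coefficient $i^{P_K}_{P_I}(\mathbf 1)$ of the Levi $L_{P_K}$ (Proposition~\ref{exact}), so that these smooth representations must be resolved at the same time --- in effect a double-complex argument interleaving the algebraic (Verma) resolution with the smooth generalized Steinberg resolutions of the $L_{P_K}$.
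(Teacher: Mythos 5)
This statement is cited from \cite{OSch} and is not proved in the present paper; the paper's closest analogues are the relative version (Corollary~\ref{relativeresolution}) and Theorem~\ref{theorem1}, both of which are established via a second-quadrant double complex whose rows are (generalized) parabolic Verma module resolutions and whose columns are smooth Tits-building (Solomon--Tits) resolutions, with one spectral sequence collapsing. The proof of Theorem~4.2 in \cite{OSch} proceeds in the same way.

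Your distributive-lattice/\v{C}ech framing is a genuinely different conceptual packaging, and the reduction you make is correct as far as it goes: exactness at the two right-most spots is indeed elementary, and middle exactness would follow from the intersection formula $\bigcap_{\alpha\in S}I^G_{P_{I\cup\{\alpha\}}}(\lambda)=I^G_{P_{I\cup S}}(\lambda)$ together with distributivity of the sublattice generated by the $I^G_{P_{I\cup\{\alpha\}}}(\lambda)$. You also correctly identify the precise obstruction to a naive one-variable application of $\CF^G_P$ --- namely that $\CF^G_{P_I}(M_K(\lambda))$ carries the extra smooth coefficient $i^{P_K}_{P_I}(\mathbf{1})$ --- and correctly predict that a double complex interleaving the algebraic and smooth resolutions is what resolves it. That prediction matches what \cite{OSch} and the paper's Theorem~\ref{theorem1} actually do, so your instinct about the shape of the argument is right.

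The gap is that neither route is carried out. The two key lemmas (intersection formula and distributivity) are stated but not proved, and proving them is essentially equivalent to the theorem itself; the proposal is explicit that this is where the difficulty lies, so this is not an oversight, but it means the write-up is a plan rather than a proof. In addition, the ``geometric route'' description of $p_{K,I}$ as realising $I^G_{P_K}(\lambda)$ as ``the subspace of functions satisfying the stronger $P_K$-equivariance'' is not accurate: $V_K(\lambda)'$ and $V_I(\lambda)'$ are different $L_{P_I}$-modules, and the embedding $I^G_{P_K}(\lambda)\hookrightarrow I^G_{P_I}(\lambda)$ is the image under the contravariant exact functor $\CF^G$ of the surjection $q_{I,K}\colon M_I(\lambda)\twoheadrightarrow M_K(\lambda)$; it is not a literal inclusion of spaces of $V$-valued functions with a stronger equivariance condition. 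To actually close the argument, the route of record is the double complex $D_{p,q}$ combining the Lepowsky BGG-type resolutions (yielding the $M_K(\lambda)$) with the smooth generalized Steinberg resolutions of the relevant $L_{P_K}$, applying $\CF^G_B$ and comparing the two spectral sequences, exactly as the paper later does for Theorem~\ref{theorem1}.
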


Here, the differentials $d_{K',K}: I^G_{P_{K'}}(\lambda) \rightarrow I^G_{P_{K}}(\lambda)$ are defined as follows (\cite[p. 660]{OSch}). We fix an ordering on $\Delta$. Let $K, K' \subset \Delta$ with $\lb K \rb= \lb K' \rb -1$ and $K'=\{\alpha_1 < \ldots < \alpha_r \}$. Then, 
$$ d_{K',K}= \begin{cases} (-1)^i p_{K',K}  &K'=K \cup \{\alpha_i\} \\
                                 0       & K \not\subset K' 
\end{cases}.$$
We like to stress a relative version which was shown in \cite[p. 663]{OSch} in the proof of the previous theorem. For this we follow the notion of \cite[p. 661]{OSch}. 

\begin{definition}\label{twistedSteinberg}
Let $\lambda \in X^*(\bT)^+$, $I \subset \Delta$ and $w \in W^I$. By Proposition \ref{parabolicweight}, we know that $w \cdot \lambda \in X^*(\bT)_I^+$. Then, we set  

\begin{align}
    I_{P_I}^G(w)&:=\Ind^G_{P_I}(V_I(w \cdot \lambda)')=\CF^G_{P_I}(M_I(w \cdot \lambda)) \label{twisted}, \\
    V_{P_I}^G(w)&:=I^G_{P_I}(w)\Big/\sum_{\substack{ J \supsetneq I \\ w \in W^{J}}} I_{P_{J}}^G(w). \nonumber
\end{align}
\end{definition}

\begin{corollary}\label{relativeresolution} Let $\lambda \in X^*(\bT)^+$, $I \subset \Delta$ and $w \in W^I$. Then, the following complex is acyclic 
   \begin{align*}
      0 \rightarrow I^G_{P_{I(w)}}(w) \rightarrow \ldots \rightarrow  \bigoplus_{\substack{I \subset K \subset I(w) \\ \lb K \backslash I \rb = 1}} I^G_{P_K}(w) \rightarrow I^G_{P_{I}}(w) \rightarrow V^G_{P_{I}}(w)\rightarrow 0. 
   \end{align*}
\end{corollary}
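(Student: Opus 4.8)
This is the relative analogue of the preceding theorem (\cite[Theorem 4.2]{OSch}), with $\lambda$ replaced by the twisted weight $w\cdot\lambda$, and it is in fact extracted from the proof given there (cf.\ \cite[p.~663]{OSch}). First one checks that the complex makes sense with exactly the stated index set. Every $\alpha\in K$ with $I\subset K\subset I(w)$ satisfies $l(s_\alpha w)>l(w)$ by \eqref{maximalI}, so $w\in W^K$ and hence $w\cdot\lambda\in X^*(\bT)_K^+$ by Proposition \ref{parabolicweight}; thus the generalized parabolic Verma module $M_K(w\cdot\lambda)$, and with it $I^G_{P_K}(w)=\CF^G_{P_K}(M_K(w\cdot\lambda))$, is defined. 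Conversely, for $\alpha\in\Delta\setminus I(w)$ one has $l(s_\alpha w)<l(w)$, hence $w^{-1}\alpha\in\Phi^-$, which forces $\langle w\cdot\lambda,\alpha^\vee\rangle=\langle\lambda+\rho,(w^{-1}\alpha)^\vee\rangle-1<0$; so $w\cdot\lambda\notin X^*(\bT)_{I\cup\{\alpha\}}^+$ and $M_{I\cup\{\alpha\}}(w\cdot\lambda)$ does not exist. The index set is therefore forced to be the Boolean lattice $\{K:I\subset K\subset I(w)\}$ on $I(w)\setminus I$, and the differentials $d_{K',K}=(-1)^ip_{K',K}$ are inherited functorially from the transition maps $q_{K,K'}$ of \eqref{transition}, as in \cite[p.~660]{OSch}.

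The substance is to recognize the complex as an inclusion--exclusion (\v{C}ech-type) resolution. By the statement recorded just before Definition \ref{twistedSteinberg}, each $p_{J,I}\colon I^G_{P_J}(w)\to I^G_{P_I}(w)$ with $I\subset J\subset I(w)$ is injective with closed image, so all these $I^G_{P_J}(w)$ are subrepresentations of $I^G_{P_I}(w)$; and by Definition \ref{twistedSteinberg} one has $V^G_{P_I}(w)=I^G_{P_I}(w)\big/\sum_{\alpha\in I(w)\setminus I}I^G_{P_{I\cup\{\alpha\}}}(w)$, the $J\supsetneq I$ with $w\in W^J$ being exactly those with $I\subsetneq J\subset I(w)$. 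The crucial claim is the intersection formula
\[
   \bigcap_{\alpha\in S}I^G_{P_{I\cup\{\alpha\}}}(w)=I^G_{P_{I\cup S}}(w)\qquad\text{for every }S\subset I(w)\setminus I,
\]
together with distributivity of the finite family $\{I^G_{P_{I\cup\{\alpha\}}}(w)\}_{\alpha\in I(w)\setminus I}$ of subrepresentations of $I^G_{P_I}(w)$. Granting these, the complex of the corollary is precisely the augmented \v{C}ech complex of this family, the term indexed by $S$ being $\bigcap_{\alpha\in S}I^G_{P_{I\cup\{\alpha\}}}(w)=I^G_{P_{I\cup S}}(w)$ (so $S=\emptyset$ gives $I^G_{P_I}(w)$ and $S=I(w)\setminus I$ gives $I^G_{P_{I(w)}}(w)$), and the standard homological fact that the \v{C}ech complex of a distributive family of subobjects in an abelian category resolves the quotient by their sum yields the asserted acyclicity.

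To establish the intersection formula and distributivity I would argue exactly as for the untwisted weight in \cite[Theorem 4.2]{OSch}: tracing the $I^G_{P_J}(w)$ back through the functors $\CF^G_{(-)}$ (using their exactness, Proposition \ref{exact}) to the transition maps $q_{I,J}$ of generalized parabolic Verma modules (Example \ref{genVM}), the statement reduces to a combinatorial fact about those maps resting ultimately on $\bP_I\cap\bP_J=\bP_{I\cap J}$ (Remark \ref{intersectionparabolic}) and the description of $\Ker(q_I)$ in Example \ref{genVM}. Since $w\cdot\lambda$ is $\bL_{\bP_K}$-dominant for all $K$ with $I\subset K\subset I(w)$, these arguments carry over verbatim with $\lambda$ replaced by $w\cdot\lambda$ and $\Delta$ by $I(w)$. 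Alternatively one can bypass the general lemma and induct on $n:=\lb I(w)\setminus I\rb$: for $n=0$ nothing is to be shown; for $n\ge1$ one fixes $\beta\in I(w)\setminus I$, splits the Boolean lattice according to whether $\beta\in K$, realizes the total complex as the cone of the evident degree-preserving morphism from the analogous complex for the pair $(I\cup\{\beta\},I(w))$ to that for $(I,I(w)\setminus\{\beta\})$ — both acyclic with the correct twisted Steinberg cokernel by induction — and reads off the claim from the long exact homology sequence, identifying the right-hand end with $V^G_{P_I}(w)$ via its defining presentation.

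I expect the intersection/distributivity step to be the only real obstacle: one must confirm that intersections of the subrepresentations $I^G_{P_{I\cup\{\alpha\}}}(w)$ are again exactly the $I^G_{P_K}(w)$ and that the family sits in sufficiently general position for its \v{C}ech complex to be exact (equivalently, in the inductive approach, that the connecting map in the long exact sequence is injective). The remaining ingredients — the index set, the existence of the $M_K(w\cdot\lambda)$, the functoriality of the differentials, and the passage to $V^G_{P_I}(w)$ — are formal, and are already handled inside the proof of \cite[Theorem 4.2]{OSch}.
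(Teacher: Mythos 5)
The paper gives no proof of this corollary: it simply notes, in the sentence preceding the statement, that the relative version ``was shown in [OSch, p.~663] in the proof of the previous theorem.'' Your outline is compatible with that, and, like the paper, ultimately defers the one non-formal ingredient to the same source. Since $\CF^G_B$ is contravariant and exact, the claim is equivalent to exactness in positive degrees of the cochain complex of parabolic Verma modules
\[
   M_I(w\cdot\lambda)\longrightarrow\bigoplus_{\substack{I\subset K\subset I(w)\\ \lb K\backslash I\rb=1}}M_K(w\cdot\lambda)\longrightarrow\cdots\longrightarrow M_{I(w)}(w\cdot\lambda)\longrightarrow 0,
\]
and you correctly identify this --- dually, your intersection formula together with distributivity of the family of subrepresentations $I^G_{P_{I\cup\{\alpha\}}}(w)$ inside $I^G_{P_I}(w)$ --- as exactly what must be imported from the proof of \cite[Theorem~4.2]{OSch}. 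Your observation that $M_K(w\cdot\lambda)$ is defined precisely for $I\subset K\subset I(w)$ is also correct and usefully pins down the index set.

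One caveat on the inductive alternative you sketch: after fixing $\beta\in I(w)\setminus I$, the half of the Boolean lattice with $\beta\notin K$ has top element $I(w)\setminus\{\beta\}$ rather than $I(w)$, so the inductive statement must be formulated for an arbitrary top set $T$ with $I\subset T\subset I(w)$, with the complex resolving $I^G_{P_I}(w)\big/\sum_{\alpha\in T\setminus I}I^G_{P_{I\cup\{\alpha\}}}(w)$ rather than $V^G_{P_I}(w)$. Moreover, $H_1$ of the mapping cone vanishes exactly when the induced map between the $H_0$ of the two halves is injective --- not when the connecting map is injective, as you wrote --- and that injectivity is again essentially the distributivity input. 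So the induction reorganizes, but does not bypass, the step you yourself flag as the obstacle.
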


In \cite[Theorem 4.6]{OSch}, it was shown that the Jordan-Hölder factors of $V^G_B(\lambda)$ are of the form $\CF^G_{P_I}\Big(L(w\cdot \lambda), v^{P_I}_{P_J}\Big)$ for suitable $I,J \subset \Delta$ and $w \in W$. We will use Corollary \ref{relativeresolution} to get a similar statement for $V^G_B(w)$ which partially generalizes \cite[Theorem 4.6]{OSch}. For $w,v \in W$, we denote by $m(w,v) \in \BZ_{\geq 0}$ the \textit{multiplicity} of $L(v \cdot 0)$ in $M(w \cdot 0)$. It is well known that $m(w,v)>0$ if and only if $ w \leq v$ with respect to the Bruhat order $\leq$ on $W$. The multiplicities can be computed using Kazhdan-Lusztig polynomials (cf. \cite{BB} or \cite{BK}) which is in general only possible in a timely manner with the help of a computer. 

\begin{theorem}\label{multiplicities} \footnote{ 
   We have to assume the following( cf. \cite[Section 5]{OSt}). If the root sytem $\Phi(\bG,\bT)$ has irreducible components of type $B$, $C$, or $F_4$, we assume $p>2$, and if $\Phi(\bG,\bT)$ has irreducible components of type $G_2$, we assume that $p>3$.} Fix $w,v \in W$ and let $I_0:=I(w)$ and $I:=I(v)$, respectively, be as above. For a subset $J \subset \Delta$ with $J \subset I$, 
    let $v_{P_J}^{P_I}$ be the generalized smooth Steinberg representation of $L_{P_I}$. Then, the multiplicity of the irreducible $G$-representation 
    $\CF^G_{P_I}(L(v\cdot \lambda),v_{P_J}^{P_I})$ in $V^G_B(w)$ is 
    $$ \sum_{\substack{w' \in W  \\ \supp(w')=J \cap I_0}} (-1)^{\ell(w')+\lb J \cap I_0 \rb} m(w'w,v)$$ 
    and in this way we obtain all the Jordan-Hölder factors of $V^G_B(w)$. 
\end{theorem}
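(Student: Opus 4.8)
The strategy is to combine the acyclic complex of Corollary \ref{relativeresolution} with the known Jordan--Hölder description of the terms $I^G_{P_K}(w)$ and an Euler-characteristic argument in the Grothendieck group. First I would recall that, by \eqref{twisted} and Proposition \ref{exact}(ii), each $I^G_{P_K}(w)=\CF^G_{P_K}(M_K(w\cdot\lambda))$ with $w\in W^K$, and that the Jordan--Hölder factors of such an induced representation are computed by feeding the composition series of the (ordinary, i.e. $\emptyset$-type) Verma module $M(w\cdot\lambda)$ through $\CF^G_B$; concretely, $[M(w\cdot\lambda)]=\sum_{v\in W} m(w,v)[L(v\cdot\lambda)]$ in the Grothendieck group of $\CO$, and since $\CF^G_B$ is exact (Proposition \ref{exact}(i)) the factors of $\CF^G_{P_K}(M_K(w\cdot\lambda))$ are those $\CF^G_{P_{I(v)}}(L(v\cdot\lambda),v^{P_{I(v)}}_{P_J})$ arising from \cite[Theorem 4.6]{OSch}-type decompositions, but now only for $v$ with $w\le v$ and with the parabolic $P_K$ constraining which smooth Steinberg pieces $v^{P_I}_{P_J}$ survive (one needs $K\subset J$). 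The sign $(-1)^{\ell(w')+\lb J\cap I_0\rb}$ and the sum over $w'$ with $\supp(w')=J\cap I_0$ is exactly the combinatorial bookkeeping that records the alternating sum over $K$ in the resolution of Corollary \ref{relativeresolution}, after one passes from the $P_K$-side back to the Borel via Proposition \ref{exact}(ii).

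Concretely I would argue as follows. Working in the Grothendieck group of $\Rep^{\ell a}_L(G)$ (or rather in a suitable subcategory where these classes make sense, e.g. the category generated by the $\CF^G_P(L(\cdot),\text{sm})$), Corollary \ref{relativeresolution} gives
\begin{equation*}
   [V^G_B(w)] \;=\; \sum_{\emptyset\subset K\subset I_0} (-1)^{\lb K\rb}\,[I^G_{P_K}(w)].
\end{equation*}
Next, expand each $[I^G_{P_K}(w)]$. By Proposition \ref{exact}(ii), $I^G_{P_K}(w)=\CF^G_{P_K}(M_K(w\cdot\lambda))=\CF^G_B(M_K(w\cdot\lambda))$ is not literally induced from $B$, but its class is governed by $[M_K(w\cdot\lambda)]\in K_0(\CO^{\fkp_K})$. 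Using the surjection $q_K:M(w\cdot\lambda)\twoheadrightarrow M_K(w\cdot\lambda)$ of Example \ref{genVM}, whose kernel is the image of $\bigoplus_{\alpha\in K}M(s_\alpha\cdot w\cdot\lambda)$, one gets by inclusion--exclusion
\begin{equation*}
   [M_K(w\cdot\lambda)] \;=\; \sum_{w'\in W_K}(-1)^{\ell(w')}[M(w'w\cdot\lambda)]
\end{equation*}
(the standard BGG-type resolution of the parabolic Verma module; here $w\in W^K$ guarantees $w'w$ is again the minimal-length representative story works out and each $w'w\cdot\lambda$ is dominant enough). Applying $\CF^G_B$ and then $[M(u\cdot\lambda)]=\sum_v m(u,v)[L(v\cdot\lambda)]$, and finally invoking \cite[Theorem 4.6]{OSch} to decompose $\CF^G_B(L(v\cdot\lambda))$ into the irreducibles $\CF^G_{P_{I(v)}}(L(v\cdot\lambda),v^{P_{I(v)}}_{P_J})$, one assembles the triple sum over $K\subset I_0$, $w'\in W_K$, and $v,J$. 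Collecting the coefficient of a fixed $\CF^G_{P_I}(L(v\cdot\lambda),v^{P_I}_{P_J})$ and carrying out the sum over $K$ (which only constrains $J\cap I_0\supset\supp(w')$ and contributes $\sum_{K}(-1)^{\lb K\rb}$ over the relevant $K$) collapses, by a standard inclusion--exclusion identity, to the stated formula $\sum_{\supp(w')=J\cap I_0}(-1)^{\ell(w')+\lb J\cap I_0\rb}m(w'w,v)$. Finally, to see these are \emph{all} the Jordan--Hölder factors (not just that the Euler characteristics match), one uses that $V^G_B(w)$ is a genuine quotient of $I^G_B(w)=\CF^G_B(M(w\cdot\lambda))$ and that all the irreducibles appearing are pairwise non-isomorphic for distinct $(I,v,J)$ by \cite[Theorem 4.6]{OSch} and the linear independence of the relevant classes, so that the Grothendieck-group identity lifts to an honest statement about multiplicities.

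\textbf{Main obstacle.} The delicate point is the passage from a Grothendieck-group identity of Euler characteristics to an actual statement about Jordan--Hölder multiplicities: a priori, the alternating sum could have cancellation that hides factors, and one must check positivity or use an explicit filtration. I expect to handle this by exploiting the exactness of $\CF^G_P$ in both variables together with the explicit sub-quotient structure of the resolution in Corollary \ref{relativeresolution} (so that $V^G_B(w)$ is realized as an honest cokernel of injective maps $p_{J,I}$, cf. \cite[p.~660]{OSch}), reducing everything to the already-established non-negativity and non-vanishing in \cite[Theorem 4.6]{OSch}. A secondary technical nuisance is verifying that $w'w$ genuinely lands in the index set one needs (minimal-length representatives, dominance of $w'w\cdot\lambda$ for the parabolic in play) so that $m(w'w,v)$ is the correct multiplicity; this is a routine but careful check using Lemma \ref{Kostant} and Proposition \ref{parabolicweight}.
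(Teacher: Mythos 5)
Your proposal follows essentially the same route as the paper: use the acyclic resolution of Corollary \ref{relativeresolution} to write the multiplicity as an alternating sum over $K\subset I_0$ of multiplicities in $I^G_{P_K}(w)$, reduce the computation of $[I^G_{P_K}(w):\CF^G_{P_I}(L(v\cdot\lambda),v^{P_I}_{P_J})]$ to $[M_K(w\cdot\lambda):L(v\cdot\lambda)]$ with the constraint $K\subset J\cap I_0$ via \cite[Theorem 4.6]{OSch}, expand with the character/BGG formula $\ch M_K(w\cdot\lambda)=\sum_{w'\in W_K}(-1)^{\ell(w')}\ch M(w'w\cdot\lambda)$, and collapse the inclusion--exclusion over $K$ to the condition $\supp(w')=J\cap I_0$. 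Your closing worry about lifting a Grothendieck-group identity to multiplicities is not actually an obstacle in a finite-length abelian category, and the ``exhaustion'' statement is handled exactly as you suggest, via the surjection from $V^G_B(\lambda)$ (the paper's Lemma \ref{surjection}) together with \cite[Theorem 4.6]{OSch}; the only minor blemish in your write-up is the passing equation $\CF^G_{P_K}(M_K(w\cdot\lambda))=\CF^G_B(M_K(w\cdot\lambda))$, which is not literally true (the two differ by the smooth induction $i^{P_K}_B$, as Proposition \ref{exact}(ii) records), but you already flag this and it does not affect the argument.
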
 

\begin{proof} 
    We only have to slightly modify the proof of \cite[Theorem 4.6]{OSch}.  
    From the resolution for $V^G_B(w)$ by Corollary \ref{relativeresolution}, we obtain the multiplicity 
        $$\big[V_B^G(w): \CF^G_{P_I}(L(v\cdot \lambda),v_{P_J}^{P_I})\big]=\sum_{K \subset I_0}(-1)^{\lb K \rb}\big[I^G_{P_K}(w):\CF^G_{P_I}(L(v\cdot \lambda),v_{P_J}^{P_I})\big]$$
   of the simple object $\CF^G_{P_I}(L(v\cdot \lambda),v_{P_J}^{P_I})$ in $V^G_B(w)$. 
    By the arguments mentioned in loc. cit, it follows that $\big[I^G_{P_K}(w):\CF^G_{P_I}(L(v\cdot \lambda),v_{P_J}^{P_I})\big] \neq 0$ if only if  $K \subset J \cap I_0$. In that case we have 
    $$ \big[I^G_{P_K}(w):\CF^G_{P_I}(L(v\cdot \lambda),v_{P_J}^{P_I})\big] = \big[M_K(w \cdot \lambda): L(v\cdot \lambda)\big].$$
    From the character formula 
        $$\ch(M_K(w\cdot \lambda))= \sum_{w' \in W_K} (-1)^{\ell(w')} \ch(M(w'w\cdot \lambda)),$$ 
    (cf. \cite[Section 9.6, p. 189, Proposition]{H2}), we obtain 
    \begin{align*} \big[V_B^G(w): \CF^G_{P_I}(L(v\cdot \lambda),v_{P_J}^{P_I})\big] &= \sum_{K \subset J \cap I_0}(-1)^{\lb K \rb} \sum_{w' \in W_K} (-1)^{\ell(w')} \big[M(w'w\cdot \lambda):L(v \cdot \lambda)\big] \\
     &= \sum_{w' \in W} (-1)^{\ell(w')}\big[M(w'w\cdot \lambda):L(v \cdot \lambda)\big]  \sum_{\substack{K \subset J\cap I_0 \\ \supp(w') \subset K}} (-1)^{\lb K \rb}.
    \end{align*}
    Finally, we have 
    $$ \sum_{\supp(w') \subset K \subset J\cap I_0} (-1)^{\lb K \rb}=(-1)^{\supp(w')}(1-1)^{\lb (J \cap I_0) \backslash \supp(w')\rb}$$
    which is non-zero if and only if $\supp(w')=J \cap I_0$. Hence, the formula follows. \\

    The natural morphism $V^G_B(\lambda)\rightarrow V^G_B(w)$ is surjective for all $w \in W$ as it is induced by an injective morphism $M(w \cdot \lambda) \rightarrow M(\lambda)$ (cf. Lemma \ref{surjection}).  
    Therefore, \cite[Theorem 4.6]{OSch} implies that we obtain all Jordan-Hölder factors of $V^G_B(w)$ in this manner.  
\end{proof}

\subsection{Period domains}\label{s:perdom}
\setenumerate[0]{leftmargin=5mm,itemsep=\the\smallskipamount}

In this section we give a brief introduction to our central object of study,  $p$-adic period domains (cf. \cite{O1} and \cite{CDHN}). 
For a more general setting and detailed presentation, we refer the reader to \cite{DOR}. \\

Let $F$ be an algebraically closed field of characteristic $p$ and $K_0 = \mathrm{Quot}(W(F))$, the quotient field of the ring of Witt vectors of $F$.
Let $K=\BQ_p$ with algebraic closure $\ov{K}$ and absolute Galois group $\Gamma_K=\Gal(\overline{K}/K)$. We denote by $C$ the $p$-adic completion of 
$\ov{K}$. Moreover, let $\sigma \in \Aut(K_0/K)$ be the Frobenius homomorphism
and $\bG$ a quasi-split connected reductive group over $K$.  

\subsubsection{Filtered isocrystals}
An \textit{isocrystal} over $F$ is a pair $(V, \Phi)$ with a finite-dimensional  $K_0$-vector space $V$ and a $\sigma$-linear bijective endomorphism $\Phi$ of $V$. 
Then, an \textit{isocrystal with $\bG$-structure} (also referred to as a $\bG$-isocrystal) is an exact faithful tensor functor $$ \Rep_K(\bG) \longrightarrow \mathrm{Isoc}(F)$$
from the category of finite-dimensional algebraic $K$-representations to the category of isocrystals over $F$. 
 Every $\bG$-isocrystal is 
induced by an element $b \in \bG(K_0)$. Namely, to a finite-dimensional algebraic $K$-representation $(V, \rho)$ of $\bG$, we associate 
$$ N_b(V):=\big(V \otimes_K K_0, \rho(b)(\id_V \otimes\sigma )\big)$$
which defines an isocrystal over $F$  (cf. \cite[Remark  3.4]{RR}). The morphisms are mapped under $N_b$ as expected. Thus, $N_b$ is a $\bG$-isocrystal and 
$b,b' \in \bG(K_0)$ yield the same $\bG$-isocrystal if and only if there exists a $g \in \bG(K_0)$ 
such that $b'=gb\sigma(g)^{-1}$, i.e. if they are $\sigma$-conjugated. The set of $\sigma$-equivalence classes $[b]$ in $\bG(K_0)$ 
is denoted by $B(\bG)$ and was introduced by Kottwitz \cite{Ko1, Ko2}. In \cite[Section 3]{CDHN} the authors give several interpretations of this set. 
Additionally, the $\bG$-isocrystal $N_b$ comes along with its automorphism group $\bJ_b$. It is an algebraic group over $K$ with 
$$\bJ_b(A)=\{g \in \bG(K_0 \otimes_K A) \mid gb\sigma(g)^{-1}=b \}$$ 
for every $K$-algebra A. It depends only on $[b]$ in view of \cite[Section 2.1, p. 280]{RV}. 
As $\bG$ is quasi-split, we know by \cite[Section 6]{Ko1} that $\bJ_b$ is an inner form of a Levi subgroup of $\bG$; hence $\bJ_b$ is reductive. \\

Let $L$ be a field extension of $K_0$. A \textit{filtered isocrystal} $(V, \Phi, \CF^\bullet)$ over $L$ is an isocrystal $(V, \Phi)$ 
over $F$ with a $\BQ$-filtration $\CF^{\bullet}$ (decreasing, exhaustive and separated) on $V_L$. 
The filtered isocrystal over $L$ form a $K$-linear quasi-abelian tensor category $\mathrm{FilIsoc}^L_{F}(\Phi)$ (cf. \cite[Section VIII, p. 192]{DOR}).
Then, we say that a filtered isocrystal $(V,\Phi, \CF^\bullet)$ over $L$ is \textit{weakly admissible} if the inequality 
$$ \sum_i i\dim gr^i_{\CF^\bullet(V)}(N' \otimes_{K_0}L) \leq \ord_p \,\det(\Phi|N')$$
holds for every subisocrystal $N'$ of $(V,\Phi)$ and with equality in case $N'=(V, \Phi)$.\\

Any $1$-PS $\lambda:\BG_m \longrightarrow \bG_L$ defined over $L$ induces a $\mathbb{Z}$-graded $L$-vector space $$V_L=\bigoplus_{i \in \BZ} V_i^{\lambda}$$
for a finite-dimensional algebraic $K$-representation $(V,\rho)$, 
where the grading comes from the weight spaces $V_i^\lambda=\{v \in V_L \mid \rho(\lambda(s))v=s^iv \}$.
Thus, we naturally have a decreasing exhaustive separated $\BZ$-filtration $\CF_\lambda^\bullet(V)$ on $V_L$ given by
$$\CF^i_\lambda(V)=\bigoplus_{j \geq i} V_j^\lambda.$$ Therefore, a tuple $(b, \lambda ) \in \bG(K_0) \times X_*(\bG_L)$ defines a tensor functor 
\begin{align*}
    \Rep_K(\bG) &\longrightarrow \mathrm{FilIso}^L_{F}(\Phi)\\
    (V, \rho) &\mapsto (N_b(V),\CF_\lambda^{\bullet}(V)).
\end{align*}
Such a pair $(b, \lambda)$ is \textit{weakly admissible} if $(N_b(V),\CF_\lambda^{\bullet}(V))$ is weakly admissible for all $(V, \rho) \in \Rep_K(\bG)$.

\subsubsection{Parameterization of weakly admissible filtrations on isocrystals}

In the following, we fix together with $\bG$ an element $b \in \bG(K_0)$ and a conjugacy class $\{\mu\} \subset X_*(\bG)$ over $\ov K$. \\

The conjugacy class $\{\mu\}$ defines the Shimura field $E:=E(\bG, \{\mu\}) \subset \ov K$. It is the fixed subfield of $\ov K$
under the stabilizer $\Gamma_\mu$ of $\{\mu \}$ in $\Gamma_K $ and is a finite extension of $K$. 
As $\bG$ is quasi-split, $\{\mu \}$ contains an element $\mu$ defined over $E$ by \cite[Lemma 1.1.3]{Ko3}. Therefore, the associated flag variety $\sF:=\sF(\bG, \{\mu\})$, defined over $E$, can be identified as$$
\sF=\bG_E/\bP(\mu).$$ 
Let us point out that the $\ov K$-valued points of $\sF$ are given by $$\{\mu\}/\sim$$ 
where $\sim$ is the par-equivalence relation explained in \cite[Section 4.1.2]{CDHN}, 
which identifies the elements of $\{\mu\}$ defining the same filtration on $\Rep_K(\bG)$. Hence, for a field extension $L$ of $E$, 
a point $x \in \sF(L)$ gives rise to a cocharacter $\mu_x \in \{\mu \}$ defined over $L$ up to par-equivalence (cf. \cite[Remark 6.1.6]{DOR} for more details).  \\

Setting $\br E:=E K_0$, we write $\br \sF$ for the adic analytification of $\sF_{\br E}$. According to \cite[Proposition 1.36\,i)]{RZ}, the set $\sF^{\wa}_b:=\sF(\bG, \{\mu \}, b)^{\wa}$ of weakly admissible filtrations with respect to $b$ in $\sF$, i.e. 
$$\sF^{\wa}_b(L)=\{x \in  \sF(L) \mid (b,\mu_x) \text{ weakly admissible} \}$$
for any field extension $L$ of $\br E$, has the structure of a partially proper open subset of $\br \sF$.
The space $\sF^{\wa}_b$ is the \textit{period domain} attached to the triple $(\bG, \{\mu\}, b)$. 
First, we note that $\sF^{\wa}_b$ only depends on $[b] \in B(\bG)$. Secondly, the natural 
action of $\bJ_b(K) \subset \bG(K_0)$ on $\br \sF$ restricts to an action on $\sF_b^{\wa}$ (cf. \cite[1.35 and 1.36\,i)]{RZ}). In the case that $b$ is $s$-decent\footnote{There exists a positive integer $s$ such that $sv_b$ factors through the quotient $\BG_{m,K_0}$  of $\BD_{K_0}$ and the equality $(b\sigma)^s=(s\nu_b)(p)\sigma^s$ holds in $\bG(K_0) \rtimes  \sigma^{\BZ}$. In our case, there exists always such an $s$ (cf. \cite[Remark 9.1.34]{DOR}).}, we can regard  $\sF^{\wa}_b$ as a partially proper open subset defined over $E_s:=E\BQ_{p^s}$ (cf. \cite[Proposition 1.36\,ii)]{RZ}). 

\subsubsection{Existence of weakly admissible filtrations}\label{ss:existence}  

Let $\BD$ be the algebraic pro-torus over $K$ with character group $\BQ$. 
Kottwitz showed in \cite[Section 4]{Ko1} that there exists for $b \in \bG(K_0)$ a unique morphism $$\nu_b: \BD_{K_0} \longrightarrow \bG_{K_0}$$ which, by the Tannakian formalism, induces the tensor functor 
\begin{align*}
\Rep_K(\bG) &\longrightarrow \mathrm{Grad}(\mathrm{Vec}_{K_0}, \BQ) \\
(V, \rho) &\mapsto \bigoplus_{i \in \BQ} V_i 
\end{align*}
from $\Rep_K(\bG)$ to the category of $\BQ$-graded ${K_0}$-vector spaces, where the grading comes from the slope decomposition of $N_b$. That is the \textit{slope homomorphism}. It has the property that $\nu_b=b\sigma(\nu_b)b^{-1}$ 
and thus, we have a well-defined map 
\begin{align*}
    B(\bG) &\longrightarrow [\Hom_{K_0}(\BD_{K_0}, \bG_{K_0})/\Int(\bG(K_0))]^{\sigma=1}, \\
    [b] &\mapsto [v_b]
\end{align*}
the so called \textit{Newton map}. Let $\bB$ be a Borel subgroup in $\bG$ and $\bT$ a maximal torus contained in 
$\bB$. Further, let $X_*(\bT)_{\BQ}^+$ be the set of dominant rational cocharacters of $\bT$ with respect to $\bB$. The chosen $\bB$ induces a partial order $\leq$ on $X_*(\bT)_\BQ$ where $ \lambda' \leq \lambda$ if and only if 
$\lambda-\lambda'=\sum_{\alpha \in \Delta} n_\alpha \alpha^{\vee} $ with all $n_\alpha \in \BQ_{\geq 0}$. According to \cite[Section 2.1/2.2]{RV} (cf. \cite[Remark 3.3]{CDHN}), there is a unique $\mu \in \{\mu\}$ and a  unique representative $\nu_{[b]} \in [\nu_b]$ for $[b] \in B(\bG)$ lying in $X_*(\bT)_{\BQ}^+$.  \\

Rapoport and Viehmann associated in \cite[Definition 2.3]{RV}  the \textit{set of acceptable elements} to a conjugacy class $\{\mu\}$ by setting
$$A(\bG, \{\mu\}):=\{ [b] \in B(\bG) \mid v_{[b]} \leq \overline{\mu} \} $$ 
where $\overline{\mu} := \frac{1}{[\Gamma_K:\Gamma_{\mu}]} \sum_{\tau \in \Gamma_K/\Gamma_\mu} \tau(\mu) \in X_*(\bT)_\BQ^+$. 
We remark that this set is non-empty and finite by \cite[Lemma 2.5]{RV}. Finally, one obtains the following result. 

\begin{theorem}{\cite[Theorem 9.5.10]{DOR}} \label{necessary} The period domain $\sF(G,\{\mu \}, b)^{\wa}$ is non-empty if and only if $[b] \in A(\bG, \{\mu \})$. 
\end{theorem}

\setenumerate[0]{leftmargin=15mm,itemsep=\the\smallskipamount}
\section{Cohomological computations}\label{s:CoPerDom}
\subsection{Setup} \label{s:Setup}
Let $K=\BQ_p$. In order to apply the results of \cite{OSch, OSt} and ideas of \cite{MR}, we consider $\sF^{\wa}:=\sF^{\wa}(\bG, \{\mu \}, 1)$ with 
 $\bG$ a split connected reductive group over $K$ and $\{\mu\} \subset X_*(\bG)$ such that $\bB:=\bP(\mu)$ is a Borel subgroup of $\bG_E$. \\

This entails a lot of simplifications. Notice first that $1 \in [1]$ is $1$-decent as $\nu:=\nu_1$ is trivial. Furthermore, $\bJ:=\bJ_1=\bG$ by \cite[Remark 9.5.9]{DOR} and $E=K$ since the action of $\Gamma_{K}$ is trivial on $\{\mu\}$. Hence, $\sF$ and $\sF^{\wa}$ are defined over $K$. \\

We set $n:=\dim \sF$ and choose a uniformizer $\pi$ of $K$. Further, we fix an $\inva$ on $\bG$ (cf. section \ref{s:rootdatum}).
We choose a split maximal torus $\bT$ of $\bG$ of rank $d$ such that $\mu \in X_*(\bT)_\BQ$. 
Since all Borel subgroups over $K$ of $\bG$ are $\bG(K)$-conjugated (cf. \cite[Theorem 20.9]{Bor}), we can assume that $(\bT,\bB)$ is a Borel pair (cf. section \ref{s:rootdatum}). 
 This gives rise to a set of simple roots (cf. section \ref{s:rootdatum})
    $$\Delta:=\{\alpha_1, \ldots , \alpha_d\} \subset X^*(\bT)_\BQ.$$
After conjugating $\mu$ with an element of $W$, if necessary, we can assume that $\mu$ lies in the positive Weyl chamber with respect to $\bB$, i.e. 
\begin{equation}\label{positiveChamber}
\langle \alpha, \mu \rangle >0
\end{equation}
for all $\alpha \in \Delta$ (here we used that $\bP(\mu)=\bB$ to get $>$).  
Notice that since $\bG$ is split over $K$ we have $\Gamma_\mu=\Gamma_K$, so $\ov\mu=\mu$. By Theorem \ref{necessary}, we assume that $[1] \in A(\bG, \{\mu\})$, i.e. 
\begin{equation}\label{existence}
\mu=\mu-\nu=\sum_{\alpha \in \Delta} n_\alpha \alpha^{\vee}
\end{equation}
with $n_\alpha \in \BQ_{\geq 0}$. 

\begin{lemma}\label{n_a} For $\mu= \sum_{\alpha \in \Delta} n_\alpha \alpha^{\vee}$, we have $n_\alpha \in \BQ_{> 0}.$ 
\end{lemma}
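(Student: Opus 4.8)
The plan is to invert the linear relation between the coefficients $n_\alpha$ and the numbers $\langle \alpha, \mu \rangle$, using that the inverse Cartan matrix has only positive entries (Lemma \ref{inversecartan}). First I would record that $\mu$ lies in $X_*(\bT_\der)_\BQ$: by (\ref{existence}) it is a $\BQ$-linear combination of the coroots $\alpha^\vee$ for $\alpha \in \Delta$, and these belong to $X_*(\bT_\der)$ by (\ref{derivedtorus}). As in the proof of Lemma \ref{equivalentcond}, the natural pairing $X^*(\bT)_\BQ \times X_*(\bT)_\BQ \to \BQ$ restricts, on the subspace $X_*(\bT_\der)_\BQ$, to the pairing $\langle \text{ , } \rangle_\der$ attached to the split pair $(\bG_\der, \bT_\der)$. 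Hence the hypothesis (\ref{positiveChamber}) says precisely that $\langle \alpha, \mu \rangle_\der > 0$ for every $\alpha \in \Delta$, the strictness being exactly where $\bP(\mu) = \bB$ enters.

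Next I would reduce to the case that $\Phi$ is irreducible. Decomposing $\Phi = \bigsqcup_k \Phi_k$ into irreducible components and $\Delta = \bigsqcup_k \Delta_k$ accordingly, one has $\langle \alpha, \beta^\vee \rangle_\der = 0$ whenever $\alpha$ and $\beta$ lie in different components. Thus, setting $\mu_k := \sum_{\beta \in \Delta_k} n_\beta \beta^\vee$, one gets $\langle \alpha, \mu \rangle_\der = \langle \alpha, \mu_k \rangle_\der$ for $\alpha \in \Delta_k$, so both the hypothesis and the desired conclusion split over the components, and it suffices to treat a single irreducible $\Phi$.

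Assume now $\Phi$ irreducible and fix the ordering $\Delta = \{\alpha_1 < \dots < \alpha_r\}$ defining the Cartan matrix $C$ of (\ref{cartanmatrix}), so $C_{ji} = \langle \alpha_i, \alpha_j^\vee \rangle_\der$. For each $j$,
\[
   \langle \alpha_j, \mu \rangle_\der \;=\; \sum_i n_{\alpha_i}\,\langle \alpha_j, \alpha_i^\vee \rangle_\der \;=\; \sum_i C_{ij}\, n_{\alpha_i},
\]
so the column vector $\big(\langle \alpha_j, \mu \rangle_\der\big)_j$ equals $C^{\mathrm{t}}$ applied to the column vector $(n_{\alpha_i})_i$; inverting gives $(n_{\alpha_i})_i = (C^{-1})^{\mathrm{t}}\big(\langle \alpha_j, \mu \rangle_\der\big)_j$. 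By Lemma \ref{inversecartan} every entry of $C^{-1}$, hence of $(C^{-1})^{\mathrm{t}}$, is a positive rational number, and every entry of $\big(\langle \alpha_j, \mu \rangle_\der\big)_j$ is positive by the first paragraph; therefore each $n_{\alpha_i}$ is a positive rational number, which is the claim. (Alternatively one could argue through the fundamental weights: $\check{\varpi}_\beta = \sum_\alpha (C^{-1})_{\alpha\beta}\alpha$ has strictly positive coefficients by Lemma \ref{inversecartan}, so $\langle \check{\varpi}_\beta, \mu \rangle_\der > 0$, and Lemma \ref{equivalentcond} together with the computation in its proof turns this into $n_\beta > 0$.)

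This is routine linear algebra once Lemma \ref{inversecartan} is in hand; the only two points needing a little care are the identification of $\langle \text{ , } \rangle$ with $\langle \text{ , } \rangle_\der$ on the span of the coroots, and the reduction to the irreducible case, since Lemma \ref{inversecartan} is stated only under that hypothesis.
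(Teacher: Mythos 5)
Your proof is correct, but it takes a genuinely different route from the paper's. The paper's argument is a one-line estimate: starting from $0 < \langle \beta, \mu \rangle = 2n_\beta + \sum_{\alpha \neq \beta} n_\alpha \langle \beta, \alpha^\vee \rangle$, it uses $\langle \beta, \alpha^\vee \rangle \leq 0$ for $\alpha \neq \beta$ (Lemma~\ref{rootrel}) together with the \emph{non-negativity} $n_\alpha \geq 0$ coming from the acceptability hypothesis~(\ref{existence}) to drop the cross terms and conclude $0 < \langle \beta, \mu \rangle \leq 2n_\beta$. Your proof instead inverts the relation $(\langle \alpha_j, \mu \rangle_\der)_j = C^{\mathrm t}(n_{\alpha_i})_i$ and appeals to the positivity of the inverse Cartan matrix (Lemma~\ref{inversecartan}), which requires the extra reduction to the irreducible case and the identification of $\langle\,,\,\rangle$ with $\langle\,,\,\rangle_\der$ on the coroot lattice that you carefully spell out. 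Each approach has a tradeoff: the paper's is more elementary (it needs only the sign of off-diagonal Cartan entries, not the structure of $C^{-1}$), but it genuinely needs the a priori bound $n_\alpha \geq 0$ from~(\ref{existence}); yours invokes the heavier Lemma~\ref{inversecartan} but in exchange never uses that bound, so it establishes the slightly stronger statement that $\langle \alpha, \mu \rangle > 0$ for all $\alpha \in \Delta$ forces $n_\alpha > 0$ for \emph{any} rational combination $\mu = \sum n_\alpha \alpha^\vee$. Both are valid; the paper's is shorter, yours is more self-contained in what it assumes about $\mu$.
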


\begin{proof} 
    Let $\beta \in \Delta$ and consider 
    $$0 < \langle  \beta, \mu \rangle = \sum_{\alpha \in \Delta} n_\alpha \langle \beta , \alpha^{\vee} \rangle = 2n_\beta+   \sum_{\alpha \in \Delta\backslash \{\beta\} } n_\alpha \langle \beta, \alpha^{\vee} \rangle  \leq 2n_\beta$$
    where we used that $ \langle \beta ,  \alpha^{\vee}  \rangle \leq 0$ for all simple roots $\alpha \neq \beta$ (cf. Lemma \ref{rootrel}). 
\end{proof} 
For a weight $\lambda \in X^{*}(\bT)$, let $\CL_\lambda$ be the sheaf on $\sF$ with 
\begin{equation}\label{linebundle}
    \CL_\lambda(U)=\Big\{f \in \CO_{\bG}(\pi^{-1}(U)) \mid f(gb)=-\lambda(b)f(g) \text{ for all } g \in \bG(\ov{K}), b \in \bB(\ov{K} )\Big\}
\end{equation}
for $U \subset \sF$ open (cf. \cite[Part I, 5.8]{J}; note the sign in the definition). 
Here,  $\pi:\bG\rightarrow \sF$ is the natural projection. It is a locally free sheaf of rank 1 (cf. \cite[Part II, 4.1]{J}). For example, $\CL_{2\rho}=\omega_{\sF}$.  We fix a dominant $\lambda \in X^*(\bT)^+$ and set $\CE_\lambda :=\CL_\lambda \otimes \omega_{\sF}.$ 
\subsection{Geometrical properties of the complement of $\sF^{\wa}$}\label{s:GeoProp}  Following \cite[Section 3, p. 536]{O1}, each $\tau \in X_*(\bG)_\BQ$ defines a closed subvariety of $\sF$ by setting $Y_\tau:=\{x \in \sF \mid \mu^{\sL}(x, \tau)<0 \}.$ Here $\mu^{\sL}(\, , \,)$ is the slope function (cf. \cite[Definition 2.2]{M}). Then, for $I \subsetneq \Delta$, we set
\begin{equation*}
    Y_I:= \bigcap_{\alpha \not\in I} Y_{\varpi_\alpha}
\end{equation*}
which is again a closed subvariety of $\sF$.

\begin{lemma}\cite[Lemma 3.1]{O1} Let $I \subsetneq \Delta$. The variety $Y_I$ is defined over $K$. The natural action of $\bG(K)$  on $\sF$ restricts to an action of $\bP_I(K)$ on $Y_I$.
\end{lemma}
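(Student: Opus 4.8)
The statement to establish is that $Y_I=\bigcap_{\alpha\notin I}Y_{\varpi_\alpha}$ is defined over $K$ and that the $\bG(K)$-action on $\sF$ restricts to a $\bP_I(K)$-action on $Y_I$. Since $Y_I$ is a finite intersection of the subvarieties $Y_{\varpi_\alpha}$, and intersections of $K$-subvarieties are again $K$-subvarieties, the first assertion reduces immediately to the corresponding statement for each single $Y_{\varpi_\alpha}$, which is exactly \cite[Lemma~3.1]{O1} (cited above); so I would simply invoke that citation for the ``defined over $K$'' part and for the fact that $\bP_{\Delta\backslash\{\alpha\}}(K)=\bP(\varpi_\alpha)(K)$ stabilizes $Y_{\varpi_\alpha}$.

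For the group-action statement the plan is to combine two inputs. First, recall from subsection~\ref{s:rootdatum} (the discussion following \eqref{dualbase}) the identity $\bP_I=\bigcap_{\alpha\notin I}\bP_{\Delta\backslash\{\alpha\}}=\bigcap_{\alpha\notin I}\bP(\varpi_\alpha)$, hence $\bP_I(K)=\bigcap_{\alpha\notin I}\bP(\varpi_\alpha)(K)$. Second, by \cite[Lemma~3.1]{O1} each $\bP(\varpi_\alpha)(K)=\bP_{\Delta\backslash\{\alpha\}}(K)$ acts on $Y_{\varpi_\alpha}$. Now take $g\in\bP_I(K)$ and $x\in Y_I$. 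For each $\alpha\notin I$ we have $g\in\bP(\varpi_\alpha)(K)$ and $x\in Y_{\varpi_\alpha}$, so $g\cdot x\in Y_{\varpi_\alpha}$; since this holds for every $\alpha\notin I$, we get $g\cdot x\in\bigcap_{\alpha\notin I}Y_{\varpi_\alpha}=Y_I$. Thus the $\bG(K)$-action restricts to a $\bP_I(K)$-action on $Y_I$, which is the claim.

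The only point that needs a word of care — and the one I'd expect to be the ``main obstacle'', though it is minor — is checking that the displayed intersection formula for $\bP_I$ is compatible with taking $K$-points, i.e. that $\bigl(\bigcap_\alpha \bP(\varpi_\alpha)\bigr)(K)=\bigcap_\alpha \bP(\varpi_\alpha)(K)$; this is automatic since scheme-theoretic intersection commutes with the functor of points. One should also note that each $\varpi_\alpha$ lies in $X_*(\bT_\der)_\BQ\subset X_*(\bT)_\BQ$ and is fixed by $\Gamma_K$ (as $\bG$ is split), so $\bP(\varpi_\alpha)$ is indeed defined over $K$, making $Y_{\varpi_\alpha}$ and hence $Y_I$ $K$-subvarieties; everything else is a direct unwinding of the definitions. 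I would therefore keep the proof to three or four lines: reduce to \cite[Lemma~3.1]{O1} via the intersection description of $\bP_I$, then verify stability of $Y_I$ under $\bP_I(K)$ by the intersection argument above.
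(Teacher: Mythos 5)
The lemma you are asked to prove carries a citation to \cite[Lemma 3.1]{O1} and is given no proof in the paper itself, so there is no ``paper's own proof'' to compare your argument against; the author simply imports the result.

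Taking your proposal on its own merits: the reduction to a single $Y_{\varpi_\alpha}$ via the description $Y_I=\bigcap_{\alpha\notin I}Y_{\varpi_\alpha}$ and $\bP_I=\bigcap_{\alpha\notin I}\bP(\varpi_\alpha)$ is correct, as is the observation that intersections of $K$-subvarieties are $K$-subvarieties and that taking $K$-points commutes with finite intersections. However, your argument delegates the entire substantive base case to \cite[Lemma 3.1]{O1} --- the very same reference attached to the lemma you are proving. If that lemma is stated, as here, for general $I$, then your appeal to it is circular. The content that would actually have to be established from scratch concerns the slope function $\mu^{\sL}$: that for a one-parameter subgroup $\tau$ defined over $K$ and $\Gamma_K$-fixed, the locus $Y_\tau=\{x\mid \mu^{\sL}(x,\tau)<0\}$ is a Zariski closed $K$-subvariety (Galois equivariance of $\mu^{\sL}$), and that $\bP(\tau)(K)$ preserves $Y_\tau$ because $\mu^{\sL}(x,\tau)=\mu^{\sL}(x,g\tau g^{-1})$ for $g\in\bP(\tau)$ (a standard property of instability functions in GIT, cf.\ Mumford/Kempf). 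Your proposal names the reduction but does not supply these two facts, which are exactly where the work lies; as written the proof has a gap unless you can confirm that the cited source proves the single-$\tau$ version of the statement independently.
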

Let $Y:=\sF^{\ad} \backslash \sF^{\wa}$. For $I \subsetneq \Delta$ and any subset $W \subset \bG/\bP_I(K)$, we set
$$Z_I^W:=\bigcup_{g \in W} gY_I^{\ad}$$ 
which, in view of the previous lemma, is indeed well-defined. 
\begin{lemma}\cite[Lemma 3.2]{O1} The subset $Z_I^W$ is a closed pseudo-adic subspace of $\sF^{\ad}$ for every compact open subset $W \subset \bG/\bP_I(K)$.
\end{lemma}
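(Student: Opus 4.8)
The plan is to separate the statement into two parts: the only real content is that $Z_I^W$ is a \emph{closed} subset of $\sF^{\ad}$, and the remaining ``pseudo-adic'' bookkeeping is then formal. For the formal part I would argue that, since $\sF$ is proper over $K$, the adic space $\sF^{\ad}$ is quasi-compact and quasi-separated, so its underlying topological space is spectral. In a spectral space every open set is a union of quasi-compact opens, hence every closed subset is an intersection of complements of quasi-compact opens, i.e.\ pro-constructible; moreover a closed set is stable under specialization, hence convex in the sense of Huber. Therefore $(\sF^{\ad},Z)$ is a closed pseudo-adic subspace of $\sF^{\ad}$ for \emph{any} closed $Z\subseteq\sF^{\ad}$, and it remains only to prove that $Z_I^W$ is closed.

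To prepare for this, I would first record that $Y_I^{\ad}$ is a closed subset of $\sF^{\ad}$ because $Y_I\hookrightarrow\sF$ is a closed immersion of $K$-varieties, that each $g\in\bG(K)$ acts on $\sF^{\ad}$ by an automorphism of adic spaces, and that $gY_I^{\ad}$ depends only on the coset $g\bP_I(K)$ by $\bP_I(K)$-invariance of $Y_I$ (cf.\ the preceding lemma). Next I would replace $W$ by a compact open lift in $\bG(K)$: writing $q\colon\bG(K)\to\bG(K)/\bP_I(K)$ for the projection, the subset $W$ is open and quasi-compact, hence closed in the Hausdorff space $\bG(K)/\bP_I(K)$, so $q^{-1}(W)$ is clopen in $\bG(K)$. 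Covering $W$ by finitely many sets $q(U_1),\dots,q(U_N)$ with the $U_j\subseteq\bG(K)$ compact open and putting $\widetilde W:=q^{-1}(W)\cap\bigcup_j U_j$ yields a compact open $\widetilde W\subseteq\bG(K)$ with $q(\widetilde W)=W$, so that $Z_I^W=\bigcup_{g\in\widetilde W}gY_I^{\ad}$.

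The key step is then a tube lemma argument. I would consider the map $\psi\colon\widetilde W\times\sF^{\ad}\to\sF^{\ad}$, $(g,x)\mapsto g^{-1}x$, which is continuous because the $\bG(K)$-action on $\sF^{\ad}$ is continuous, and set $A:=\psi^{-1}(Y_I^{\ad})$. Since $Y_I^{\ad}$ is closed, $A$ is closed in $\widetilde W\times\sF^{\ad}$, and by construction $Z_I^W=\mathrm{pr}_2(A)$, where $\mathrm{pr}_2\colon\widetilde W\times\sF^{\ad}\to\sF^{\ad}$ is the second projection. As $\widetilde W$ is quasi-compact, $\mathrm{pr}_2$ is a closed map (this is exactly the tube lemma: the projection along a quasi-compact factor is closed, for arbitrary topological spaces), so $Z_I^W=\mathrm{pr}_2(A)$ is closed in $\sF^{\ad}$, and we are done.

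I do not expect a deep obstacle here. The two points that need genuine care are: correctly invoking that a qcqs adic space is spectral, so that ``closed'' automatically upgrades to ``closed pseudo-adic subspace'' (this is what lets one avoid checking pro-constructibility of the genuinely infinite union $\bigcup_{g\in\widetilde W}gY_I^{\ad}$ by hand); and the elementary but slightly fiddly reduction from the compact open $W\subseteq\bG(K)/\bP_I(K)$ to a compact open lift $\widetilde W\subseteq\bG(K)$, after which $Z_I^W$ is visibly the image of a closed set under a projection along a quasi-compact factor.
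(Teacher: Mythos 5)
Your overall architecture is sound and, I believe, matches the spirit of Orlik's argument, but you have quietly off-loaded the entire content of the lemma onto a single unproved sentence. Specifically, the assertion that $\psi\colon \widetilde W\times\sF^{\ad}\to\sF^{\ad}$, $(g,x)\mapsto g^{-1}x$, ``is continuous because the $\bG(K)$-action on $\sF^{\ad}$ is continuous'' is exactly where the work lies. What is immediate is only that each individual $g\in G$ acts by a homeomorphism of $|\sF^{\ad}|$; the \emph{joint} continuity of $G\times|\sF^{\ad}|\to|\sF^{\ad}|$ is a genuinely nontrivial claim about a non-Hausdorff spectral space acted on by a $p$-adic Lie group, and it does not follow from the algebraic action $\bG\times\sF\to\sF$ by a formal argument, because taking underlying topological spaces does not commute with fibre products of adic spaces (so one cannot simply restrict $|(\bG\times_K\sF)^{\ad}|\to|\sF^{\ad}|$ to $G\times|\sF^{\ad}|$). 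The claim is true, but it requires either a direct argument or a precise citation, and without it the tube-lemma step is unsupported. Once joint continuity is in hand, the rest of your argument is correct: the tube lemma holds for a quasi-compact factor with no separation hypotheses, the reduction of $W\subset G/P_I$ to a compact open lift $\widetilde W\subset G$ with $q(\widetilde W)=W$ is fine (using that $q$ is an open surjection onto the compact space $G/P_I$ and that $G$ has a basis of compact opens), and the ``bookkeeping'' reduction from ``closed pseudo-adic subspace'' to ``closed subset'' is exactly right — $\sF^{\ad}$ is spectral since $\sF$ is proper, closed subsets of a spectral space are pro-constructible (patch-closed) and specialization-stable, hence convex.

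Two remarks on how to close the gap. First, one can argue more algebraically and sidestep topological continuity of the group action entirely: since $Y_I$ is $\bP_I$-stable and closed in $\sF$, the associated bundle $\bG\times^{\bP_I}Y_I$ is a closed subscheme $\widetilde Y_I$ of $\bG/\bP_I\times_K\sF$; the second projection is proper (as $\bG/\bP_I$ is proper over $K$), hence $(\mathrm{pr}_2)^{\ad}$ is a closed map of adic spaces, and $Z_I^W$ is obtained by intersecting $\widetilde Y_I^{\ad}$ with the preimage of a suitable quasi-compact subset of $(\bG/\bP_I)^{\ad}$ cut out by $W$ (using that $W$ is a union of residue classes for an integral model) and pushing forward. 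This is closer in flavour to the way the rest of the paper handles translates $gY_I(\epsilon_m)$ via integral models and reduction maps, and it localizes the difficulty to the concrete statement that a compact open $W\subset(\bG/\bP_I)(K)$ is the trace of a quasi-compact open of $(\bG/\bP_I)^{\ad}$ — which is checkable by the same reduction-mod-$\pi^m$ computation used later for $Y_I(\epsilon_m)$. Second, if you prefer to keep the topological route, the right move is to reduce to a compact open \emph{subgroup}: write $W$ as a finite disjoint union of translates $g_iH/P_I$ for a compact open subgroup $H\subset G$, so that $Z_I^W=\bigcup_i g_i\big(H\cdot Y_I^{\ad}\big)$ is a finite union of translates of $H\cdot Y_I^{\ad}$; then show $H\cdot Y_I^{\ad}$ is closed by expressing it via an integral model as a ``tube'' — this is the computation the present paper carries out, in the lemma showing $Y_I(\epsilon_m)$ is $P_I^m$-invariant, and it avoids invoking joint continuity on $\sF^{\ad}$ as a black box.
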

Then, by \cite[Corollary 2.4]{O1}, we have the following identification
$$Y=\bigcup_{\substack{I \subset \Delta \\ \lvert \Delta \backslash I \rvert =1}} Z_I^{\bG/\bP_I(K)}.$$ 
For an alternative description of the $Y_I$, which will be important hereinafter, we set 
 \begin{equation}\label{omegaI}
    \Omega_I:=\{w \in W \mid (w\mu, \varpi_\alpha)> 0 \text{ for all } \alpha \not\in I  \}
 \end{equation} 
 for $I \subsetneq \Delta$ (cf. \cite[p. 530]{O1}). Reformulating Lemma \ref{equivalentcond}, we get the following statement. 
 \begin{lemma}\label{fweightandpairing} 
Let $I \subsetneq \Delta $. Then, $w \in \Omega_I$ if and only if $\langle \check{\varpi}_\alpha, w\mu\rangle_\der >0$ for all $\alpha \not\in I$. 
 \end{lemma}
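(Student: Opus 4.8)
The statement is a direct consequence of Lemma~\ref{equivalentcond}, once we check that the hypotheses of that lemma apply to $w\mu$. So the plan is: first observe that $w\mu$ lies in $X_*(\bT_\der)_\BQ$, then invoke Lemma~\ref{equivalentcond} with $w\mu$ in place of $\mu$ and $\beta=\alpha$ for each $\alpha\notin I$, and finally take the conjunction over all such $\alpha$.

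For the first step, recall from~(\ref{existence}) (together with Lemma~\ref{n_a}) that $\mu=\sum_{\alpha\in\Delta}n_\alpha\alpha^{\vee}$ with $n_\alpha\in\BQ$, so $\mu\in X_*(\bT_\der)_\BQ$. Since $W$ acts on $X_*(\bT)_\BQ$ by permuting the coroot system $\Phi^\vee$, we have $w\alpha^{\vee}\in\Phi^\vee\subset X_*(\bT_\der)_\BQ$ for every $\alpha$, and hence
$$
   w\mu=\sum_{\alpha\in\Delta}n_\alpha\,(w\alpha^{\vee})\in X_*(\bT_\der)_\BQ .
$$
In particular $w\mu$ is again a $\BQ$-linear combination of coroots, which is exactly the form required to feed into Lemma~\ref{equivalentcond}.

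For the second step, fix $\alpha\notin I$. Applying Lemma~\ref{equivalentcond} to the element $w\mu\in X_*(\bT_\der)_\BQ$ and the simple root $\beta=\alpha$ gives
$$
   (w\mu,\varpi_\alpha)>0 \quad\Longleftrightarrow\quad \langle\check{\varpi}_\alpha, w\mu\rangle_\der>0 .
$$
By definition~(\ref{omegaI}), $w\in\Omega_I$ means precisely that $(w\mu,\varpi_\alpha)>0$ for all $\alpha\notin I$; by the displayed equivalence this holds if and only if $\langle\check{\varpi}_\alpha,w\mu\rangle_\der>0$ for all $\alpha\notin I$, which is the claim.

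I do not anticipate any genuine obstacle here: the only point requiring a word of justification is that $w\mu$ still belongs to the $\BQ$-span of the coroots, so that Lemma~\ref{equivalentcond} is applicable, and this is immediate from the $W$-stability of $\Phi^\vee$. Everything else is bookkeeping over the set $\Delta\setminus I$.
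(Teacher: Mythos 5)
Your proposal is correct and takes the same route as the paper, which simply says ``reformulating Lemma~\ref{equivalentcond}''; you have just made the one-line reformulation explicit by observing that $w\mu\in X_*(\bT_\der)_\BQ$. One tiny point of phrasing: what Lemma~\ref{equivalentcond} requires is that $w\mu$ be a $\BQ$-linear combination of \emph{simple} coroots, which holds here because $\{\alpha^\vee\}_{\alpha\in\Delta}$ is a basis of $X_*(\bT_\der)_\BQ$, not merely because $w\mu$ is a combination of arbitrary elements of $\Phi^\vee$; your conclusion is nevertheless correct since membership in $X_*(\bT_\der)_\BQ$ is exactly the needed condition.
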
 

 \begin{definition}\label{defC_Iw}Let $I \subset \Delta$ and $w \in W^I$. 
    The \textit{generalized Schubert cell} in $\sF$ associated to $w$ is 
         $$C_I(w):=\bP_Iw\bB/\bB=\bigcup_{v \in W_I}C(vw).$$ 
    If $I=\emptyset$, we omit the subscript and call it Schubert cell. 
 \end{definition}

First, it turns out that the $Y_I$ are a union of Schubert cells. 

\begin{proposition}\cite[Proposition 4.1]{O1} For $I \subsetneq \Delta$, we have 
    $$Y_I= \bigcup_{w \in \Omega_I} C(w).$$
\end{proposition}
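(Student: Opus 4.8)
The plan is to first prove the case $\lvert \Delta \setminus I \rvert = 1$, i.e.\ that $Y_{\varpi_\alpha} = \bigcup_{w \in \Omega_{\Delta \setminus \{\alpha\}}} C(w)$ for every $\alpha \in \Delta$, and then to intersect. Recall the Bruhat decomposition $\sF = \bigsqcup_{w \in W} C(w)$ into Schubert cells $C(w) = \bB w \bB / \bB$, and write $x_w := w\bB/\bB \in \sF$ for the associated $\bT$-fixed point, so that $C(w) = \bB \cdot x_w$. Since $Y_I = \bigcap_{\alpha \notin I} Y_{\varpi_\alpha}$ and $\Omega_I = \bigcap_{\alpha \notin I} \Omega_{\Delta \setminus \{\alpha\}}$ hold by definition, the general case follows from the special one: as $\{C(w)\}_{w \in W}$ is a partition of $\sF$, intersecting the sets $\bigcup_{w \in \Omega_{\Delta \setminus \{\alpha\}}} C(w)$ over $\alpha \notin I$ produces exactly the union of those $C(w)$ with $w \in \Omega_I$. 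So fix $\alpha$, set $I' := \Delta \setminus \{\alpha\}$, and let $\tau := \varpi_\alpha$, which after clearing a denominator we regard as an element of $X_*(\bT)$ — harmless, since the slope function is homogeneous in its second argument. Recall from the end of Section \ref{s:rootdatum} that $\bP_{I'} = \bP(\tau)$; let $\bL$ be the Levi factor of $\bP_{I'}$ containing $\bT$, so that $\tau$ is central in $\bL$.

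The first step is to show that $x \mapsto \mu^{\sL}(x, \tau)$ is constant on each Schubert cell $C(w)$, with value $\mu^{\sL}(x_w, \tau)$; this already displays $Y_{\varpi_\alpha}$ as a union of Schubert cells. Since $\langle \gamma, \varpi_\alpha \rangle = \delta_{\alpha\gamma}$ for $\gamma \in \Delta$, for any positive root $\beta$ one has $\langle \beta, \tau \rangle \geq 0$, with equality if and only if $\beta \in \Phi_{I'}$; in particular $\tau$ is a dominant cocharacter. Hence, writing a point of $C(w) = \bB \cdot x_w$ as $u \cdot x_w$ with $u \in \bU$ and using $\tau(t) \cdot x_w = x_w$, the limit $x_0 := \lim_{t \to 0} \tau(t) \cdot (u \cdot x_w) = \big(\lim_{t \to 0} \tau(t) u \tau(t)^{-1}\big) \cdot x_w$ exists ($\sF$ is proper) and lies in $\bL \cdot x_w$, because only the root-subgroup coordinates of $u$ indexed by $\Phi_{I'}$ survive in the limit. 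Since $\tau$ is central in $\bL$, the orbit $\bL \cdot x_w$ is pointwise $\tau$-fixed and $\tau$ acts through one and the same character on the fibres of $\sL$ along it; hence, by the contraction-invariance of the slope function (cf.\ \cite[Definition 2.2]{M}), $\mu^{\sL}(x, \tau) = \mu^{\sL}(x_0, \tau) = \mu^{\sL}(x_w, \tau)$.

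The second step evaluates this at $x_w$. The stabiliser of $x_w$ in $\bG$ is $w\bB w^{-1} = \bP(w\mu)$ (using $\bB = \bP(\mu)$), so $x_w$ is the point of $\sF$ attached to the cocharacter $w\mu$. Because $b = 1$ the Newton cocharacter vanishes, and $\ov\mu = \mu$; the explicit form of the slope function on $\bT$-fixed points (cf.\ \cite[Section 3]{O1}) then shows that $\mu^{\sL}(x_w, \tau) < 0$ if and only if $(w\mu, \varpi_\alpha) > 0$, equivalently — by Lemma \ref{fweightandpairing} — if and only if $\langle \check\varpi_\alpha, w\mu \rangle_\der > 0$, i.e.\ $w \in \Omega_{\Delta \setminus \{\alpha\}}$. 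Combined with the first step this yields $Y_{\varpi_\alpha} = \bigcup_{w \in \Omega_{\Delta \setminus \{\alpha\}}} C(w)$, and the reduction of the first paragraph completes the proof.

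The main obstacle is the first step — analysing the contraction of a Schubert cell under the dominant cocharacter $\varpi_\alpha$ and verifying that $\sL$ carries a single $\varpi_\alpha$-weight along the relevant component of $\sF^{\varpi_\alpha}$ — together with pinning down the precise sign in the fixed-point formula $\mu^{\sL}(x_w, \varpi_\alpha) = -(w\mu, \varpi_\alpha)$, which has to be read off carefully from the normalisations used in \cite{M} and \cite{O1}.
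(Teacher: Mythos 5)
The paper cites this result directly from \cite[Proposition 4.1]{O1} without reproducing a proof, so there is no in-paper argument to compare against; I can only evaluate your proposal on its own merits and against what the standard GIT argument looks like.

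Your proof is correct and is, as far as I can tell, essentially the argument one finds in \cite{O1}. The three steps are all sound. The reduction to $\lvert \Delta \setminus I \rvert = 1$ is exactly right: both $Y_I = \bigcap_{\alpha \notin I} Y_{\varpi_\alpha}$ and $\Omega_I = \bigcap_{\alpha \notin I} \Omega_{\Delta\setminus\{\alpha\}}$ are tautologies from the definitions, and because the Schubert cells $\{C(w)\}_{w\in W}$ partition $\sF$, an intersection of unions of cells is the union over the intersection of the index sets. The contraction argument in the first step is the Bia\l{}ynicki--Birula picture: for $u \in \bU$ and $\tau = \varpi_\alpha$ dominant, the root coordinates $u_\beta$ with $\langle\beta,\tau\rangle>0$ (i.e.\ $\beta \notin \Phi_{I'}$) are killed in the limit, so $x_0 = \lim_{t\to 0}\tau(t)\cdot x$ lands in $\bL\cdot x_w$, which is pointwise fixed by the central $\tau$, and the $\tau$-weight on the fibre of $\sL$ is locally constant on $\sF^\tau$, hence constant on the connected set $\bL\cdot x_w$. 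Combined with contraction-invariance of the Hilbert--Mumford slope this gives $\mu^{\sL}(x,\tau)=\mu^{\sL}(x_w,\tau)$ on all of $C(w)$, so $Y_{\varpi_\alpha}$ is a union of cells. The evaluation at the fixed point $x_w$, using $\bB=\bP(\mu)$, $w\bP(\mu)w^{-1}=\bP(w\mu)$, and $\nu_1 = 0$, then reduces to the fixed-point formula for the slope, which matches the definition of $\Omega_I$ via $(w\mu,\varpi_\alpha)>0$.

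The one place you are leaning on a citation rather than a derivation is the fixed-point formula $\mu^{\sL}(x_w,\varpi_\alpha)<0 \iff (w\mu,\varpi_\alpha)>0$, which is the sign-sensitive crux; you flag this yourself at the end, which is appropriate for a blind reconstruction. Everything else is correct and complete, including the observation that $\bB\cdot x_w = \bU\cdot x_w$ (used implicitly), the centrality of $\varpi_\alpha$ in $\bL_{\bP_{I'}}$, and the harmlessness of clearing denominators in $\varpi_\alpha$ since the sign of $\mu^{\sL}$ is invariant under positive rescaling of the second argument. I see no gap.
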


But for our purposes, we need a description in terms of generalized Schubert cells.  

\begin{proposition}\label{genSchCe} For $I \subsetneq \Delta$, we have 
    $$Y_I= \bigcup_{w \in W^I \cap \, \Omega_I} C_I(w).$$
\end{proposition}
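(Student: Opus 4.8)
The plan is to reduce the statement, by way of \cite[Proposition 4.1]{O1}, to a purely combinatorial identity of index sets in $W$, and to prove that identity using the $W$-invariance of the invariant inner product together with the fact that $W_I$ fixes the relevant cocharacters $\varpi_\alpha$.

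First I would recall that \cite[Proposition 4.1]{O1} gives $Y_I=\bigcup_{w\in\Omega_I}C(w)$, where $C(w)$ is the ordinary Schubert cell, and that by Definition \ref{defC_Iw} one has $C_I(w)=\bigcup_{v\in W_I}C(vw)$ for $w\in W^I$. Since the Schubert cells $\{C(u)\}_{u\in W}$ are pairwise disjoint and non-empty (Bruhat decomposition), a union $\bigcup_{u\in S}C(u)$ is determined by, and determines, the index set $S$. Because every $u\in W$ is uniquely of the form $vw$ with $v\in W_I$ and $w\in W^I$ (the Kostant decomposition underlying the discussion around Lemma \ref{Kostant}), the cells occurring in $\bigcup_{w\in W^I\cap\,\Omega_I}C_I(w)$ are exactly those indexed by $u=vw$ with $v\in W_I$ and $w\in W^I\cap\Omega_I$. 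Hence the proposition is equivalent to the assertion that, for $v\in W_I$ and $w\in W^I$, one has $vw\in\Omega_I$ if and only if $w\in\Omega_I$.

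To prove this equivalence I would first note that $W_I$ fixes $\varpi_\alpha$ for every $\alpha\in\Delta\backslash I$: indeed $W_I$ is generated by the $s_\beta$ with $\beta\in I$, and
$$ s_\beta.\varpi_\alpha=\varpi_\alpha-\langle\beta,\varpi_\alpha\rangle\,\beta^\vee=\varpi_\alpha-\delta_{\alpha,\beta}\,\beta^\vee=\varpi_\alpha, $$
using $\langle\beta,\varpi_\alpha\rangle=\langle\beta,\varpi_\alpha\rangle_\der=\delta_{\alpha,\beta}$ and $\alpha\neq\beta$. Now the invariant inner product $(\text{ , })$ is $W$-invariant, the action of $W$ on $X_*(\bT)_\BQ$ being induced by conjugation by representatives in $N_\bG(\bT)$, which are isometries by the defining property of the $\inva$. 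Therefore, for $v\in W_I$, $w\in W$ and $\alpha\notin I$,
$$ (vw\mu,\varpi_\alpha)=(w\mu,v^{-1}\varpi_\alpha)=(w\mu,\varpi_\alpha), $$
so by the definition (\ref{omegaI}) of $\Omega_I$ we get $vw\in\Omega_I\iff w\in\Omega_I$. Combining this with the reduction of the previous paragraph and with \cite[Proposition 4.1]{O1} yields $Y_I=\bigcup_{w\in W^I\cap\,\Omega_I}C_I(w)$.

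I do not expect a genuine obstacle here: the whole argument is the bookkeeping reduction to index sets plus the elementary observation that $W_I$ stabilises $\varpi_\alpha$ for $\alpha\notin I$. The only points requiring care are to phrase the reduction with the correct disjoint Kostant decomposition, so that no Schubert cell is recorded with the wrong index, and to make sure the $W$-invariance of $(\text{ , })$ is invoked legitimately (which it is, via the $\inva$).
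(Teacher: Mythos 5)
Your proof is correct, and it takes a genuinely different route from the paper's. The paper invokes \cite[Proposition~11.1.6]{DOR} to write $Y_I=\bigcup_{w'\in\Omega_I}\bP_I w'\bB/\bB$, then applies the Kostant decomposition $w'=vw$ and uses the closedness of $Y_I$ to pass to the index set $W^I\cap\Omega_I$; the nontrivial inclusion (that the minimal representative $w$ of an element of $\Omega_I$ is again in $\Omega_I$) is left somewhat implicit. Your argument instead starts from \cite[Proposition~4.1]{O1} (the Schubert-cell description of $Y_I$ already quoted just above the proposition), reduces everything to the disjointness of Schubert cells, and isolates the exact combinatorial statement that makes the proposition work: $\Omega_I$ is saturated under left multiplication by $W_I$, i.e.\ $vw\in\Omega_I\iff w\in\Omega_I$ for $v\in W_I$. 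You then prove this directly from the $W$-invariance of the invariant inner product together with the elementary fact that $s_\beta\varpi_\alpha=\varpi_\alpha$ whenever $\beta\in I$ and $\alpha\notin I$. Both proofs are valid, but yours is more self-contained (no appeal to the DOR reference or to closedness) and has the advantage of making explicit the $W_I$-invariance of $\Omega_I$, which is a useful fact in its own right and underlies the paper's argument as well.

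Two small points worth noting. First, you should say explicitly that the $W$-invariance of $(\,\cdot\,,\,\cdot\,)$ is obtained by taking $g$ to be a representative in $N_{\bG}(\bT)(\overline K)$ in the isometry property defining the invariant inner product, since the paper states that property only in the form $X_*(\bT)_{\BQ}\to X_*(g\bT g^{-1})_{\BQ}$ for general $g$. Second, in your reduction you should be a little careful to observe that the equivalence $vw\in\Omega_I\iff w\in\Omega_I$ for $v\in W_I$ and \emph{arbitrary} $w\in W$ (not only $w\in W^I$) is what you actually establish, and combined with the uniqueness of the Kostant decomposition this gives $\Omega_I=\{vw:v\in W_I,\ w\in W^I\cap\Omega_I\}$, which is the identity of index sets you need. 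Neither of these is a gap; they are just places where a reader might want the bookkeeping spelled out.
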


\begin{proof}
    We know by \cite[Proposition 11.1.6]{DOR} that $Y_I= \bigcup_{w \in \Omega_I} \bP_Iw\bB/\bB$. 
    For $w' \in \Omega_I$ exist unique $w \in W^I$ and $v \in W_I$ such that $w'=vw$ and $l(w')=l(v)+l(w)$. Hence, we have 
    $$  \bP_Iw'\bB/\bB=\bP_Ivw\bB/\bB=\bP_Iw\bB/\bB.$$ 
    Since $Y_I$ is closed, this implies that $$Y_I= \bigcup_{w \in W^I \cap \, \Omega_I} C_I(w).\qedhere $$ 
\end{proof}
In addition, we make the following observation for the complement of the $Y_I$ in $\sF$. 
\begin{lemma}\label{complement} 
    Let $I \subsetneq \Delta$ and $w_0 \in W$ the longest element. Then, 
    $$ \sF \backslash Y_I= \bigcup_{v \in W \backslash \Omega_I} vw_0C(w_0).$$
\end{lemma}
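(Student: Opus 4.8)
The plan is to start from the known decomposition of the period domain complement and pass to complements of Schubert cells via the Bruhat decomposition of the flag variety. Recall that $\sF = \bigsqcup_{w \in W} C(w)$ as a disjoint union of Schubert cells, so that for any subset $S \subset W$ one has $\sF \setminus \bigcup_{w \in S} C(w) = \bigcup_{w \in W \setminus S} C(w)$. By Proposition \ref{genSchCe} (or its refinement via the previous proposition \cite[Proposition 4.1]{O1}), we have $Y_I = \bigcup_{w \in \Omega_I} C(w)$, hence
\[
   \sF \setminus Y_I = \bigcup_{v \in W \setminus \Omega_I} C(v).
\]
So the content of the lemma is really the identity of indexing: rewriting each $C(v)$ with $v \notin \Omega_I$ as $v w_0 C(w_0)$, i.e. showing that $C(v) = v w_0 C(w_0)$ for every $v \in W$ — or at least that $\{v w_0 C(w_0) : v \in W \setminus \Omega_I\}$ is exactly the set of cells $\{C(v) : v \in W \setminus \Omega_I\}$.

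First I would record the elementary fact that the big cell $C(w_0) = \bB w_0 \bB / \bB$ equals the open $\bB$-orbit, and that translating by $w_0$ on the left turns it into the open $\bB^-$-orbit, where $\bB^- = w_0 \bB w_0$ is the opposite Borel; more generally $v w_0 \cdot (\bB w_0 \bB/\bB)$. The cleanest route is to note $C(w_0) = \bB^- \bB / \bB$ up to the $w_0$ twist and use that $\bB^- = \bU^-$ acts with the Bruhat-type cell decomposition $\sF = \bigsqcup_w \bU^- w \bB/\bB$; but for the stated form I would instead argue directly: the map $x \mapsto w_0 x$ is an automorphism of $\sF$ (induced by left multiplication by a representative of $w_0$ in $\bG(K)$), it permutes the Schubert cells, and one checks on the level of $\bB$-orbits that $w_0 \cdot C(w_0) = C(e)$ and, more generally, that left multiplication by $v w_0$ sends the open cell $C(w_0)$ to the cell $C(v w_0 \cdot w_0) = C(v)$. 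Concretely, $v w_0 (\bB w_0 \bB) = v w_0 \bB w_0 \bB = v \bB^{-} \bB$, and since $\bB^- \bB$ is the open cell through the base point $e\bB$, while $v$ translates it to the cell through $v\bB$, one gets $v w_0 C(w_0) = C(v)$. Then the lemma follows by substituting this into the displayed identity for $\sF \setminus Y_I$ above, with the index set $W \setminus \Omega_I$ matching on both sides.

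I would also double-check the one subtlety about representatives: $w_0 \in W = N_{\bG}(\bT)/\bT$ is not literally an element of $\bG$, so the expression $v w_0 C(w_0)$ must be read as applying the (well-defined) automorphism of $\sF$ attached to any lift of $v w_0$ to $N_{\bG}(\bT)(K) \subset \bG(K)$; this is harmless because two lifts differ by an element of $\bT(K) \subset \bB(K)$, which stabilizes $C(w_0) = \bB w_0 \bB/\bB$ and every Schubert cell, and the identity $v w_0 C(w_0) = C(v)$ is insensitive to the choice. The main (and only real) obstacle is making the orbit computation $v w_0 \cdot (\bB w_0 \bB/\bB) = \bB v \bB/\bB = C(v)$ rigorous, i.e. verifying $w_0 \bB w_0 \bB/\bB$ is the big cell $C(e)$'s translate correctly — this reduces to the standard facts $w_0 \bB w_0 = \bB^-$ and $\bB^- \bB/\bB$ is open dense, together with $\bB^- \subset \overline{\bB w_0 \bB}$; everything else is bookkeeping with the Bruhat decomposition. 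Once that computation is in hand the lemma is immediate from Proposition \ref{genSchCe}.
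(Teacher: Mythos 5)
Your argument hinges on the claimed identity $v w_0 C(w_0) = C(v)$, and this identity is false. The cell $C(w_0) = \bB w_0 \bB/\bB$ has dimension $n = \lvert\Phi^+\rvert$; left translation by $v w_0$ preserves dimension, so $v w_0 C(w_0)$ also has dimension $n$, whereas $C(v) = \bB v \bB/\bB$ has dimension $l(v)$. The two can only agree when $v = w_0$. What is actually true is that $v w_0 C(w_0) = v\,\bB^-\bB/\bB$ is the translate of the open big cell to $v\bB/\bB$ (Kempf's ``coordinate neighborhood''), and it \emph{contains} $C(v)$ properly, as the paper records from \cite[Proposition 6.3 a)]{K}. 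Your computation $v w_0 \bB w_0 \bB = v\bB^-\bB$ is correct up to that point, but the leap to ``$= C(v)$'' conflates the open dense translate of the big cell through $v\bB/\bB$ with the (typically much smaller, non-open) $\bB$-orbit of $v\bB/\bB$.

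Because of this, your proof only yields the easy inclusion $\sF \backslash Y_I = \bigcup_{v \notin \Omega_I} C(v) \subseteq \bigcup_{v \notin \Omega_I} v w_0 C(w_0)$, and entirely omits the reverse inclusion, which is the substantial part of the lemma: one must show that for $v \notin \Omega_I$ the open set $v w_0 C(w_0)$ does not meet $Y_I = \bigcup_{w \in \Omega_I} C(w)$. The paper does this via a $\bT$-invariance argument in Kempf's coordinate chart: if $C(w) \cap v w_0 C(w_0) \neq \emptyset$ for some $w \in \Omega_I$, one produces a non-empty closed $\bT$-stable subset of $\bU_\bB^-$, forces $1$ to lie in it by taking the limit under the one-parameter subgroup $w_0\mu$, deduces $v\bB/\bB \in \overline{C(w)}$, hence $v \leq w$ and therefore $v \in \Omega_I$ (since $Y_I$ is closed), a contradiction. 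Without some argument of this kind --- and with the false identity removed --- your proof does not establish the lemma.
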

\begin{proof} 
Let $v \in W$. We first notice that $vw_0C(w_0)=vw_0 \bB w_0v^{-1}v\bB/ \bB$ is the \glqq coordinate neighborhood \grqq \,of $v\bB/\bB$ in $\sF$, which Kempf describes in \cite[Section 3]{K} (cf. \cite[Corollary 3.5]{K}).
 Then, by \cite[Proposition 6.3 a)]{K},
$$ C(v) \subset vw_0C(w_0).$$ 
Hence, 
$$ \sF \backslash Y_I= \bigcup_{v \in W \backslash \Omega_I} C(v) \subset \bigcup_{v \in W \backslash \Omega_I} vw_0C(w_0).$$
For the other inclusion, let $w \in \Omega_I$ and $v \not\in \Omega_I$. Then, we consider 
$$X:=v^{-1}\overline{C(w)} \cap w_0C(w_0).$$  
It is a closed $\bT$-invariant subset of $w_0C(w_0)$. By \cite[Theorem 3.1]{K}, this is in bijection to a closed $\bT$-invariant subset 
$$H \subset \bU_\bB^-.$$
Here $\bT$ acts by conjugation. We suppose that $X$ is non-empty. Thus, $H$ is non-empty. Furthermore, by \cite[Exercise 8.4.6 (5)]{Sp},
$$ \bU_\bB^-(\ov K)=\big\{g \in \bG(\ov K)  \mid \lim_{t \rightarrow 0} (w_0\mu)(t)g(w_0\mu)(t)^{-1}=1  \big\}.$$
As $H$ is closed and $\bT$-invariant, this description implies that $1 \in H$ (cf. \cite[Lemma 9]{Kn}). Therefore, $\bB/\bB \in X$ and 
$v\bB/\bB \in \overline{C(w)}$, respectively. This implies that $v \leq w$ and therefore $v \in \Omega_I$ since $Y_I$ is closed. That is a contradiction. Hence, 
$$ C(w) \cap vw_0C(w_0) = \emptyset$$ 
which implies  $$Y_I \cap \bigcup_{v \in W \backslash \Omega_I} vw_0C(w_0)= \emptyset.\qedhere$$ 
\end{proof}

\subsection{Algebraic local cohomology}\label{s:AlgLocCo}
In this subsection, we consider the local cohomology groups of $\sF$ with support in a (generalized) Schubert cell and in the closed varieties $Y_I$, respectively, with coefficients in $\CE_\lambda$. \\ 

Jantzen states in \cite[Introduction and Part II, Section 1]{J} that split reductive groups and constructions like Borel und Parabolic subgroups can be carried out over $\BZ$, and therefore, by base change, over any integral domain (cf. \cite[Exp. XXV, Corollary 1.3]{SGA3III}).  That means that there is a split connected reductive algebraic group $\sG$ over $\BZ$ with split maximal torus $\sT$ and Borel $\sB$ such that $\sG_K = \bG, \sT_K = \bT$ and $\sB_K = \bB$. Let $\sF_{\BZ}:=\sG/\sB$ and $$C(w)_{\BZ}:=\sB w\sB/\sB \subset \sF_{\BZ}$$ for $w \in W$. They are flat $\BZ$-schemes by \cite[Part I, Section 5.7 (2)]{J}. Moreover, $$\sF=(\sF_{\BZ})_K \text{ and } C(w)=(C(w)_{\BZ})_K.$$ 
The first identity follows from the fact that the base change commutes with the quotient (cf. \cite[Part I, Section 5.5 (4)]{J}). The latter one can be seen after identifying both sides with affine spaces (cf. \cite[Part II, Section 13.3 (1)]{J}).
In \cite[Section 3]{KL} it is mentioned that the flag varieties and Schubert cells admit \glqq flat lifts to 
$\mathbb{Z}$-schemes\grqq. Furthermore, as described in \cite[Section 13, p. 389]{K} (cf. \cite[Part I, Section 5.8]{J}), we have an invertible sheaf $\CE_{\lambda,\BZ}$ on $\sF_\BZ$ which is defined similarly to $\CE_\lambda$. The same arguments apply if we assume that $\bG$ and all introduced objects are defined over $\BQ$ and $\BC$, respectively. We denote the ground field, if it is not $K$, as a subscript in the 
following proof. \\  

Then, we have the following two identifications of local cohomology groups on $\sF$. They are already known over $\BC$ (cf. \cite[(3.3)]{MR} and \cite[Theorem 1 \& Theorem 3]{MR}). 

\begin{lemma}\label{loccocell}For $w \in W$, one has 
    $$H^i_{C(w)}(\sF,\CE_\lambda)\cong\begin{cases} M(w\cdot\lambda) &i=n-l(w),\\ 0 &\text{else} \end{cases}$$
in $\CO_{\alg}$.
\end{lemma}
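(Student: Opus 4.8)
The plan is to compute $H^i_{C(w)}(\sF,\CE_\lambda)$ via a local (formal/analytic) model of the Schubert cell and its normal directions, reducing the global statement to a linear-algebra description of local cohomology of a polynomial ring supported on a coordinate subspace, and then to identify the resulting $U(\fkg)$-module with the Verma module $M(w\cdot\lambda)$ by comparing formal characters.

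\textbf{Step 1: Reduce to a neighborhood of the cell.}
The Schubert cell $C(w)=\sB w\sB/\sB$ is locally closed in $\sF$, and it is closed inside the open Schubert ``big cell'' translate $vw_0C(w_0)$ for an appropriate $v$ — more precisely, by Kempf's coordinate neighborhoods (cf. \cite[Section 3]{K}, as already used in the proof of Lemma \ref{complement}), there is an open affine $U\cong \BA^n$ around the $\bT$-fixed point $w\bB/\bB$ in which $C(w)\cap U$ is the linear coordinate subspace cut out by the coordinates corresponding to the roots made negative by $w$, hence of codimension $l(w)$. Using the excision isomorphism (Proposition \ref{excision}), $H^i_{C(w)}(\sF,\CE_\lambda)\cong H^i_{C(w)\cap U}(U,\CE_\lambda|_U)$. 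Since $\CE_\lambda$ is a line bundle, it is trivial on the affine $U$, so we are reduced to $H^i_Z(\BA^n, \CO)\otimes(\text{1-dim'l twist})$ where $Z\subset\BA^n$ is a linear subspace of codimension $c=l(w)$.

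\textbf{Step 2: Local cohomology of the polynomial ring on a linear subspace.}
It is classical that for $Z=\{x_1=\dots=x_c=0\}\subset\BA^n=\Spec k[x_1,\dots,x_n]$ one has $H^i_Z(\BA^n,\CO)=0$ for $i\neq c$ and $H^c_Z(\BA^n,\CO)\cong k[x_{c+1},\dots,x_n]\otimes k[x_1^{-1},\dots,x_c^{-1}]\cdot(x_1\cdots x_c)^{-1}$ as a module over the polynomial ring (the ``inverse polynomial'' module). This explains the vanishing for $i\neq n-l(w)$ and gives the single nonzero group in degree $n-l(w)$. The work now is to track the $\bT$-action and the action of $\fkg$ via the vector fields coming from the $\bG$-action on $\sF$ (the $\fkg$-action on $\CE_\lambda$), restricted to the neighborhood $U$; together with the character twist by $w\cdot\lambda$ coming from the value of $\CL_\lambda\otimes\omega_\sF$ at the fixed point and from the sign convention in (\ref{linebundle}) — this is exactly where the dot-action shift by $\rho$ (through $\omega_\sF=\CL_{2\rho}$) enters.

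\textbf{Step 3: Identify with the Verma module via characters.}
Having produced a $U(\fkg)$-module $H^{n-l(w)}_{C(w)}(\sF,\CE_\lambda)$ lying in $\CO_{\alg}$, whose weights (with respect to $\fkt$) are, by the explicit description in Step 2, $w\cdot\lambda$ minus a sum of positive roots with multiplicities matching those of $U(\fku^-_w)$ acting freely, I would compute its formal character $\ch$ and observe it equals $\ch(M(w\cdot\lambda))$. A clean way to conclude is to exhibit it as a highest-weight module generated in weight $w\cdot\lambda$ with a free action of $U(\fku^-)$ (the negative nilradical), i.e. to produce an injection $M(w\cdot\lambda)\to H^{n-l(w)}_{C(w)}(\sF,\CE_\lambda)$ of $U(\fkg)$-modules — the ``cyclic generator'' being the class $(x_1\cdots x_c)^{-1}$ times the local trivialization of $\CE_\lambda$ — and then invoke Lemma \ref{Verma} (comparison of characters forces the submodule to be all of $M(w\cdot\lambda)$, hence the map is an isomorphism). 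This argument is the algebraic analogue of \cite[(3.3), Theorems 1 \& 3]{MR}, transported from $\BC$ to $K$ using the flat $\BZ$-models $\sF_\BZ$, $C(w)_\BZ$, $\CE_{\lambda,\BZ}$ set up just before the statement, so that base change $\BZ\to K$ (and compatibility with the $\BC$-case) is harmless for these coherent local cohomology groups.

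\textbf{Main obstacle.}
The routine part is the vanishing and the character count; the genuinely delicate point is keeping precise track of the $\fkg$-module structure — i.e. checking that the vector fields generating the $\bG$-action on $\sF$ act on the inverse-polynomial module exactly as the Chevalley generators act on $M(w\cdot\lambda)$, with the correct $\rho$-shift and sign from the definition of $\CE_\lambda$. Concretely, one must match the $\fkb^-$-lowest-weight (equivalently $\fkb$-highest-weight after the $w_0$-type twist at the fixed point) with $w\cdot\lambda$ and verify local $\fku_\bB$-finiteness to land in $\CO^\fkb_\alg=\CO_\alg$; I expect this to be the step requiring the most care, and the one where one leans hardest on the local coordinates of \cite[Sections 3, 13]{K} and the conventions of \cite[Parts I--II]{J}.
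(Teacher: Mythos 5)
Your outline — localize to a Kempf coordinate chart, express $H^*_{C(w)}$ as an inverse polynomial module, then match the $\fkg$-structure with $M(w\cdot\lambda)$ directly in coordinates — is the route of \cite{MR} over $\BC$ and is genuinely different from what the paper does. The paper never computes the $\fkg$-action in a chart: it takes the vanishing from \cite[Theorem 10.9]{K} (affineness of $C(w)$), the formal character and $\fkt$-semisimplicity from \cite[Lemma 12.8]{K}, treats $w=e$ by hand, and then reduces the general case to exhibiting an \emph{injection} $H^{n-l(w)}_{C(w)}(\sF,\CE_\lambda)\hookrightarrow H^{n}_{C(e)}(\sF,\CE_\lambda)\cong M(\lambda)$, so that Lemma~\ref{Verma} can be applied. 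Producing that injection is where all the work goes, and the paper builds it by descent: the Cousin complex over $\BQ$ is shown (via Serre duality and Borel--Weil--Bott) to resolve a single cohomology group, over $\BC$ this becomes the BGG resolution, the morphisms between its Verma terms are nonzero and hence injective by \cite[Sections 4.2 and 6.8]{H2}, and faithful flatness over the $\BZ$-model transfers injectivity back to $\BQ$ and $K$.

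There are two concrete problems with your write-up. First, an off-by-complement error that propagates: $\dim C(w)=l(w)$, so in Kempf's chart $C(w)$ is cut out by the $n-l(w)$ coordinates $x_\beta$ with $\beta\in\Phi^-$, $w\beta\in\Phi^-$, and has codimension $n-l(w)$, not $l(w)$. With $c=l(w)$ as you wrote, $H^i_Z(\BA^n,\CO)$ concentrates in degree $l(w)$, contradicting the degree $n-l(w)$ you then claim; your Step 2 is internally inconsistent as phrased. Second, and more seriously, your Step 3 misapplies Lemma~\ref{Verma}: it says that a $U(\fkg)$-\emph{submodule of a Verma module} with the character of $M(\mu)$ is isomorphic to $M(\mu)$ — hence the paper's insistence on embedding $H^{n-l(w)}_{C(w)}$ \emph{into} $M(\lambda)$. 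You instead propose a map in the opposite direction, $M(w\cdot\lambda)\to H^{n-l(w)}_{C(w)}$, to which Lemma~\ref{Verma} simply does not apply. What you would then need is a direct verification that $U(\fku^-)$ acts freely on the class $(x_1\cdots x_c)^{-1}$, i.e. that the $\fku^-$-vector fields of the $\bG$-action realize free multiplication; but this is exactly the ``main obstacle'' you acknowledge and do not resolve, and it is precisely the computation the paper's descent argument is designed to avoid. So as written the proposal has a genuine hole at the one step where the real content lies.
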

\begin{proof}
    As $C(w)$ is affine, it follows that $H^i_{C(w)}(\sF,\CE_\lambda)=0$ for $i \neq n-l(w)$ (cf. \cite[Theorem 10.9]{K}). Since $\CE_\lambda$ has a natural $\fkg$-module structure (cf. \cite[Section 1.2]{O2}), we see by functoriality that $H^{n-l(w)}_{C(w)}(\sF,\CE_\lambda)$ is a $\fkg$-module. 
    Furthermore, by \cite[Lemma 12.8.]{K}, we have that $H^{n-l(w)}_{C(w)}(\sF,\CE_\lambda)$ is $\fkt$-semisimple and  $$\ch\big(H^{n-l(w)}_{C(w)}(\sF, \CE_\lambda)\big)=\ch\big(M(w \cdot \lambda)\big).$$ This implies that $H^{n-l(w)}_{C(w)}(\sF,\CE_\lambda)$ lies in the category $\CO_{\alg}$ (cf. \cite[Example 1.1]{AL}). 
    In particular for $w=e$, we see by the last remark in \cite[Section 12]{K} and the proof of \cite[Proposition 1.4.2]{O2} that 
    \begin{align*}
        H^{n}_{C(e)}(\sF, \CE_\lambda)&\cong H^{n}_{C(e)}(\sF, \CO_{\sF}) \otimes_{K} (K)_{2 \rho + \lambda} \
    \cong M(-2\rho)^\vee \otimes_{K} (K)_{2 \rho + \lambda} \\
    &\cong M(-2\rho)\otimes_{K} (K)_{2 \rho + \lambda} \cong M(\lambda)
    \end{align*}
    holds in the category $\CO_{\alg}$. Here, we used \cite[Proposition 7]{B} for the second isomorphism and the fact that $-2\rho$ is antidominant for the third.
    Thus, by Lemma \ref{Verma}, it remains  to prove that 
    there is a non-trivial injective morphism 
    \begin{equation}\label{search}
        H^{n-l(w)}_{C(w)}(\sF,\CE_\lambda) \longrightarrow  H^{n}_{C(e)}(\sF, \CE_\lambda).
    \end{equation}
    For this, let $k \in \{K, \BQ, \BC\}$. Further, we let $X_1:=\overline{C(w)_\BZ}$ and $X_2:=X_1 \backslash C(w)_\BZ$. By Lemma \ref{locrel} and Proposition \ref{excision}, we have  
    $$H^q_{C(w)_\BZ}(\sF_\BZ, \CE_{\lambda,\BZ}  \otimes k ) \cong H^q_{X_1/X_2}(\sF_\BZ, \CE_{\lambda,\BZ}  \otimes k) \text{ and } H^q_{C(w)_k}(\sF_k, \CE_{\lambda,k} ) \cong H^q_{X_{1,k}/X_{2,k}}(\sF_k, \CE_{\lambda,k} ).$$  
    Then, by \cite[Lemma 13.8]{K}, we obtain an isomorphism 
    $$H^q_{C(w)_\BZ}(\sF_\BZ, \CE_{\lambda,\BZ} \otimes k) \cong H^q_{C(w)_k}(\sF_k, \CE_{\lambda,k})$$
    of $k$-vector spaces. As $\CO_{\sF_\BZ}$ is flat over $\BZ$ and $\CE_{\lambda,\BZ}$ is locally free, it follows that $\CE_{\lambda,\BZ}$ is flat over $\BZ$ since it is a local property. 
     Following \cite[Section 4]{KL}, this yields a spectral sequence 
    $$ E_2^{p,q}=\Tor^{\BZ}_{-p}\big(H^q_{C(w)_\BZ}(\sF_\BZ, \CE_{\lambda,\BZ} ), k\big) \Rightarrow H^{p+q}_{C(w)_k}(\sF_k,\CE_{\lambda,k}).$$
    Since $k$ is flat over $\BZ$, we have an isomorphism 
    $$H^{n-l(w)}_{C(w)_\BZ}(\sF_\BZ, \CE_{\lambda,\BZ} )\otimes k \cong H^{{n-l(w)}}_{C(w)_k}(\sF_k,\CE_{\lambda,k})$$
    of $k$-vector spaces. This implies 
    \begin{align}
        H^{{n-l(w)}}_{C(w)_\BC}(\sF_\BC,\CE_{\lambda,\BC}) &\cong  H^{n-l(w)}_{C(w)_\BQ}(\sF_\BQ,\CE_{\lambda,\BQ}) \otimes_\BQ \BC \label{C}
    \intertext{ and }
    H^{{n-l(w)}}_{C(w)}(\sF,\CE_{\lambda}) &\cong  H^{n-l(w)}_{C(w)_\BQ}(\sF_\BQ,\CE_{\lambda,\BQ}) \otimes_\BQ K. \label{K}
    \end{align}
    If we choose $X_1=\sF_\BZ$ and $X_2=\emptyset$ instead, then using the same arguments as before, we get that 
    \begin{align}
        H^q(\sF_\BC,\CE_{\lambda,\BC})&\cong  H^q(\sF_\BQ,\CE_{\lambda,\BQ}) \otimes \BC \label{HC}
    \end{align}
    for all integers $q$. Next, let $Z_j \subset \sF_\BQ$ be the union of the closure of Schubert cells of codimension greater than or equal to $j$. This defines a filtration on $\sF_\BQ$ by closed subsets 
    \begin{equation}\label{globalfilt}
        \sF_\BQ=Z_0 \supset Z_1 \supset \ldots \supset Z_n=C(e)_\BQ.
    \end{equation}
    Furthermore, 
    $$Z_j\backslash Z_{j+1} = \bigsqcup_{\substack{w \in W \\ l(w)=n-j}} C(w)_\BQ.$$ 
    Then, we get, by Lemma \ref{locrel} and Proposition \ref{excision} (cf. \cite[p. 385]{K}), that 
    $$ H^{i}_{Z_j\backslash  Z_{j+1}}(\sF_\BQ,\CE_{\lambda,\BQ})\cong\bigoplus_{\substack{w \in W \\ l(w)=n-j}} H^i_{C(w)}(\sF_\BQ,\CE_{\lambda,\BQ})$$ 
    for all integers $i$. 
    Thus, by Lemma \ref{cousinspectral} and Lemma \ref{loccocell}, we can compute $H^*(\sF_\BQ, \CE_{\lambda,\BQ})$ by the complex 
    \begin{equation}\label{BGGreso}\bigoplus_{\substack{w \in W \\ l(w)=n}} H^0_{C(w)_\BQ}(\sF_\BQ, \CE_{\lambda,\BQ})\rightarrow \ldots \rightarrow  \bigoplus_{\substack{w \in W \\ l(w)=1}} H^{n-1}_{C(w)_\BQ}(\sF_\BQ,\CE_{\lambda,\BQ})\rightarrow H^n_{C(e)_\BQ}(\sF_\BQ, \CE_{\lambda,\BQ}).
    \end{equation} 
    On the other hand, we have by Serre duality (cf. \cite[Part II, 4.2 (9)]{J}, note that the setting in loc. cit. induces different signs) that 
    $$H^i(\sF_\BQ, \CE_{\lambda,\BQ})=H^i(\sF_\BQ, \CL_{\lambda,\BQ} \otimes \omega_{\sF_\BQ}\\) \cong (H^{n-i}(\sF_\BQ, (\CL_{\lambda,\BQ})^\vee))'=(H^{n-i}(\sF_\BQ, \CL_{-\lambda,\BQ}))'.$$
    For the latter one, the Borel-Weil-Bott theorem (cf. \cite[Part II, Corollary 5.5]{J}, note again the setting in loc.cit) gives  $H^{i}(\sF_\BQ, \CL_{-\lambda,\BQ})=0$ for $i \neq 0$. Hence, the complex (\ref{BGGreso}) is a resolution of $H^n(\sF_\BQ, \CE_{\lambda,\BQ})$. By (\ref{C}), (\ref{HC}) and (faithfully) flatness of field extensions, we get an acyclic complex 
    \begin{align}\label{BGGresoC} 0 \rightarrow \bigoplus_{\substack{w \in W \\ l(w)=n}} H^0_{C(w)_\BC}(\sF_\BC, \CE_{\lambda,\BC})\rightarrow \ldots &\rightarrow  \bigoplus_{\substack{w \in W \nonumber \\ l(w)=1}} H^{n-1}_{C(w)_\BC}(\sF_\BC,\CE_{\lambda,\BC}) \\ &\rightarrow H^n_{C(e)_\BC}(\sF_\BC, \CE_{\lambda,\BC})\rightarrow H^n(\sF_\BC, \CE_{\lambda,\BC}) \rightarrow 0.
    \end{align} 
    Again by the Borel-Weil-Bott theorem, we know that $H^n(\sF_\BC, \CE_{\lambda,\BC})= L(\lambda)_\BC$. 
    Here $L(\lambda)_\BC$ is the unique simple quotient of the Verma module $M(\lambda)_\BC$ in the usual BGG Category \CO over the complex numbers (cf. \cite[Section 1.3]{H2}). Then, by \cite[(3.3)]{MR}, we have 
    $$ H^{n-l(w)}_{C(w)_\BC}(\sF_\BC,\CE_{\lambda,\BC}) \cong M(w \cdot \lambda)_\BC$$ 
    for all $w \in W$. Therefore, the complex (\ref{BGGresoC}) is a BGG resolution of $L(\lambda)_\BC$ (cf. \cite[Section 6.1]{H2}).
    Thus, by \cite[Theorem, Section 6.8]{H2}, the natural morphism 
    $$H^{n-l(w)}_{C(w)_\BC}(\sF_\BC,\CE_{\lambda,\BC}) \longrightarrow H^{n-l(w')}_{C(w')_\BC}(\sF_\BC,\CE_{\lambda,\BC})$$ 
    is non-trivial for $w' \leq w$ with $l(w)=l(w)+1$. Moreover, it is injective by \cite[Theorem, Section 4.2]{H2}. This implies that 
    the morphism 
    $$H^{n-l(w)}_{C(w)_\BQ}(\sF_\BQ,\CE_{\lambda,\BQ}) \longrightarrow H^{n-l(w')}_{C(w')_\BQ}(\sF_\BQ,\CE_{\lambda,\BQ})$$ 
    in the complex (\ref{BGGreso}) was already injective by the faithfully flatness of field extensions. Again by the faithfully flatness and by (\ref{K}), we get an injective morphism 
    $$H^{n-l(w)}_{C(w)}(\sF,\CE_{\lambda}) \longhookrightarrow H^{n-l(w')}_{C(w')'}(\sF,\CE_{\lambda})$$ 
    for all $w, w' \in W$ with $w' \leq w$ and $l(w)=l(w')+1$. Let $w \in W$ with reduced expression $w=s_1\ldots s_t$ and let $w_i:=s_1\ldots s_i$, i.e. $w=w_t$. Then, we get the desired morphism (\ref{search}) from the sequence of injections 
    $$ H^{n-l(w)}_{C(w)}(\sF,\CE_{\lambda}) \longhookrightarrow H^{n-l(w_{t-1})}_{C(w_{t-1})}(\sF,\CE_{\lambda}) \longhookrightarrow \ldots \longhookrightarrow H^{n}_{C(e)}(\sF,\CE_{\lambda}).$$ 
    \end{proof}
Similar to Lemma \ref{loccocell}, we have the following identification for generalized Schubert cells. 
\begin{lemma}\label{C_I(w)} For $I \subset \Delta$ and $w \in W^I$, one has
    $$H^i_{C_I(w)}(\sF,\CE_\lambda)\cong\begin{cases} M_I(w\cdot\lambda) &i=n-l(w),\\ 0 &\text{else} \end{cases}$$
in $\CO_{\alg}^{\fkp_I}$.
\end{lemma}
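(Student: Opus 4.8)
The plan is to reduce the computation to Lemma~\ref{loccocell} by means of the Cousin-type spectral sequence of Lemma~\ref{cousinspectral}, applied to a filtration of the generalized Schubert cell $C_I(w)$ by the ordinary Schubert cells it contains. Write $w_{0,I}$ for the longest element of $W_I$ and put $q_0:=l(w_{0,I})$. Since $w\in W^I$, one has $l(vw)=l(v)+l(w)$ for every $v\in W_I$, and for $v,v'\in W_I$ the conditions $C(v'w)\subseteq\overline{C(vw)}$, $v'w\leq vw$ and $v'\leq v$ are all equivalent (compatibility of the Bruhat order with Kostant representatives, cf.\ \cite{BB}). Hence $C_I(w)=\bigsqcup_{v\in W_I}C(vw)$ is locally closed in $\sF$; by Proposition~\ref{excision} fix an open $V\subseteq\sF$ in which $C_I(w)$ is closed. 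For $0\leq p\leq q_0$ set
\[
 Z_p:=\bigsqcup_{\substack{v\in W_I\\ l(v)\leq q_0-p}}C(vw).
\]
The closure relations above show each $Z_p$ is closed in $C_I(w)$, hence in $V$, so we get a filtration $C_I(w)=Z_0\supset Z_1\supset\dots\supset Z_{q_0}=C(w)\supset Z_{q_0+1}=\emptyset$ whose strata $Z_p\setminus Z_{p+1}=\bigsqcup_{v\in W_I,\,l(v)=q_0-p}C(vw)$ are finite disjoint unions in which, again by the same relations, each piece $C(vw)$ is open and closed.

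Applying Lemma~\ref{cousinspectral} (and Proposition~\ref{excision} for the abutment) gives a spectral sequence $E_1^{p,q}=H^{p+q}_{Z_p/Z_{p+1}}(V,\CE_\lambda)\Rightarrow H^{p+q}_{C_I(w)}(\sF,\CE_\lambda)$. Since the strata are clopen, Lemma~\ref{locrel} and Proposition~\ref{excision} identify $E_1^{p,q}\cong\bigoplus_{v\in W_I,\,l(v)=q_0-p}H^{p+q}_{C(vw)}(\sF,\CE_\lambda)$, and Lemma~\ref{loccocell} evaluates this to $\bigoplus_{v\in W_I,\,l(v)=q_0-p}M((vw)\cdot\lambda)$ when $p+q=n-l(w)-(q_0-p)$ and to $0$ otherwise. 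Thus the $E_1$-page is concentrated in the single row $q=q_1:=n-l(w)-q_0$, the spectral sequence degenerates at $E_2$, and $H^{m}_{C_I(w)}(\sF,\CE_\lambda)\cong H^{m-q_1}(D^\bullet)$, where $D^\bullet$ is the complex of $U(\fkg)$-modules with $D^p=\bigoplus_{v\in W_I,\,l(v)=q_0-p}M((vw)\cdot\lambda)$ and differentials the natural maps of Lemma~\ref{cousinspectral}; reindexing by $k=q_0-p$, it reads $\bigoplus_{l(v)=q_0}M((vw)\cdot\lambda)\to\dots\to\bigoplus_{l(v)=1}M((vw)\cdot\lambda)\to M(w\cdot\lambda)$.

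It remains to show $D^\bullet$ is a resolution of $M_I(w\cdot\lambda)$. Applying the exact functor $U(\fkg)\otimes_{U(\fkp_I)}(-)$ to the BGG resolution of the finite-dimensional $\fkl_I$-module $V_I(w\cdot\lambda)$ — which is $\bL_{\bP_I}$-dominant by Proposition~\ref{parabolicweight} — and using $v\cdot(w\cdot\lambda)=(vw)\cdot\lambda$ for $v\in W_I$, produces a resolution of $M_I(w\cdot\lambda)$ by ordinary Verma modules with exactly the terms of $D^\bullet$ (cf.\ Example~\ref{genVM}, \cite[Proposition 2.1]{Le2} for the bottom step, and \cite[Section 6]{H2} for the full BGG resolution). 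Now a component $M((vw)\cdot\lambda)\to M((v'w)\cdot\lambda)$ of the differential of $D^\bullet$ vanishes unless $C(v'w)\subseteq\overline{C(vw)}$, i.e.\ $v'\leq v$; and for a covering $v'\lessdot v$ in $W_I$ it is a nonzero scalar multiple of the unique embedding between these Verma modules, since it coincides with the natural map $H^{n-l(vw)}_{C(vw)}(\sF,\CE_\lambda)\to H^{n-l(v'w)}_{C(v'w)}(\sF,\CE_\lambda)$, which is injective by the proof of Lemma~\ref{loccocell}. A complex of Verma modules of this shape with all covering maps nonzero is, by the rigidity of BGG-type resolutions (cf.\ \cite[Section 6]{H2}), isomorphic to the one above, whence $H^{q_0}(D^\bullet)\cong M_I(w\cdot\lambda)$ and $H^p(D^\bullet)=0$ for $p<q_0$. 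Since $q_0+q_1=n-l(w)$, $M_I(w\cdot\lambda)\in\CO^{\fkp_I}_{\alg}$ by Example~\ref{genVM}, and all identifications are $U(\fkg)$-linear, the lemma follows. The crux is precisely this last identification — that the connecting maps on the $E_1$-page realize the signed Lepowsky differentials; alternatively one avoids it by descending to $\BC$ via the faithfully flat base-change argument already used for Lemma~\ref{loccocell}, where $H^i_{C_I(w)_\BC}(\sF_\BC,\CE_{\lambda,\BC})\cong M_I(w\cdot\lambda)_\BC$ in degree $i=n-l(w)$ and $0$ otherwise.
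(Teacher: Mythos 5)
Your proof is correct and follows essentially the same route as the paper's: filter $C_I(w)$ by the ordinary Schubert cells it contains, run the Cousin spectral sequence of Lemma~\ref{cousinspectral}, identify the $E_1$-page with the Verma modules $M((vw)\cdot\lambda)$ via Lemma~\ref{loccocell}, and recognize the resulting one-row complex as the Lepowsky parabolic BGG resolution of $M_I(w\cdot\lambda)$. The paper is terser on why the Cousin differentials realize the Lepowsky maps (it cites Lepowsky directly and defers the identification to Remark~\ref{morphismBGG}), while you supply the rigidity argument explicitly — a reasonable elaboration, not a different method.
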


\begin{proof}
Over $\mathbb{C}$ this is \cite[Theorem 1/Theorem 3]{MR}. The arguments of loc. cit. are applicable as well. For completeness, we will recall them. \\

Let $Z_j$ be the union of the closure of Schubert cells of codimension greater than or equal to $j$ and $U_w=\sF\backslash \big(\overline{C_I(w)}\backslash C_I(w)\big)$. Hence, $C_I(w)$ is closed in the open subset $U_w$. Let $r_I:=\dim(\bP_I/\bB)$ and $t:=n-l(w)-r_I$. Then, we consider the filtration on $U_w$ by closed subsets 
\begin{equation}\label{filt}
    U_w \supset C_I(w) \supset C_I(w)  \cap Z_{t+1} \supset \ldots \supset C_I(w)  \cap Z_{r_I+t}=C(w).
\end{equation}
As 
$$ (C_I(w)  \cap Z_{t+j})\backslash (C_I(w)  \cap Z_{t+j+1})= C_I(w)  \cap  (Z_{t+j}\backslash Z_{t+j+1})=\bigsqcup_{\substack{v \in W_I \\ l(v)=r_I-j}} C(vw),$$
we get, as in Lemma \ref{loccocell}, that
$$H^i_{(C_I(w) \cap Z_{t+j})/(C_I(w) \cap Z_{t+j+1})}(U_w,\CE_\lambda)\cong\bigoplus_{\substack{v \in W_I \\ l(v)=r_I-j}} H^i_{C(vw)}(U_w,\CE_\lambda)$$
for all integers $i$. Notice that by Proposition \ref{excision}, we have 
$$H^i_{C(vw)}(U_w,\CE_\lambda)\cong H^i_{C(vw)}(\sF,\CE_\lambda).$$ 
Then, applying Lemma \ref{cousinspectral} to (\ref{filt}) and taking Lemma \ref{loccocell}
into account, we see that the cochain complex 
\begin{equation}\label{complexparabolic}
     \bigoplus_{\substack{v \in W_I \\ l(v)=r_I}} M(v\cdot \lambda) \rightarrow \ldots \rightarrow \bigoplus_{\substack{v \in W_I \\ l(v)=1}} M(v\cdot \lambda) \rightarrow  M(\lambda), 
\end{equation} 
starting in degree $t$, computes $H^*_{C_I(w)}(U_w,\CE_\lambda)$, and therefore, by Proposition \ref{excision}, also 
$H^*_{C_I(w)}(\sF,\CE_\lambda)$. Then, by the work of Lepowsky (cf. \cite[p. 506, Proof of Theorem 4.3]{Le1}), we get 
 $$H^{n-l(w)}_{C_I(w)}(\sF,\CE_\lambda)\cong M_I(w \cdot \lambda).$$  
On the other hand, the complex (\ref{complexparabolic}) is obtained from the BGG-resolution of $V_I(\lambda)$ by Verma modules for $L_{P_I}$ by tensoring with $U(\fkg)$ over $U(\fkp_I)$. This functor preserves exactness.  Therefore, $H^i_{C_I(w)}(\sF,\CE_\lambda)=0$ for $i \neq n-l(w)$. 

\end{proof}
Another application of Lemma \ref{cousinspectral} is the computation of the local cohomology groups $H_{Y_I}^*(\sF,\CE)$.
     \begin{lemma}\label{cohY_I} Let $I \subsetneq \Delta$, $d_I:= \dim(Y_I)$ and $r_I:= \dim(\bP_I/\bB)$. Then, the
        cochain complex

        $$C_I^\bullet:\bigoplus_{\substack{w \in W^I \cap \,\Omega_I \\ l(w)=d_I-r_I}} H^{n-l(w)}_{C_I(w)}(\sF,\CE_\lambda) \rightarrow \bigoplus_{\substack{w \in W^I \cap \,\Omega_I \\ l(w)=d_I-r_I-1}} H^{n-l(w)}_{C_I(w)}(\sF,\CE_\lambda) \rightarrow \ldots \rightarrow  H^{n}_{C_I(e)}(\sF,\CE_\lambda), $$
        with the natural morphisms starting in degree $n+r_I-d_I$, computes the cohomology groups $H^j_{Y_I}(\sF,\CE_\lambda)$. More specifically, $H^j(C_I^\bullet)=H^j_{Y_I}(\sF,\CE_\lambda)$.
    \end{lemma}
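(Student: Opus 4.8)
The plan is to mimic the proofs of Lemma~\ref{loccocell} and Lemma~\ref{C_I(w)}: build a filtration of $Y_I$ by closed subsets whose successive strata are the generalized Schubert cells $C_I(w)$, feed it into the spectral sequence of Lemma~\ref{cousinspectral}, and observe that the sequence is so degenerate that the surviving $E_1$-row is literally the complex $C_I^\bullet$.

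First I would set up the combinatorics. By Proposition~\ref{genSchCe} we have the \emph{disjoint} decomposition $Y_I=\bigcup_{w\in W^I\cap\,\Omega_I}C_I(w)$ (the $C_I(w)$ for $w\in W^I$ are distinct $\bP_I$-orbits), with $\dim C_I(w)=l(w)+r_I$, so that $d_I-r_I=\max\{l(w)\mid w\in W^I\cap\Omega_I\}$. Using $C_I(w)=\bigsqcup_{v\in W_I}C(vw)$ and $l(vw)=l(v)+l(w)$ for $v\in W_I,\ w\in W^I$, together with $\overline{C(u)}=\bigsqcup_{u'\le u}C(u')$, one obtains the closure formula $\overline{C_I(w)}=\bigsqcup_{v\in W^I,\ v\le w}C_I(v)$. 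Since $Y_I$ is closed in $\sF$, this already shows that $W^I\cap\Omega_I$ is downward closed in $W^I$ for the Bruhat order: if $v\in W^I$ and $v\le w\in W^I\cap\Omega_I$ then $C_I(v)\subseteq\overline{C_I(w)}\subseteq Y_I$, hence $v\in W^I\cap\Omega_I$.

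Now for $0\le j\le n$ put
\[
   Z_j:=\bigcup_{\substack{w\in W^I\cap\,\Omega_I\\ l(w)\le d_I-r_I-j}}C_I(w)\;=\;\bigcup_{\substack{w\in W^I\cap\,\Omega_I\\ l(w)\le d_I-r_I-j}}\overline{C_I(w)},
\]
the second equality holding by the downward closedness just established. Thus each $Z_j$ is closed in $\sF$, $Z_0=Y_I$, $Z_{d_I-r_I}=C_I(e)$, $Z_j=\emptyset$ for $j>d_I-r_I$, and $Z_j\backslash Z_{j+1}=\bigsqcup_{w\in W^I\cap\Omega_I,\ l(w)=d_I-r_I-j}C_I(w)$. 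Lemma~\ref{cousinspectral} then yields a spectral sequence $E_1^{pq}=H^{p+q}_{Z_p/Z_{p+1}}(\sF,\CE_\lambda)\Rightarrow H^{p+q}_{Y_I}(\sF,\CE_\lambda)$ with natural differentials. For fixed $p$, Lemma~\ref{locrel} gives $H^{*}_{Z_p/Z_{p+1}}(\sF,\CE_\lambda)\cong H^{*}_{Z_p\backslash Z_{p+1}}(\sF\backslash Z_{p+1},\CE_\lambda)$; inside the open set $\sF\backslash Z_{p+1}$ each cell $C_I(w)$ with $l(w)=d_I-r_I-p$ is closed, since its boundary $\overline{C_I(w)}\backslash C_I(w)=\bigsqcup_{v\in W^I,\ v<w}C_I(v)$ consists of cells of strictly smaller length lying in $W^I\cap\Omega_I$, hence inside $Z_{p+1}$; as these finitely many cells are pairwise disjoint we get $H^{*}_{Z_p\backslash Z_{p+1}}(\sF\backslash Z_{p+1},\CE_\lambda)=\bigoplus_{l(w)=d_I-r_I-p}H^{*}_{C_I(w)}(\sF\backslash Z_{p+1},\CE_\lambda)$, which equals $\bigoplus_{l(w)=d_I-r_I-p}H^{*}_{C_I(w)}(\sF,\CE_\lambda)$ by Proposition~\ref{excision}. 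By Lemma~\ref{C_I(w)} this group is $\bigoplus_{w\in W^I\cap\Omega_I,\ l(w)=d_I-r_I-p}M_I(w\cdot\lambda)$, concentrated in cohomological degree $n-l(w)=n-d_I+r_I+p$; hence $E_1^{pq}=0$ unless $q=q_0:=n-d_I+r_I$.

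So the spectral sequence lives in the single row $q=q_0$, therefore degenerates at $E_2$, and $H^{p+q_0}_{Y_I}(\sF,\CE_\lambda)=E_2^{p,q_0}=H^{p}\big(E_1^{\bullet,q_0},d_1\big)$. Its $p$-th term is $\bigoplus_{l(w)=d_I-r_I-p}H^{n-l(w)}_{C_I(w)}(\sF,\CE_\lambda)$, placed in cohomological degree $p+q_0$, which runs from $n+r_I-d_I$ (at $p=0$) to $n$ (at $p=d_I-r_I$); with $d_1$ the natural morphisms this is precisely the complex $C_I^\bullet$ of the statement, and $H^j(C_I^\bullet)=H^j_{Y_I}(\sF,\CE_\lambda)$ follows. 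The only real obstacle is the point-set bookkeeping in the middle two paragraphs — that the $Z_j$ are genuinely closed in $\sF$ and that the strata split off the individual groups $H^{*}_{C_I(w)}(\sF,\CE_\lambda)$ — which is exactly where the closure formula for $C_I(w)$ and the downward closedness of $W^I\cap\Omega_I$ enter; everything after that is the automatic collapse of a one-row spectral sequence together with the computation already established in Lemma~\ref{C_I(w)}.
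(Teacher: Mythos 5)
Your proof is correct and follows essentially the same route as the paper's: build a filtration of $Y_I$ by closed subsets whose successive strata are the generalized Schubert cells $C_I(w)$ with $w\in W^I\cap\Omega_I$, feed it into the Cousin-type spectral sequence of Lemma~\ref{cousinspectral}, and use Lemma~\ref{C_I(w)} to see that the $E_1$-page is concentrated in the single row $q=n-d_I+r_I$. The paper obtains the filtration by intersecting $Y_I$ with the codimension filtration $\tilde Z_j$ of $\sF$ by closures of generalized Schubert cells, while you build it intrinsically and justify closedness via the closure formula $\overline{C_I(w)}=\bigsqcup_{v\in W^I,\,v\le w}C_I(v)$ and the observation that $W^I\cap\Omega_I$ is downward-closed in Bruhat order; these yield the same $Z_j$, and your version makes explicit a bookkeeping point the paper leaves implicit.
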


    \begin{proof}
    Let $\tilde{Z}_j$ be the closure of the union of those $P_I$-orbits $C_I(w)$ whose codimension is greater or equal to $j$. This defines a filtration 
    $$\sF=\tilde{Z}_0 \supset \tilde{Z}_1 \supset \ldots \supset \tilde{Z}_{n-r}$$by closed subsets. 
    Then, we consider the filtration of closed subsets on $Y_I$ induced by setting $Z_j:=Y_I\cap \tilde{Z}_{n-d_I+j}$ 
    \begin{equation}\label{fil1} 
        Y_I=Z_0\supset  Z_1 \supset \ldots \supset Z_{d_I}
    \end{equation}
    where, by Lemma \ref{genSchCe}, one has $${Z}_j\backslash {Z}_{j+1}=\bigsqcup_{\substack{w \in W^I \cap \,\Omega_I \\ l(w)=d_I-r_I-j}} C_I(w).$$
    Therefore, as in the proof of Lemma \ref{loccocell} , we obtain that 
    $$H^i_{Z_j/Z_{j+1}}(\sF,\CE_\lambda)\cong\bigoplus_{\substack{w \in W^I \cap \,\Omega_I \\ l(w)=d_I-r_I-j}} H^i_{C_I(w)}(\sF,\CE_\lambda).$$ 
    for all integers $i$. Applying Lemma \ref{cousinspectral} to the filtration (\ref{fil1}) and $\CE_\lambda$, and taking Lemma \ref{C_I(w)} into account, we see that the induced spectral sequence 
    $$E_1^{pq}=H^{p+q}_{Z_p/Z_{p+1}}(X, \CE_\lambda)\Rightarrow H^{p+q}_{Y_I}(X,\CE_\lambda)$$ degenerates at the $E_2$-page. Thus, the result follows. 
\end{proof}

\begin{remark}\label{morphismBGG}
    As pointed out in \cite[Theorem 2]{MR}, the morphisms $$H^{n-l(w)}_{C_I(w)}(\sF, \CE_\lambda) \rightarrow H^{n-l(w')}_{C_I(w')}(\sF, \CE_\lambda)$$ for $w' \leq w$ with $l(w)=l(w')+1$ that appear in the differentials are those
    from the Lepowsky BGG resolution (cf. \cite[Theorem 4.3]{Le1}). 
\end{remark}

\begin{corollary}\label{loccohcatOp}
        For each $i \in \BN_0$, the $U(\fkg)$-module $H_{Y_I}^i(\sF,\CE_\lambda)$  lies in $\CO_{\alg}^{\fkp_I}$.
\end{corollary}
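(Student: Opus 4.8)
The plan is to read the statement off from Lemma~\ref{cohY_I}. By that lemma, $H^j_{Y_I}(\sF,\CE_\lambda)$ is the $j$-th cohomology of the cochain complex $C_I^\bullet$, each term of which is a finite direct sum of groups of the form $H^{n-l(w)}_{C_I(w)}(\sF,\CE_\lambda)$ with $w \in W^I \cap \Omega_I$. By Lemma~\ref{C_I(w)}, every such group is isomorphic, as a $U(\fkg)$-module, to a generalized parabolic Verma module $M_I(w\cdot\lambda)$, which by construction (cf. Example~\ref{genVM}) lies in $\CO^{\fkp_I}_{\alg}$. Hence all the terms of $C_I^\bullet$ lie in $\CO^{\fkp_I}_{\alg}$.

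Next I would observe that $C_I^\bullet$ is in fact a complex \emph{in} the category $\CO^{\fkp_I}_{\alg}$. Its differentials are the natural morphisms coming from the spectral sequence of Lemma~\ref{cousinspectral}, i.e.\ connecting homomorphisms for local cohomology; since $\CE_\lambda$ carries a natural $\fkg$-module structure (cf.\ \cite[Section 1.2]{O2}) and local cohomology is functorial for $\fkg$-modules, these differentials are $U(\fkg)$-linear, exactly as in the proof of Lemma~\ref{loccocell}. Now, as recalled after Definition~\ref{catOp}, $\CO^{\fkp_I}$ is an abelian, artinian and noetherian full subcategory of $\operatorname{Mod}U(\fkg)$ closed under passing to submodules and quotients; the same holds for $\CO^{\fkp_I}_{\alg}$, since every simple $\fkl_{\bP_I,L}$-subquotient of a finite-dimensional algebraic $\bL_{\bP_I,L}$-representation is again algebraic, so the defining condition of $\CO^{\fkp_I}_{\alg}$ passes to subquotients. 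Consequently $\CO^{\fkp_I}_{\alg}$ is closed under subquotients.

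It follows that the kernel, image, and cokernel of every differential of $C_I^\bullet$ — and hence the cohomology $H^j(C_I^\bullet)$ at each spot — lie in $\CO^{\fkp_I}_{\alg}$. Combining this with the identification $H^j(C_I^\bullet) = H^j_{Y_I}(\sF,\CE_\lambda)$ from Lemma~\ref{cohY_I} yields the claim. I do not expect any real obstacle here: the argument is formal given the two preceding lemmas, and the only point deserving a moment's care is the verification that the differentials of $C_I^\bullet$ are genuine $U(\fkg)$-module maps rather than merely $L$-linear, which I would settle by the functoriality of the Cousin-type spectral sequence of Lemma~\ref{cousinspectral} in the coefficient sheaf $\CE_\lambda$.
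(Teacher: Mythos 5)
Your argument is correct and is essentially the paper's own proof, only spelled out in more detail: the paper likewise deduces the corollary directly from Lemma~\ref{C_I(w)} and Lemma~\ref{cohY_I} together with the closure of $\CO^{\fkp_I}_{\alg}$ under submodules and quotients. The extra care you take in verifying that the differentials of $C_I^\bullet$ are $U(\fkg)$-linear is a reasonable supplement but not a deviation from the paper's route.
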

    
\begin{proof}
        As the category $\CO_{\alg}^{\fkp_I}$ is closed under taking submodules and quotients, this follows immediately from Lemma \ref{C_I(w)} and \ref{cohY_I}.
\end{proof}

\subsection{Analytic local cohomology}\label{s:AnaLocCo} In the following, we would like to relate the algebraic local cohomology groups of $\sF$ with support in the $Y_I$ and coefficients in $\CE_\lambda$ to some analytic local cohomology groups of $\sF^{\rig}.$ \\

For this, we recall first from \cite[Section 1.3]{O2} what we mean by analytic local cohomology. Let $X$ be a rigid analytic variety over $K$ and $U \subset X$ an admissible open subset with $Z:=X\backslash U$, the set theoretical complement. Further, be $\CE$ be a coherent sheaf on $X$. 
Then, similar to the last section, we define 
$$ \Gamma_Z(X,\CE):=\ker\big(\Gamma(X,\CE)\rightarrow \Gamma(U,\CE)\big) $$ 
and $H^*_Y(X,\CG)$ to be the right derived functors. In case $X$ is a separated rigid analytic variety of countable type, the local cohomology groups carry a natural structure of a locally convex $K$-vector space which is in general not Hausdorff (cf. \cite[Section 1.3]{O2}, \cite[Section 1.6]{vP}). \\

For our purposes, we fix an embedding $\sF \hookrightarrow{} \BP_{K}^N$ defined by the vanishing ideal $\sI \subset \CO_K[T_0,\ldots, T_N]$. We introduce, adapted from \cite[Section 2, p. 1398]{O6}, the notion of special neighborhoods of 
a closed subvariety of $\sF$. They play a crucial role in the computation of the cohomology of a period domain. 
\begin{definition}\label{epsnbh}
Let $\epsilon \in \lb \overline{K}^\times \rb$. Let $Y \subset \sF$ be a closed subvariety 
and $f_1, \ldots, f_r \in \CO_K[T_0,\ldots, T_N]$ homogeneous polynomials such that they generate the vanishing ideal of the Zariski closure of $Y$ in $\sF_{\CO_K}$. Additionally, each $f_i$ has at least one coefficient in $\CO_K^\times$. 
\begin{enumerate}[label=\roman*)]
    \item  We call a tuple $(z_0,\ldots, z_{N}) \in \BA_K^{N+1}(C)$ \textsl{unimodular} if $z_i \in \CO_{C}$ 
           for all $i$ and there exists an $i$ such that $z_i \in \CO_{C}^\times$.
    \item  We define the \textsl{open $\epsilon$-neighborhood} of $Y$ in $\sF^{\rig}$ by 
\begin{align*}
    Y(\epsilon):=\Big\{z \in \sF^{\rig} \, \Big\vert \, &\text{for any unimodular representative $\tilde{z}$ of $z$, we have } \\ &\lb f_j(\tilde{z})\rb \leq \epsilon \text{ for all $j$}\Big\}.
\end{align*}
\item We define the \textsl{closed $\epsilon$-neighborhood} of $Y$ in $\sF^{\rig}$ by 
\begin{align*}
    Y^{-}(\epsilon):=\Big\{z \in \sF^{\rig} \, \Big\vert \, &\text{for any unimodular representative $\tilde{z}$ of $z$, we have } \\ &\lb f_j(\tilde{z})\rb < \epsilon \text{ for all $j$}\Big\}.
\end{align*}
\end{enumerate}
\end{definition}
Let $I \subsetneq \Delta$. Let $\Phi_{\fku_{\bP_I}^-}=\{{\alpha_1}, \ldots, {\alpha_r}\}$ be the set of roots appearing in $\fku_{\bP_I}^-$ (under the adjoint action of $\bT$) and $y_{\alpha_1}, \ldots, y_{\alpha_r}$ be a basis of the $K$-vector space $\fku_{\bP_I}^-$. Then, for $\epsilon \in \lb \overline{K^*} \rb$, the norm ${\lb \enspace\, \rb}_\epsilon$ on $U(\fku_{\bP_I}^-)$ is given by 
\begin{equation}\label{normUp}
   \Bigg \lb \sum_{(i_1,\ldots,i_r) \in \BN_0^r} a_{i_1,\ldots,i_r}y_{\alpha_1}^{i_1}\cdots  y_{\alpha_r}^{i_r} \Bigg\rb_\epsilon= \sup_{(i_1,\ldots,i_r) \in \BN_0^r}\Big\lb i_1! \cdots i_r! \cdot a_{i_1,\ldots,i_r}\Big\rb \epsilon^{i_1 + \ldots +i_r}.
   \end{equation}
Completing $U(\fku_{\bP_I}^-)$ with respect to ${\lb \enspace\, \rb}_\epsilon$ yields the $K$-Banach space
 \begin{align}\label{completeUEA}
   U(\fku_{\bP}^-)_\epsilon:=\Bigg\{&\sum_{(i_1,\ldots,i_r) \in \BN_0^r} a_{i_1,\ldots,i_r}y_{\alpha_1}^{i_1}\cdots  y_{\alpha_r}^{i_r}\, \bigg\vert \, a_{i_1,\ldots,i_r} \in K, \nonumber \\
   &\lb i_1! \cdots i_r! \cdot a_{i_1,\ldots,i_r}\rb \epsilon^{i_1 + \ldots +i_r} \rightarrow 0 \text{ for } i_1+\ldots + i_r \rightarrow 0
   \Bigg\}.
\end{align}
Let $m\in \BN$ and $\epsilon_m:=\lb \pi \rb^m$. We will write $U(\fku_{\bP_I}^-)_m$ for $U(\fku_{\bP_I}^-)_{\frac{1}{\epsilon_m}}$. Let $i \in \BN_0$.  We know from Corollary \ref{loccohcatOp} that $H^i_{Y_I}(\sF,\CE_\lambda) \in \mathcal{O}_{\alg}^{\fkp_I}$. Thus, we have an $\CO^{\fkp_I}_{\alg}$-pair $(H^i_{Y_I}(\sF,\CE_\lambda),W)$ with a short exact sequence 
\begin{equation}\label{seqCoh}
      0 \rightarrow \fkd \rightarrow U(\fku_{\bP_I}^-) \otimes_{K} W \longrightarrow H^i_{Y_I}(\sF,\CE_\lambda) \rightarrow 0.
\end{equation}
Then, the above defines a norm on $U(\fku_{\bP_I}^-) \otimes_{K} W$ and induces, by (\ref{seqCoh}), the quotient norm on $H^i_{Y_I}(\sF,\CE_\lambda)$. 
Since the quotient map is open, it is strict (cf. \cite[Section 1.1.9, Proposition 3 ii)]{BGR}) and we obtain, by \cite[Section 1.1.9, Corollary 6]{BGR}, a short exact sequence of $K$-Banach spaces
\begin{equation}\label{seqComplete}
    0 \longrightarrow \fkd_{m} \longrightarrow U(\fku_{\bP_I}^-)_m \otimes_K W \longrightarrow \hat{H}^i_{Y_I,m} \longrightarrow 0
\end{equation}
with $\fkd_{m}$ the completion of $\fkd$ in $U(\fku_{\bP_I}^-)_m$ and $\hat{H}^i_{Y_I,m}$ denotes the completion of $H^i_{Y_I}(\sF,\CE_\lambda)$ with respect to the quotient norm. \\

Moreover, we define $D_w:=ww_0C(w_0)$ and $H_w:=\sF \backslash D_w$ for $w \in W$. By Lemma \ref{complement}, we know that 
\begin{equation}\label{affinecovering}
    \sF \backslash Y_I=\bigcup_{w \in W \backslash \Omega_I} D_w 
\end{equation}
which is an affine open covering of the complement because $C(w_0)$ is affine open. 
Thus, we can compute $H^*(\sF \backslash Y_I,\CE_\lambda)$ by the \v{C}ech-complex 
$$ \bigoplus_{w \in W \backslash \Omega_I} \Gamma(D_{w}, \CE_\lambda) \rightarrow \bigoplus_{\substack{w, w' \in W \backslash \Omega_I \\ w \neq w'}} \Gamma(D_{w} \cap D_{w'} , \CE_\lambda)  \rightarrow \ldots \rightarrow \Gamma(\bigcap_{w \in W \backslash \Omega_I} D_{w} , \CE_\lambda).$$ Furthermore, we can easily deduce from (\ref{affinecovering}) that for $\epsilon \in \lb \overline{K^\times} \rb$, we have 
$$ Y_I^-(\epsilon)= \bigcap_{w \in W \backslash \Omega_I} H_w^-(\epsilon).$$ 
Therefore, we consider the subset 
$$ D_{w,\epsilon}:=\sF^{\rig} \backslash H_w^-(\epsilon)$$
for $w \in W$. 

\begin{lemma} Let $w \in W$. The subset $D_{w,\epsilon}$ is affinoid. 
\end{lemma}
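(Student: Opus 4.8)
The plan is to show that, in the chosen projective model of $\sF$, the set $D_{w,\epsilon}$ is nothing but a closed polydisc cut out on a standard coordinate chart, and is therefore affinoid. The first step is to recall, as in the proof of Lemma~\ref{complement}, that $D_w=ww_0C(w_0)$ is Kempf's coordinate neighborhood of $w\bB/\bB$: it is isomorphic to $\BA^n$, and its complement $H_w=\sF\setminus D_w$ is, with its reduced structure, the $ww_0$-translate of the union of the $|\Delta|$ Schubert divisors. I would arrange the embedding $\sF\hookrightarrow\BP^N_K$ so that the big Bruhat cell is the non-vanishing locus of an extremal weight vector which is one of the homogeneous coordinates — concretely, via a weight basis of $H^0(\sF,\CL_\rho)$. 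Then $D_w=\sF\cap\{T_{\nu_w}\neq 0\}$, where $\nu_w$ is the coordinate attached to $w\bB/\bB$, and because $\langle\rho,\varpi_\alpha\rangle=1$ for every simple root $\alpha$ the divisor of $T_{\nu_w}$ on $\sF$ is the reduced sum of the relevant Schubert divisors; hence $H_w=\sF\cap\{T_{\nu_w}=0\}$ as a reduced subscheme, and the vanishing ideal of $\overline{H_w}$ in $\sF_{\CO_K}$ is generated, modulo $\sI$, by the single coordinate $T_{\nu_w}$.

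Granting this, the remainder is a short calculation. Since the conditions in Definition~\ref{epsnbh} attached to generators of $\sI$ are vacuous on $\sF^{\rig}$, one has $H_w^-(\epsilon)=\{z\in\sF^{\rig}\mid |T_{\nu_w}(\tilde z)|<\epsilon\}$ for any (equivalently, every) unimodular representative $\tilde z$, whence
$$D_{w,\epsilon}=\sF^{\rig}\setminus H_w^-(\epsilon)=\{z\in\sF^{\rig}\mid |T_{\nu_w}(\tilde z)|\geq\epsilon\},$$
and since $|T_{\nu_w}(\tilde z)|\geq\epsilon>0$ forces $T_{\nu_w}(z)\neq 0$ we get $D_{w,\epsilon}\subseteq\{T_{\nu_w}\neq 0\}^{\rig}\cap\sF^{\rig}=D_w^{\rig}$. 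Passing to the standard affine chart $\BA^N=\{T_{\nu_w}\neq 0\}\subset\BP^N$ with coordinates $y_i=T_i/T_{\nu_w}$, in which $D_w$ is a closed subvariety of $\BA^N$, a unimodular representative $\tilde z=(z_0,\ldots,z_N)$ of a point $z\in D_w^{\rig}$ satisfies $\max_i|z_i|=1$, so $|T_{\nu_w}(\tilde z)|=|z_{\nu_w}|$ and $\max_i|y_i(z)|=|z_{\nu_w}|^{-1}$; hence $|T_{\nu_w}(\tilde z)|\geq\epsilon$ if and only if $\max_i|y_i(z)|\leq\epsilon^{-1}$. Therefore $D_{w,\epsilon}=D_w^{\rig}\cap B$, where $B$ is the closed polydisc of radius $\epsilon^{-1}$ in $(\BA^N)^{\rig}$. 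Being the zero locus of an ideal inside the Tate algebra of $B$ — a quotient of an affinoid algebra — $D_{w,\epsilon}$ is affinoid.

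The technical heart, and the step I expect to cause the most trouble, is the reduction in the first paragraph: establishing that $H_w$ is cut out by a single homogeneous coordinate in the chosen embedding. This needs (i) the standard but not formal fact that the big Bruhat cell is the non-vanishing locus of an extremal weight vector, so that $D_w$ is literally a coordinate chart of $\sF$, and (ii) the reducedness of the corresponding hyperplane section, which forces one to use $\CL_\rho$ rather than an arbitrary ample bundle and to invoke projective normality of $\sF$ in this embedding so that its homogeneous coordinate ring computes all the $H^0(\sF,\CL_{k\rho})$. This reduction is genuinely essential: if instead $H_w$ were a bona fide intersection of several hypersurfaces, $D_{w,\epsilon}$ would only be a finite union of affinoid subdomains of $D_w^{\rig}$, and such a union need not be affinoid — already the model set $\{(t_1,t_2)\mid |t_1|\leq 1,\ |t_2|\leq 1,\ \max(|t_1|,|t_2|)\geq\epsilon\}$ has non-vanishing first coherent cohomology for $\epsilon<1$. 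Once the principal description of $H_w$ is in place the argument above is purely formal, and as a by-product it exhibits $\{D_{w,\epsilon}\}_{\epsilon}$ as an increasing admissible affinoid covering of $D_w^{\rig}$.
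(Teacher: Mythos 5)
Your proof is correct modulo one small notational slip, and it reaches the conclusion by a genuinely different route than the paper. Both proofs hinge on the same key fact, namely that $H_w$ is cut out of $\sF$ by a single homogeneous form; but from there they diverge. The paper simply asserts the single-generator property for the fixed embedding, passes to a Veronese re-embedding so that the generator becomes a hyperplane $H_0$ in a bigger projective space, quotes Schneider--Stuhler's result that $(\BP^{N_0}_K)^{\rig}\setminus H_0^-(\epsilon)$ is affinoid, and concludes by intersecting with $\sF^{\rig}$. You instead \emph{choose} the embedding to be the one given by a weight basis of $H^0(\sF,\CL_\rho)$, so that $H_w$ is literally the vanishing locus of a single homogeneous coordinate, and then identify $D_{w,\epsilon}$ as a Zariski-closed subvariety of a closed polydisc by an explicit chart computation, from which affinoid-ness is immediate. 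Your route is more self-contained (no appeal to Schneider--Stuhler) and makes explicit why the vanishing ideal is principal, a point the paper takes for granted; the paper's route is shorter once the single-generator claim is granted and does not require tailoring the projective embedding. Your remark that a bona fide union of affinoid Laurent-type pieces would not in general be affinoid is exactly the right thing to worry about and correctly isolates the load-bearing step.

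The slip: the numerical fact you need for reducedness of the divisor is $\langle\rho,\alpha^\vee\rangle=1$ for all $\alpha\in\Delta$ (equivalently, $\rho$ is the sum of the fundamental weights with coefficient one, so each translated Schubert divisor occurs with multiplicity one in $\div(T_{\nu_w})$), not $\langle\rho,\varpi_\alpha\rangle=1$. In the paper's notation $\varpi_\alpha$ is the fundamental \emph{co}weight, and $\langle\rho,\varpi_\alpha\rangle$ is the coefficient of $\alpha$ in the simple-root expansion of $\rho$, which is generally not $1$. The correct pairing is what your argument actually uses, so this is purely typographical.
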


\begin{proof}
    Since $H_w$ is of codimension 1, there is $f \in \CO_K[T_0,\ldots, T_N]$ homogenous of degree $t$ with at least one coefficient in $\CO_K^{\times}$ and generating the vanishing ideal of the Zariski closure of $H_w$ in $\sF_{\CO_K}$. Let $N_0:= \binom{n+t}{t}-1$. We embed $ \BP^N_K$ into $\BP^{N_0}_K$ via the $t$-th Veronese embedding. Then, by substituting monomials, $f$ yields a homogeneous linear polynomial $g \in \CO_K[T_0,\ldots, T_{N_0}]$ defining a hyperplane $H_0 \subset \BP^{N_0}_K$, such that $$H_w=\sF \cap H_0.$$ It is known that $(\BP^{N_0}_K)^{\rig}\backslash H_0^-(\epsilon)$ is affinoid (cf. \cite[Section 1, Proof of Proposition 4]{SS}) and we notice that 
    $$ \sF^{\rig}\backslash \Big(\sF^{\rig} \cap H_0^-(\epsilon)\Big) = \sF^{\rig}\cap \Big((\BP^{N_0}_K)^{\rig}\backslash H_0^-(\epsilon)\Big).$$ Thus, $\sF^{\rig}\backslash \Big(\sF^{\rig} \cap H_0^-(\epsilon)\Big)$
    is also affinoid since it is a zero set in $(\BP^{N_0}_K)^{\rig}\backslash H_0^-(\epsilon)$. However, we have $\sF^{\rig} \cap H_0^-(\epsilon)=H_w^-(\epsilon)$. 
\end{proof}
This results in the affinoid covering 
$$\sF^{\rig} \backslash Y_I^-(\epsilon)= \bigcup_{w \in W \backslash \Omega_I} D_{w,\epsilon}$$
which has two consequences. On the one hand we get an admissible covering 
$$\sF^{\rig} \backslash Y_I(\epsilon_m)=\bigcup_{{\substack{\epsilon \rightarrow \epsilon_m \\ \epsilon_m < \epsilon \in \lb \overline{K^\times}\rb}}} \sF^{\rig} \backslash Y_I^-(\epsilon) $$
by quasi-compact admissible open subsets (cf. \cite[Section 1.3, p. 601]{O2}).
On the other hand we can compute $H^*(\sF^{\rig} \backslash Y_I^-(\epsilon),\CE_\lambda)$ by the \v{C}ech-complex 
$$ \bigoplus_{w \in W \backslash \Omega_I} \Gamma(D_{w,\epsilon}, \CE_\lambda) \rightarrow \bigoplus_{\substack{w, w' \in W \backslash \Omega_I \\ w \neq w'}} \Gamma(D_{w,\epsilon} \cap D_{w',\epsilon} , \CE_\lambda)  \rightarrow \ldots \rightarrow \Gamma(\bigcap_{w \in W \backslash \Omega_I} D_{w,\epsilon} , \CE_\lambda)
$$ which terms are $K$-Banach spaces. So far, we have to make the following assumption and conjecture (cf. \cite[Proposition 2.5.2]{Li} and \cite[Lemma 3.4.10]{Li} for the Drinfeld case). 
\begin{hac}\label{hypo1}
   Let $I \subsetneq \Delta$ and $i \in \BN_0$. Both, the cohomology groups $H^i(\sF^{\rig} \backslash Y_I^-(\epsilon),\CE_\lambda)$, and $H^i_{Y_I^-(\epsilon)}(\sF^{\rig},\CE_\lambda)$ are
   $K$-Banach spaces in which the algebraic cohomology group $H^i(\sF \backslash Y_I,\CE_\lambda)$ and $H^i_{Y_I}(\sF,\CE_\lambda)$, respectively, is a dense subspace. 
   Moreover, we have an isomorphism of topological $K$-vector spaces 
   $$ \varprojlim_{m \in \BN} H^i_{Y_I^-(\epsilon_m)}(\sF^{\rig},\CE_\lambda) \cong \varprojlim_{m \in \BN} \hat{H}^i_{Y_I,m}.$$ 
\end{hac}

\begin{corollary} \label{limitloc} Let $I \subsetneq \Delta$. For $i \in \BN_0$, we have the following isomorphisms of topological $K$-vector spaces:
    \begin{align*}
        H^i(\sF^{\rig} \backslash Y_I(\epsilon_m),\CE_\lambda)&\cong \varprojlim_{\substack{\epsilon \rightarrow \epsilon_m \\ \epsilon_m < \epsilon \in \lb\overline{K^\times}\rb}} H^i(\sF^{\rig} \backslash Y_I^-(\epsilon),\CE_\lambda) \\
       \intertext{and} 
        H^i_{Y_I(\epsilon_m)}(\sF^{\rig},\CE_\lambda)&\cong \varprojlim_{\substack{\epsilon \rightarrow \epsilon_m \\ \epsilon_m < \epsilon \in \lb \overline{K^\times}\rb}} H^i_{Y_I^{-}(\epsilon)}(\sF^{\rig},\CE_\lambda).
    \end{align*}
\end{corollary}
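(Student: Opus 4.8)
The plan is to reduce both isomorphisms to the vanishing of a single $\varprojlim^{1}$-term, which is precisely what Assumption \ref{hypo1} supplies. First I would fix a strictly decreasing sequence $\epsilon^{(1)}>\epsilon^{(2)}>\cdots$ in $\lb\overline{K}^{\times}\rb$ with $\epsilon^{(k)}\to\epsilon_{m}$; it is cofinal in $\{\,\epsilon\in\lb\overline{K}^{\times}\rb\mid \epsilon_{m}<\epsilon\,\}$, so all limits over $\epsilon$ may be computed along $k$. Put $U_{k}:=\sF^{\rig}\backslash Y_{I}^{-}(\epsilon^{(k)})$, so that $U_{k}\subseteq U_{k+1}$ and, by the discussion preceding Assumption \ref{hypo1}, $\{U_{k}\}_{k}$ is an admissible covering of $\sF^{\rig}\backslash Y_{I}(\epsilon_{m})$ by quasi-compact admissible open subsets. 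For an increasing admissible covering of this kind there is, for every $i$, the Milnor exact sequence
\begin{equation*}
 0\longrightarrow {\varprojlim_{k}}^{1} H^{i-1}(U_{k},\CE_\lambda)\longrightarrow H^{i}(\sF^{\rig}\backslash Y_{I}(\epsilon_{m}),\CE_\lambda)\longrightarrow \varprojlim_{k} H^{i}(U_{k},\CE_\lambda)\longrightarrow 0
\end{equation*}
(cf. \cite[Section 1.3]{O2}, \cite{vP}). By Assumption \ref{hypo1} each $H^{j}(U_{k},\CE_\lambda)$ is a $K$-Banach space containing the algebraic cohomology group $H^{j}(\sF\backslash Y_{I},\CE_\lambda)$ as a dense subspace, via the natural map obtained by analytifying and restricting; since these maps are compatible with the transition maps $H^{j}(U_{k+1},\CE_\lambda)\to H^{j}(U_{k},\CE_\lambda)$, the latter have dense image. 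An inverse system of $K$-Banach spaces with dense transition maps has vanishing $\varprojlim^{1}$, so the Milnor sequence collapses to the first isomorphism; the target carries the projective limit (Fr\'echet) topology, and that the map is a homeomorphism is then part of the standard package (cf. \cite[Section 1.3]{O2}).

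For the second isomorphism I would compare, for each $k$, the long exact local cohomology sequence of $(\sF^{\rig},\sF^{\rig}\backslash Y_{I}^{-}(\epsilon^{(k)}))$,
\begin{equation*}
 \cdots\to H^{i}_{Y_{I}^{-}(\epsilon^{(k)})}(\sF^{\rig},\CE_\lambda)\to H^{i}(\sF^{\rig},\CE_\lambda)\to H^{i}(\sF^{\rig}\backslash Y_{I}^{-}(\epsilon^{(k)}),\CE_\lambda)\to H^{i+1}_{Y_{I}^{-}(\epsilon^{(k)})}(\sF^{\rig},\CE_\lambda)\to\cdots,
\end{equation*}
with the corresponding sequence for $Y_{I}(\epsilon_{m})$; the inclusion $Y_{I}(\epsilon_{m})\subseteq Y_{I}^{-}(\epsilon^{(k)})$ together with the restriction $\sF^{\rig}\backslash Y_{I}^{-}(\epsilon^{(k)})\subseteq\sF^{\rig}\backslash Y_{I}(\epsilon_{m})$ induces a morphism from the $Y_{I}(\epsilon_{m})$-sequence into the $k$-th one. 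Passing to $\varprojlim_{k}$ keeps the $k$-th sequences exact: the term $H^{i}(\sF^{\rig},\CE_\lambda)$ is independent of $k$ (by rigid GAGA it equals the finite-dimensional $H^{i}(\sF,\CE_\lambda)$), hence Mittag--Leffler; $\varprojlim^{1}_{k}H^{i}(\sF^{\rig}\backslash Y_{I}^{-}(\epsilon^{(k)}),\CE_\lambda)=0$ by the first step; and $\varprojlim^{1}_{k}H^{i}_{Y_{I}^{-}(\epsilon^{(k)})}(\sF^{\rig},\CE_\lambda)=0$ by the same Banach-with-dense-subspace argument, now using that Assumption \ref{hypo1} makes $H^{i}_{Y_{I}}(\sF,\CE_\lambda)$ dense in the Banach space $H^{i}_{Y_{I}^{-}(\epsilon^{(k)})}(\sF^{\rig},\CE_\lambda)$. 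After identifying $\varprojlim_{k}H^{i}(\sF^{\rig}\backslash Y_{I}^{-}(\epsilon^{(k)}),\CE_\lambda)$ with $H^{i}(\sF^{\rig}\backslash Y_{I}(\epsilon_{m}),\CE_\lambda)$ by the first isomorphism, the comparison morphism is the identity on the terms $H^{i}(\sF^{\rig},\CE_\lambda)$ and an isomorphism on the terms $H^{i}(\sF^{\rig}\backslash Y_{I}(\epsilon_{m}),\CE_\lambda)$, so the five lemma yields the second isomorphism, again with the topological refinement following as before.

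The one genuinely delicate point is the vanishing of $\varprojlim^{1}$; everything else (the Milnor sequence, the local cohomology long exact sequences, the five lemma, and the reduction to a cofinal sequence) is formal. So I expect the main obstacle to be the careful verification that the density statements in Assumption \ref{hypo1} are compatible with the restriction maps -- i.e.\ that one really obtains towers of $K$-Banach spaces whose transition maps have \emph{dense} image -- together with citing the precise criterion forcing $\varprojlim^{1}$ to vanish for such towers; the purely topological content of the isomorphisms is likewise bookkeeping within the framework of \cite[Section 1.3]{O2}.
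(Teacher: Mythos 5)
Your argument is correct and follows essentially the same route as the paper, whose proof consists of the single citation ``same as \cite[Lemma 1.3.2]{O2}'': choose a cofinal decreasing sequence $\epsilon^{(k)}\to\epsilon_m$, write $\sF^{\rig}\setminus Y_I(\epsilon_m)$ as the increasing admissible union of the quasi-compact opens $\sF^{\rig}\setminus Y_I^-(\epsilon^{(k)})$, invoke the Milnor/derived-$\varprojlim$ exact sequence, and kill the $\varprojlim^1$-term via the topological Mittag--Leffler property supplied by the density statements of Assumption \ref{hypo1}. Your handling of the local-cohomology version via the morphism of long exact sequences, rigid GAGA for the middle terms, and the five lemma is a legitimate way to package the second isomorphism; it amounts to the same bookkeeping that Orlik's lemma encodes.
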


\begin{proof}
    The proof is same as that of \cite[Lemma 1.3.2]{O2}. 
\end{proof}


Let $\bG_0$ be a split reductive group model of $\bG$ over $\CO_K$ with Borel pair $(\bT_0, \bB_0)$ and a standard parabolic subgroup $\bP_{I,0}$ containing $\bB_0$ such that the base change to $K$ yields the pair $(\bT,\bB)$ and $\bP_I$, respectively. 
For any positive number $m \in \BN$ let 
\begin{equation*}
p_m:\bG_0(\CO_K) \rightarrow \bG_0(\CO_K/(\pi^m))
\end{equation*}
be the natural reduction map. Then, we set $G_0:=\bG_0(\CO_K)$ and define $$P_I^m:=p_m^{-1}(\bP_{I,0}(\CO_K/(\pi^m)) \subset G_0.$$ Notice that $G_0$ is compact. Moreover, let $\sF_{\mathcal{O}_K}:=\bG_0/\bB_0$.
 
\begin{lemma}Let $I \subsetneq \Delta$ and $m \in \BN$. Then, the subset $Y_I(\epsilon_m)$ is $P_I^m$- invariant. 
\end{lemma}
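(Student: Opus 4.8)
The plan is to unwind Definition \ref{epsnbh} and reduce the assertion to a short non-archimedean estimate on the polynomials cutting out $Y_I$, exploiting that the integral Zariski closure of $Y_I$ is stabilised by the integral parabolic $\bP_{I,0}$, while an element of $P_I^m$ acts, in suitable homogeneous coordinates, by an $\CO_K$-matrix congruent modulo $\pi^m$ to one coming from $\bP_{I,0}(\CO_K)$. The first step I would take is to fix the integral geometry compatibly with the group action: I choose the embedding $\sF\hookrightarrow\BP^N_K$ to be the one attached to a very ample $\bG_0$-linearised line bundle on $\sF_{\CO_K}=\bG_0/\bB_0$ (for instance $\CL_\nu$ with $\nu$ regular dominant, whose space of global sections is free of rank $N+1$ over $\CO_K$ by Kempf vanishing). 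Then $\sI\subset\CO_K[T_0,\dots,T_N]$ is the homogeneous ideal of the $\CO_K$-model $\sF_{\CO_K}\subset\BP^N_{\CO_K}$, and $\bG_0$ acts on $(T_0:\dots:T_N)$ through a homomorphism of $\CO_K$-group schemes $\bG_0\to\GL_{N+1,\CO_K}$; in particular each $g\in G_0=\bG_0(\CO_K)$ is realised by a matrix $A_g\in\GL_{N+1}(\CO_K)$, and since $A_g,A_g^{-1}$ have entries in $\CO_K$ and $\det A_g\in\CO_K^\times$, the map $v\mapsto A_gv$ preserves $\CO_{C}^{N+1}$ and unimodular vectors, so $A_g\tilde z$ is a unimodular representative of $gz$ whenever $\tilde z$ is one of $z$.

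Next I would record the stability of the integral closure and turn it into the estimate. Let $\CY_I\subset\sF_{\CO_K}$ be the scheme-theoretic closure of $Y_I$; it is $\CO_K$-flat with generic fibre the $\bP_I$-stable variety $Y_I$ (\cite[Lemma 3.1]{O1}), so $Y_I$ is schematically dense in $\CY_I$ and the action morphism $\bP_{I,0}\times_{\CO_K}\CY_I\to\sF_{\CO_K}$ factors through $\CY_I$; hence $A_h$ preserves the homogeneous ideal $J\supseteq\sI$ of $\CY_I$ in $\CO_K[T_0,\dots,T_N]$ for every $h\in\bP_{I,0}(\CO_K)$. With $f_1,\dots,f_r$ as in Definition \ref{epsnbh} (so $J=(f_1,\dots,f_r)+\sI$), this gives $f_j\circ A_h=\sum_k p^h_{jk}f_k\pmod{\sI}$ with $p^h_{jk}\in\CO_K[T_0,\dots,T_N]$. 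Now let $g\in P_I^m$. Since $\bP_{I,0}$ is smooth over $\CO_K$, the reduction $\bP_{I,0}(\CO_K)\to\bP_{I,0}(\CO_K/\pi^m)$ is surjective, so I may pick $h\in\bP_{I,0}(\CO_K)$ with $g\equiv h\pmod{\pi^m}$; applying $\bG_0\to\GL_{N+1}$ over $\CO_K/\pi^m$ yields $A_g=A_h+\pi^mB$ with $B\in\mathrm{M}_{N+1}(\CO_K)$. Given $z\in Y_I(\epsilon_m)$ and a unimodular representative $\tilde z\in\CO_{C}^{N+1}$ (at which $\sI$ vanishes, as $[\tilde z]\in\sF$), for each $j$ I would write
$$
f_j(A_g\tilde z)=\Bigl(\textstyle\sum_k p^h_{jk}(\tilde z)\,f_k(\tilde z)\Bigr)+\bigl(f_j(A_g\tilde z)-f_j(A_h\tilde z)\bigr),
$$
the first bracket being $f_j(A_h\tilde z)$ with $\lb\,\cdot\,\rb\le\max_k\lb f_k(\tilde z)\rb\le\epsilon_m$ since $\lb p^h_{jk}(\tilde z)\rb\le1$ and $z\in Y_I(\epsilon_m)$, and the second bracket lying in $\pi^m\CO_{C}$ (hence of absolute value $\le\epsilon_m$) because $f_j$ has coefficients in $\CO_K$ and $A_g\tilde z-A_h\tilde z=\pi^mB\tilde z\in\pi^m\CO_{C}^{N+1}$, so a Taylor expansion of $f_j$ around $A_h\tilde z$ is divisible by $\pi^m$. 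By ultrametricity $\lb f_j(A_g\tilde z)\rb\le\epsilon_m$ for all $j$, i.e.\ $gz\in Y_I(\epsilon_m)$; applying this to both $g$ and $g^{-1}\in P_I^m$ gives $g\cdot Y_I(\epsilon_m)=Y_I(\epsilon_m)$.

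The part I expect to be the real obstacle is the first step, namely arranging that the integral model $\sF_{\CO_K}\hookrightarrow\BP^N_{\CO_K}$ carries a genuinely $\CO_K$-\emph{linear} action of $\bG_0$ (not merely a $K$-linear or projective one) and that $\CY_I$ is scheme-theoretically stable under $\bP_{I,0}$ — these are exactly what make the matrices $A_g,A_g^{-1}$ integral, put $f_j\circ A_h$ back in $\CO_K[T_0,\dots,T_N]$, and control the $\pi^m$-perturbation. Both facts are standard for flag varieties once the embedding is taken to come from a $\bG_0$-linearised line bundle, but they are the point that needs care; the perturbation estimate itself is then routine. (If the embedding fixed earlier in the paper were not of this form, one would re-choose it; this is harmless, since the filtered families $\{Y_I(\epsilon)\}_\epsilon$ attached to two admissible choices of defining polynomials are cofinal in one another, and only such families enter the subsequent limits.)
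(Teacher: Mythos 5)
Your argument is correct, but it is more computational than the paper's and relies on an extra choice that the paper avoids. The paper first observes that the $\bG_0$-action on $\sF_{\mathcal{O}_K}(\mathcal{O}_L)$ commutes with the reduction map $q_{m,L}$ to $\mathcal{O}_L/\pi^m\mathcal{O}_L$-points — a purely scheme-theoretic fact about $\bG_0/\bB_0$, independent of any projective embedding — and then, in a separate and action-free step, identifies $Y_I(\epsilon_m)(L)$ with $q_{m,L}^{-1}\big(Y_{I,0}(\mathcal{O}_L/\pi^m\mathcal{O}_L)\big)$ by comparing $[f_i(y)]$ with $f_i(q_{m,L}(y))$. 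The $P_I^m$-invariance then falls out immediately from the $\bP_{I,0}$-invariance of $Y_{I,0}$. Because the group action is never pushed through homogeneous coordinates, this works for \emph{any} embedding as in Definition~\ref{epsnbh}. You instead realise each $g\in G_0$ by a matrix $A_g$ acting on $(T_0:\cdots:T_N)$, which obliges you to choose a $\bG_0$-linearised embedding and to argue (via cofinality of the $\epsilon$-families) that this re-choice is permissible; that remark is correct but deserves to be made precise, since the individual sets $Y_I(\epsilon_m)$ and hence their $P_I^m$-module structures do depend on the embedding, and it is only in the projective limit over $m$ — which is all that is used later — that the dependence disappears. Your perturbation estimate $A_g=A_h+\pi^mB$ together with the Taylor expansion of $f_j$ is sound, but it is doing by hand what the commutativity of the reduction diagram gives for free. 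In short, the two proofs share the same two ingredients (integral $\bP_{I,0}$-stability of $Y_{I,0}$ and congruence modulo $\pi^m$); the paper decouples the action from the coordinates, making it shorter, coordinate-free, and independent of the choice of embedding, while your route is more explicit at the cost of an extra equivariance hypothesis on the ambient $\BP^N$.
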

\begin{proof}
    We identify $\sF^{\rig}$ with the closed points of $\sF$, i.e. for $x \in \sF^{\rig}$ exists a finite extension $L:=k(x)$ of $K$ such that $x \in \sF(L)$. Denote by $\lb \text{\,\,\,} \rb_L$ be the unique absolute value on $L$ which extends the valuation on $K$. 
    Since $\sF_{\mathcal{O}_K}$ is proper over $\mathcal{O}_K$, we have $\sF_{\mathcal{O}_K}(O_L)=\sF_{\mathcal{O}_K}(L)=\sF(L)$. Let 
    $$ q_{m,L}:\sF_{\mathcal{O}_K}(\mathcal{O}_L) \rightarrow \sF_{\mathcal{O}_K}(\mathcal{O}_L/\pi^m\mathcal{O}_L)$$ 
    be the natural projection. The (free) action of $\bG_0$ on $\sF_{\mathcal{O}_K}$ induces the commutative diagram 
    $$\begin{CD}
        \bG_0(\mathcal{O}_L)\times \sF_{\mathcal{O}_K}(\mathcal{O}_L) @> \mathrm{mult.} >>  \sF_{\mathcal{O}_K}(\mathcal{O}_L)\\
        @V (p_{m,L},\, q_{m,L}) V V @VV q_{m,L} V\\
        \bG_0(\mathcal{O}_L/\pi^m\mathcal{O}_L)\times \sF_{\mathcal{O}_K}(\mathcal{O}_L/\pi^m\mathcal{O}_L) @>\mathrm{mult.} >>  \sF_{\mathcal{O}_K}(\mathcal{O}_L/\pi^m\mathcal{O}_L).
    \end{CD}$$
    Moreover, it is clear that $Y_{I,0}:=\bigcup_{w \in \Omega_I} \bB_0w\bB_0/\bB_0$ is the Zariski closure of $Y_I$ in $\sF_{\mathcal{O}_K}$ defined by homogenous polynomial $f_1, \ldots, f_r \in \CO_K[T_0,\ldots, T_N]$ as in Definition \ref{epsnbh}. Then, $Y_{I,0}$ is $\bP_{I,0}$ invariant (cf. Proposition \ref{genSchCe}). Therefore, the above diagram implies that \begin{equation} P^m_I \cdot q_{m,L}^{-1}\big(Y_{I,0}(\mathcal{O}_L/\pi^m\mathcal{O}_L)\big)=q_{m,L}^{-1}\big(Y_{I,0}(\mathcal{O}_L/\pi^m\mathcal{O}_L)\big). \end{equation}
    Next we will show that
    $$ q_{m,L}^{-1}\big(Y_{I,0}(\mathcal{O}_L/\pi^m\mathcal{O}_L)\big)=\{y \in \sF_{\mathcal{O}_K}(\mathcal{O}_L) \mid \lvert f_i(y)\lvert_L\leq \epsilon_m \text{ for all }i\}=Y_I(\epsilon_m)(L).$$
    Let $y \in \sF_{\mathcal{O}_K}(\mathcal{O}_L)$. If $y \in q_{m,L}^{-1}\big(Y_{I,0}(\mathcal{O}_L/\pi^m\mathcal{O}_L)\big)$, it follows that $f_i(q_{m,L}(y))=0$ for all $i$. But $$f_i(q_{m,L}(y))=[f_i(y)]\in \mathcal{O}_L/\pi^m\mathcal{O}_L.$$ Hence, $f_i(y) \in \pi^m\mathcal{O}_L$ and $\lb f_i(y) \rb_L\leq \epsilon_m$ for all $i$. If, on the other hand, $\lb f_i(y)\rb_L\leq \epsilon_m$ for all  $i$, we deduce that $f_i(y) \in \pi^m\mathcal{O}_L$ for all $i$. Thus, $f_i(q_{m,L}(y))=[f_i(y)]=0$ for all $i$ and $y \in q_{m,L}^{-1}\big(Y_{I,0}(\mathcal{O}_L/\pi^m\mathcal{O}_L)\big)$. \\

    \noindent From that we conclude that $Y_I(\epsilon_m)(L)$ is $P_I^m$-invariant for all finite extensions $L$ of $K$. This implies that $Y_I(\epsilon_m)$ is $P_I^m$-invariant. 
\end{proof}
The previous lemma yields a natural $P^m_I$-module structure on $H^i_{Y_I(\epsilon_m)}(\sF^{\rig},\CE_\lambda)$ which we fix.

\begin{lemma} \label{Bi-Functor} For $I \subsetneq \Delta$ and $i \in \BN_0$, we have  
\begin{equation*} 
    \Big(\varprojlim_{m \in \BN} \Ind_{P_I^m}^{G_0}\big(H^i_{Y_I(\epsilon_m)}(\sF^{\rig},\CE_\lambda)\big)\Big)'= \CF_{P_I}^G\big(H^i_{Y_I}(\sF,\CE_\lambda)\big).
    \end{equation*}
\end{lemma}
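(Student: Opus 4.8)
The plan is to exhibit the left-hand side of the lemma as a dual version of the construction of the Orlik--Strauch functor $\CF^G_{P_I}$, level by level in the parameter $m$, and then to pass to the limit. To this end, write $H:=H^i_{Y_I}(\sF,\CE_\lambda)$, which lies in $\CO^{\fkp_I}_{\alg}$ by Corollary~\ref{loccohcatOp}; fix an $\CO^{\fkp_I}_{\alg}$-pair $(H,W)$ together with the exact sequences $(\ref{seqCoh})$ and $(\ref{seqComplete})$, the latter being for each $m\in\BN$ a strict short exact sequence of $K$-Banach spaces $0\to\fkd_m\to U(\fku_{\bP_I}^-)_m\otimes_K W\to \hat{H}^i_{Y_I,m}\to 0$. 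Recall from \cite[Section~4]{OSt} that $\CF^G_{P_I}(H)=\Ind^G_{P_I}(W')^{\fkd}$ is the closed $G$-subrepresentation of the compact-type space $\Ind^G_{P_I}(W')$ cut out by the $\fkd$-relations, and that the pairing defining this subspace is assembled precisely from the data $U(\fku_{\bP_I}^-)_m$ and $\fkd_m$ (cf.\ \cite[(4.4.1)]{OSt}).

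The first step is to bring in the analytic input. By Corollary~\ref{limitloc} together with the last isomorphism of Assumption~\ref{hypo1}, one has an isomorphism of topological $K$-vector spaces $\varprojlim_m H^i_{Y_I(\epsilon_m)}(\sF^{\rig},\CE_\lambda)\cong\varprojlim_m\hat{H}^i_{Y_I,m}$, and I would check that for each $m$ this is $P_I^m$-equivariant, i.e.\ that the $P_I^m$-module structure fixed above on $H^i_{Y_I(\epsilon_m)}(\sF^{\rig},\CE_\lambda)$ agrees with the one obtained by completing the $\fkg$- and $\bP_{I,0}(\CO_K)$-module structure on $H$ — here one uses that the norms $(\ref{normUp})$ on $U(\fku_{\bP_I}^-)$ are invariant under the relevant congruence subgroups. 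Since $P_I^m$ has finite index in the compact group $G_0$, the functor $\Ind^{G_0}_{P_I^m}(-)$ is exact and continuous on $K$-Banach spaces (it is a finite direct sum of copies of the argument indexed by $G_0/P_I^m$), so applying it and passing to the limit yields a $G_0$-equivariant isomorphism $\varprojlim_m\Ind^{G_0}_{P_I^m}\!\big(H^i_{Y_I(\epsilon_m)}(\sF^{\rig},\CE_\lambda)\big)\cong\varprojlim_m\Ind^{G_0}_{P_I^m}\!\big(\hat{H}^i_{Y_I,m}\big)$.

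It then remains to identify the strong dual of $\varprojlim_m\Ind^{G_0}_{P_I^m}(\hat{H}^i_{Y_I,m})$ with $\CF^G_{P_I}(H)$. Applying $\Ind^{G_0}_{P_I^m}$ to $(\ref{seqComplete})$ gives, for each $m$, a strict short exact sequence of $K$-Banach spaces; since $\fkd$ is dense in $\fkd_m$ and the transition maps of all three systems have dense image, the limit over $m$ remains exact by Mittag--Leffler, and taking strong duals of the resulting exact sequence of Fréchet spaces produces $0\to\big(\varprojlim_m\Ind^{G_0}_{P_I^m}(\hat{H}^i_{Y_I,m})\big)'\to\big(\varprojlim_m\Ind^{G_0}_{P_I^m}(U(\fku_{\bP_I}^-)_m\otimes_K W)\big)'\to\big(\varprojlim_m\Ind^{G_0}_{P_I^m}(\fkd_m)\big)'\to 0$. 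The key point, which is essentially the construction of $\CF^G_{P_I}$ in \cite{OSt} run in reverse and carried out for the Drinfeld case in \cite[Section~1.3]{O2}, is that the middle term has strong dual $\Ind^G_{P_I}(W')$ — here one uses that $G_0$ acts transitively on $\sF(\BQ_p)\cong G_0/\bP_{I,0}(\CO_K)$, so that these Banach inductions exhaust the locally analytic induced representation in the limit — and that under this duality the subobject $\varprojlim_m\Ind^{G_0}_{P_I^m}(\fkd_m)$ is carried onto the annihilator of $\Ind^G_{P_I}(W')^{\fkd}$ coming from the $\fkd$-pairing. Hence the left-hand kernel equals $\Ind^G_{P_I}(W')^{\fkd}=\CF^G_{P_I}(H)$, with its $G$-structure, and combining with the previous step proves the lemma.

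The hard part will be the identification $\big(\varprojlim_m\Ind^{G_0}_{P_I^m}(U(\fku_{\bP_I}^-)_m\otimes_K W)\big)'\cong\Ind^G_{P_I}(W')$ and the matching of the $\fkd_m$-subobjects with the $\fkd$-annihilator: this amounts to reproducing, in the present parabolic setting, the topological bookkeeping of \cite{OSt} — strictness of the Banach sequences so that strong duals remain exact, the behaviour of the projective tensor product with $W$, and the Mittag--Leffler conditions for the limits over $m$. A second, more technical point is the $P_I^m$-equivariance of the comparison isomorphism furnished by Assumption~\ref{hypo1}.
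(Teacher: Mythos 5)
Your proposal is correct and takes essentially the same route as the paper. The paper compresses steps three through five of your outline into a direct appeal to \cite[Corollary 3.12]{OSt} (after observing that passing to strong duals is exact on $K$-Fréchet spaces, \cite[Section I, Corollary 1.4]{BS}) and handles the $P_{I,0}$-equivariance of the comparison isomorphism via \cite[Proposition 1.3.10 + Proof]{O2}, whereas you sketch what would go into that corollary—the Banach induction exact sequences, Mittag--Leffler, and the identification of the $\fkd_m$-subobjects with the annihilator of $\Ind^G_{P_I}(W')^\fkd$—but these are exactly the ingredients of the cited result, so there is no genuine divergence.
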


\begin{proof}
    We know from (\ref{seqComplete}), that 
    \begin{equation*}
       \varprojlim_{m \in \BN} \big(U(\fku_{\bP_I}^-)_m \otimes_{K} W/\fkd_{m}\big) \cong  \varprojlim_{m \in \BN} \hat{H}^i_{Y_I,m}.
    \end{equation*}
    Furthermore, by Assumption \ref{hypo1} and in view of Corollary \ref{limitloc}, we have 
    $$ \varprojlim_{m \in \BN} H^i_{Y_I(\epsilon_m)}(\sF^{\rig},\CE_\lambda) \cong \varprojlim_{m \in \BN} \big(U(\fku_{\bP_I}^-)_m \otimes_{K} W/\fkd_{m}\big) $$ 
    compatible with the action of $\varprojlim_{m \in \BN} P^m_I=P_{I,0}$ (cf. \cite[Proposition 1.3.10 + Proof]{O2}).  
    Then, we get (cf. \cite[p. 633]{O2})
    \begin{equation*}
        \varprojlim_{m \in \BN} \Ind_{P_I^m}^{G_0}\big(H^i_{Y_I(\epsilon_m)}(\sF^{\rig},\CE_\lambda)\big)\cong \varprojlim_{m \in \BN} \Ind_{P_I^m}^{G_0}\big(U(\fku_{\bP_I}^-)_m \otimes_{K} W/\fkd_{m}\big).
    \end{equation*}
    Passing to the dual, which is exact on $K$-Fréchet spaces (cf. \cite[Section I, Corollary 1.4]{BS}), the required statement follows from \cite[Corollary 3.12]{OSt}. .

\end{proof}

\subsection{Results}\label{s:results}
 We start by recalling Orlik's fundamental complex on $Y_{\acute{e}t}$, the étale site on $Y$. This is taken from \cite[Section 6.2.1/6.2.2]{CDHN} which is based on \cite[Section 3]{O1}. \\

 For the constant étale sheaf $\BZ \in \Sh(Y_{\acute{e}t})$ and a closed pseudo-adic subspace $Z$ of $Y$ with inclusion $i:Z \rightarrow Y$, define $\BZ_Z:=i_*i^*(\BZ)$.

 \begin{definition}\cite[Definition 6.7]{CDHN} Let $I \subsetneq \Delta$. Define $\BZ_I \in \Sh(Y_{{\acute{e}t}})$ as the \textit{subsheaf of locally constant sections}
of $\prod_{g \in \bG/\bP_I(K)} \BZ_{gY_I^{\ad}}$, i.e. 
$$ \BZ_I= \varinjlim_{c \in \sC_I} \BZ_c$$
the limit being taken over the (pseudo-filtered) category $\sC_I$ of compact open disjoint coverings of $\bG/\bP_I(K)$ ordered by refinement where $\BZ_c$ denotes the image of the natural embedding $\bigoplus_{j \in A} \BZ_{Z_I^{T_j}} \hookrightarrow \prod_{g \in \bG/\bP_I(K)} \BZ_{gY_I^{\ad}}$ for $c=\{T_j\}_{j \in A} \in \sC_I$. 
 \end{definition}

Let $ I \subset I' \subsetneq \Delta$ and $\pi_{I,I'}:\bG/\bP_I(K)\rightarrow \bG/\bP_{I'}(K)$ be the natural surjection. Then, for all $g \in \bG/\bP_I(K)$ and $h \in \bG/\bP_{I'}(K)$, we have a natural morphism 
$\BZ_{gY_I^{\ad}} \rightarrow \BZ_{hY_{I'}^{\ad}}$ which is trivial if $\pi_{I,I'}(g) \neq h$ and otherwise, it coincides with the map induced by the closed embedding $gY_I \hookrightarrow hY_{I'}$.
Then, by definition, we get a natural morphism $$p_{I,I'}:\BZ_{I'}\rightarrow \BZ_I.$$ Fix an ordering on $\Delta$. Assuming that$\lb I' \rb- \lb I \rb =1 $ and $I'=\{\alpha_1 < \ldots < \alpha_r\}$ we set 
$$d_{I,I'}:= \left\{\begin{array}{lll} (-1)^i p_{I,I'} &\text{ if } &I'=I \cup \{\alpha_i\}, \\
                            0 &\text{ else.}  \end{array}\right.$$
This defines by standard procedure the following complex
\begin{equation}\label{fundamentalcomplex}
    0 \longrightarrow \BZ \longrightarrow \bigoplus\limits_{\substack{I \subset \Delta \\ \lb \Delta \backslash I \rb = 1 }} \BZ_I \longrightarrow \bigoplus\limits_{\substack{I \subset \Delta \\ \lb \Delta \backslash I \rb = 2 }} \BZ_I \longrightarrow \ldots \longrightarrow \bigoplus\limits_{\substack{I \subset \Delta \\ \lb \Delta \backslash I \rb = \lb \Delta \rb -1 }} \BZ_I \longrightarrow \BZ_\emptyset \longrightarrow 0
\end{equation}
on $Y_{\acute{e}t}$ 
which is acyclic by  \cite[Theorem 6.9]{CDHN}. It is referred to as the \textit{fundamental complex}. \\

Denote by $\iota:Y \hookrightarrow \sF^{\mathrm{ad}}$ the closed embedding. Then, by \cite[Exp. I, Proposition 2.3]{SGA2}, we have 
\begin{equation*}
    \Ext^*(\iota_*(\BZ_{Y}),\CE_\lambda) \cong H^*_{Y}(\sF^{\mathrm{ad}},\CE_\lambda). 
\end{equation*}
By applying $\Ext^*(\iota_*(-),\CE_\lambda)$ to the complex (\ref{fundamentalcomplex}), we get the spectral sequence 
\begin{equation}\label{spectral}
\hat{E}_1^{-p,q}=\Ext^q(\bigoplus\limits_{\substack{I \subsetneq \Delta \\ \lb \Delta \backslash I \rb = p+1 }} \iota_*(\BZ_I), \CE_\lambda) \Rightarrow \Ext^{-p+q}(\iota_*(\BZ_{Y}),\CE_\lambda)=H^{-p+q}_{Y}(\sF^{\mathrm{ad}},\CE_\lambda). 
\end{equation}
For the $\hat{E}_1$-terms, we have the following identification. 

\begin{proposition}\label{extgroups} For all $I \subsetneq \Delta$, there exists an isomorphism 
    \begin{equation*}
        \Ext^*(\iota_*(\BZ_I), \CE_\lambda) \cong \varprojlim_{m \in \BN} \Ind_{P_I^m}^{G_0} \big( H^*_{Y_I(\epsilon_m)}(\sF^{\rig},\CE_\lambda)\big).
    \end{equation*}
\end{proposition}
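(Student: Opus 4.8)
The plan is to compute $\Ext^*(\iota_*(\BZ_I),\CE_\lambda)$ from the explicit presentation of $\BZ_I$ as a filtered colimit of the sheaves $\BZ_c$ supported on the combinatorial closed subsets $Z_I^{T_j}$, and then to compare the resulting adic local cohomology on $\sF^{\ad}$ with the analytic local cohomology of the $\epsilon$-tubes $Y_I(\epsilon_m)$. First I would replace $\sC_I$ by a convenient cofinal subsystem. Since $\bG$ is split and $\bG_0/\bP_{I,0}$ is proper and smooth over $\CO_K$, the group $G_0=\bG_0(\CO_K)$ acts transitively on $\bG/\bP_I(K)=(\bG_0/\bP_{I,0})(\CO_K)$ with stabilizer $P_{I,0}=\bP_{I,0}(\CO_K)=G_0\cap\bP_I(K)$. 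The subgroups $P_I^m$ decrease to $P_{I,0}$ and form a neighbourhood basis of it in $G_0$, so the finite disjoint coverings $c_m:=\{gP_I^m/P_{I,0}\}_{g\in G_0/P_I^m}$ are cofinal in $\sC_I$. Hence $\BZ_I=\varinjlim_m\BZ_{c_m}$ (the transition maps being the inclusions of subsheaves of $\prod_g\BZ_{gY_I^{\ad}}$), and since $\iota$ is a closed immersion and direct image along a closed immersion commutes with filtered colimits, $\iota_*(\BZ_I)=\varinjlim_m\iota_*(\BZ_{c_m})$ with $\iota_*(\BZ_{c_m})=\bigoplus_{g\in G_0/P_I^m}\iota_*(\BZ_{Z_I^{gP_I^m}})$.

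For fixed $m$, each $Z_I^{gP_I^m}$ is a closed pseudo-adic subspace of $\sF^{\ad}$, so \cite[Exp. I, Proposition 2.3]{SGA2} gives $\Ext^q(\iota_*(\BZ_{Z_I^{gP_I^m}}),\CE_\lambda)\cong H^q_{Z_I^{gP_I^m}}(\sF^{\ad},\CE_\lambda)$, whence
$$\Ext^q(\iota_*(\BZ_{c_m}),\CE_\lambda)\cong\bigoplus_{g\in G_0/P_I^m}H^q_{Z_I^{gP_I^m}}(\sF^{\ad},\CE_\lambda).$$
As $\CE_\lambda=\CL_\lambda\otimes\omega_{\sF}$ is $\bG$-equivariant, translation by $g$ identifies $H^q_{Z_I^{P_I^m}}(\sF^{\ad},\CE_\lambda)$ with $H^q_{Z_I^{gP_I^m}}(\sF^{\ad},\CE_\lambda)$; since $P_I^m$ preserves the support $Z_I^{P_I^m}$ and acts $\CE_\lambda$-equivariantly, $H^q_{Z_I^{P_I^m}}(\sF^{\ad},\CE_\lambda)$ is a $P_I^m$-module, and the right-hand side is exactly $\Ind_{P_I^m}^{G_0}$ of it. It then remains to identify $H^q_{Z_I^{P_I^m}}(\sF^{\ad},\CE_\lambda)$ with $H^q_{Y_I(\epsilon_m)}(\sF^{\rig},\CE_\lambda)$ compatibly with $P_I^m$. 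For this I would use that $Y_I(\epsilon_m)$ is $P_I^m$-invariant (the Lemma above) and that the $P_I^m$-invariant hulls of the tubes $Y_I^-(\epsilon)$, $\epsilon\searrow\epsilon_m$, form a cofinal system of $P_I^m$-invariant quasi-compact open neighbourhoods of $Z_I^{P_I^m}$; the comparison of adic and rigid-analytic local cohomology over such neighbourhoods, together with Corollary \ref{limitloc} and Assumption \ref{hypo1}, then yields
$$\Ext^q(\iota_*(\BZ_{c_m}),\CE_\lambda)\cong\Ind_{P_I^m}^{G_0}\big(H^q_{Y_I(\epsilon_m)}(\sF^{\rig},\CE_\lambda)\big).$$

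Finally I would pass to the limit. Applying $\Ext^*(-,\CE_\lambda)$ to $\iota_*(\BZ_I)=\varinjlim_m\iota_*(\BZ_{c_m})$ gives a Milnor exact sequence
$$0\to\varprojlim^1_m\Ext^{q-1}(\iota_*(\BZ_{c_m}),\CE_\lambda)\to\Ext^q(\iota_*(\BZ_I),\CE_\lambda)\to\varprojlim_m\Ext^q(\iota_*(\BZ_{c_m}),\CE_\lambda)\to 0.$$
By the previous step the terms of the inverse system are $\Ind_{P_I^m}^{G_0}(H^{q-1}_{Y_I(\epsilon_m)}(\sF^{\rig},\CE_\lambda))$, which are $K$-Banach spaces by Assumption \ref{hypo1}; their transition maps have dense image because the algebraic module $H^{q-1}_{Y_I}(\sF,\CE_\lambda)$ (which lies in $\CO_{\alg}^{\fkp_I}$ by Corollary \ref{loccohcatOp}) is contained densely and compatibly in every term. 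A countable inverse system of Banach spaces with dense-image transition maps is Mittag--Leffler in the topological sense, so the $\varprojlim^1$ term vanishes and we obtain the isomorphism $\Ext^*(\iota_*(\BZ_I),\CE_\lambda)\cong\varprojlim_m\Ind_{P_I^m}^{G_0}(H^*_{Y_I(\epsilon_m)}(\sF^{\rig},\CE_\lambda))$ claimed in the Proposition.

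The step I expect to be the main obstacle is the adic--rigid comparison $H^q_{Z_I^{P_I^m}}(\sF^{\ad},\CE_\lambda)\cong H^q_{Y_I(\epsilon_m)}(\sF^{\rig},\CE_\lambda)$: the closed pseudo-adic set $Z_I^{P_I^m}$ is strictly smaller than the open tube $Y_I(\epsilon_m)$, so this is not a formal excision statement but the technical core of the argument, modelled on the analogous computation for Drinfeld's upper half space in \cite[Section 1.3]{O2}, and it is precisely the point at which Assumption \ref{hypo1} enters.
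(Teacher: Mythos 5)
Your global structure — $\BZ_I=\varinjlim_m\BZ_{c_m}$, $\Ext(\varinjlim,\CE_\lambda)=\varprojlim\Ext(-,\CE_\lambda)$ up to a $\varprojlim^1$-term that you kill by topological Mittag--Leffler — is the same as the paper's (the paper just derives the Milnor sequence via the two hypercohomology spectral sequences for $\varprojlim\circ H^\bullet$). The genuine problem is the step you yourself flag as the ``main obstacle'': you commit to a \emph{term-by-term} isomorphism
$$\Ext^q(\iota_*(\BZ_{c_m}),\CE_\lambda)\;\cong\;\Ind_{P_I^m}^{G_0}\bigl(H^q_{Y_I(\epsilon_m)}(\sF^{\rig},\CE_\lambda)\bigr)$$
for each fixed $m$, i.e.\ $H^q_{Z_I^{P_I^m}}(\sF^{\ad},\CE_\lambda)\cong H^q_{Y_I(\epsilon_m)}(\sF^{\rig},\CE_\lambda)$. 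This is not what the paper proves, and there is no reason for it to hold: $Z_I^{P_I^m}=P_I^m\cdot Y_I^{\ad}$ is a union of group-translates of a fixed adic closed subset, whereas $Y_I(\epsilon_m)$ is a genuine tube that contains analytic deformations of $Y_I$ in directions transverse to the $G$-action; these are different closed supports, and their local cohomology groups differ at each finite level. Your appeal to ``$P_I^m$-invariant hulls of the tubes forming a cofinal system of neighbourhoods of $Z_I^{P_I^m}$'' would at best compare $H^q_{Z_I^{P_I^m}}$ with $\varprojlim_\epsilon H^q_{P_I^m\cdot Y_I^-(\epsilon)}$, and the sets $P_I^m\cdot Y_I^-(\epsilon)$ are not the $Y_I^-(\epsilon')$ that Corollary~\ref{limitloc} packages into $H^q_{Y_I(\epsilon_m)}$.

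The paper sidesteps this by never leaving the level of the injective resolution $\CI^\bullet$ before taking the limit over $m$. One writes $\Ext^i(\iota_*(\BZ_I),\CE_\lambda)=H^i\bigl(\varprojlim_m\bigoplus_g H^0_{Z_I^{gP_I^m}}(\sF^{\ad},\CI^\bullet)\bigr)$, and then, \emph{at the level of $H^0$ of each injective sheaf $\CI^p$}, uses the proof of \cite[Section 2, Proposition 4]{SS}: both decreasing systems $\{P_I^m\cdot Y_I^{\rig}\}_m$ and $\{Y_I(\epsilon_m)\}_m$ are interleaved and shrink to $Y_I^{\rig}$, so the inverse limits of the associated $\Gamma_Z(\cdot,\CI^p)$ agree. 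Only after this identification of the two \emph{inverse limits of complexes} does one commute $H^i$ past $\varprojlim$ (first spectral sequence plus \cite[Lemma 2.2.2]{O2}), and then the $\varprojlim^{(1)}$ on cohomology vanishes by the topological Mittag--Leffler argument driven by Assumption~\ref{hypo1}. In short: replace your per-$m$ comparison by the interleaving argument at the level of $\Gamma_Z(\sF^{\rig},\CI^p)$ for injective $\CI^p$, and the rest of your proof goes through as written.
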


\begin{proof} This is essentially the proof of \cite[Proposition 2.2.1]{O2}, where the Drinfeld case is treated. 
    The family $$\{gP^m_I \mid g \in G_0,\, m \in \BN \}$$ 
    of compact open subsets in $G_0/P_I$ yields $$\BZ_I = \varinjlim_{m\in \BN} \bigoplus_{g \in G_0/P_I^m}\BZ_{Z^{gP^m_I}}.$$ 
    Then, by choosing an injective resolution $\CI^\bullet$ of $\CE_\lambda$, we have 
    \begin{align*}
        \Ext^i( \iota_*(\BZ_I), \CE_\lambda)&=H^i\big(\Hom(\iota_*(\BZ_I),\CI^\bullet)\big)= H^i\big(\Hom(\iota_*(\varinjlim_{m\in \BN} \bigoplus_{g \in G_0/P_I^m}\BZ_{Z_I^{gP^m_I}}),\CI^\bullet)\big) \\
                                &=H^i\big(\varprojlim_{m\in \BN} \bigoplus_{g \in G_0/P_I^m} \Hom(\iota_*(\BZ_{Z_I^{gP^m_I}}), \CI^\bullet)\big)\\&=H^i\big(\varprojlim_{m\in \BN}\bigoplus_{g \in G_0/P_I^m} H^0_{Z_I^{gP^m_I}}(\sF^{\ad}, \CI^\bullet)\big).
    \end{align*}
    We set $Z_{I,m}:=P^m_I\cdot Y_I^{\rig} \subset \sF^{\rig}$ for $m \in \BN$. Then, we have chains of open admissible subsets 
    $$\ldots \sF^{\rig} \backslash Z_{I,m} \subset \sF^{\rig} \backslash Z_{I,{m+1}} \subset \ldots $$
    and  
    $$ \ldots \sF^{\rig} \backslash  Y_I(\epsilon_m) \subset \sF^{\rig} \backslash Y_I(\epsilon_{m+1}) \subset  \ldots $$ 
    which each cover $\sF^{\rig}\backslash Y_I^{\rig}$. Then, we know from the proof of \cite[Section 2, Proposition 4]{SS} that 
    $$ \varprojlim_{m\in \BN} H^0_{Z_{I,m}}(\sF^{\rig}, \CI^p)= H^0_{Y_I^{\rig}}(\sF^{\rig}, \CI^p)= \varprojlim_{m\in \BN} H^0_{Y_I(\epsilon_m)}(\sF^{\rig}, \CI^p).$$  The same holds for translates of  $Z_{I,m}$ and $Y_I(\epsilon_m)$. Hence, we get (cf. \cite[p. 1415]{O6}) 
    $$  \varprojlim_{m\in \BN} \bigoplus_{g \in G_0/P_I^m} H^0_{gZ_{I,m}}(\sF^{\rig}, \CI^p) \cong  \varprojlim_{m\in \BN} \bigoplus_{g \in G_0/P_I^m} H^0_{gY_I(\epsilon_m)}(\sF^{\rig}, \CI^p) $$ 
    for all injective sheafs of the resolution $\CI^\bullet$. Therefore, using that $\sF^{\rig}$ and $\sF^{\ad}$ have equivalent topoi (cf. \cite[Proposition 2.1.4]{Hu}), we get by functoriality an isomorphism of complexes 
    $$ \varprojlim_{m\in \BN}\bigoplus_{g \in G_0/P_I^m} H^0_{Z_I^{gP^m_I}}(\sF^{\ad}, \CI^\bullet) \cong \varprojlim_{m\in \BN}\bigoplus_{g \in G_0/P_I^m} H^0_{gY_I(\epsilon_m)}(\sF^{\rig}, \CI^\bullet).$$ 
    This implies 
    $$ \Ext^i( \iota_*(\BZ_I), \CE_\lambda)= H^i\big(\varprojlim_{m\in \BN}\bigoplus_{g \in G_0/P_I^m} H^0_{gY_I(\epsilon_m)}(\sF^{\rig}, \CI^\bullet)\big).$$

Before we can continue, we need a technical lemma where $\varprojlim_{m \in \BN}^{(r)}$ denote the $r$-th right derived functor of ${\varprojlim_{m \in \BN}}$.
\begin{lemma} Let $\CI$ be an injective sheaf on $\sF^\ad$. Then,
    $$ {\varprojlim_{m \in \BN}}^{(r)}\big(\bigoplus_{g \in G_0/P_I^m} H^0_{gY_I(\epsilon_m)}(\sF^{\rig}, \CI)\big)=0 \text{ for all } r \geq 1.$$
\end{lemma}
\begin{proof}
    It is sufficient to reproduce the proof of  \cite[Lemma 2.2.2]{O2}.
\end{proof}

Then, with the two standard hypercohomology spectral sequences 
\begin{align*}
    E_1^{pq}&={\varprojlim_{m \in \BN}}^{(q)}\big(\bigoplus_{g \in G_0/P_I^m} H^0_{gY_I(\epsilon_m)}(\sF^{\rig}, \CI^p)\big)\Rightarrow \bH^{p+q}{\varprojlim_{m \in \BN}} \big(\bigoplus_{g \in G_0/P_I^m} H^0_{gY_I(\epsilon_m)}(\sF^{\rig}, \CI^\bullet)\big), \\
    E_2^{pq}&={\varprojlim_{m \in \BN}}^{(p)}H^q\big(\bigoplus_{g \in G_0/P_I^m} H^0_{gY_I(\epsilon_m)}(\sF^{\rig}, \CI^\bullet)\big)\Rightarrow \bH^{p+q}{\varprojlim_{m \in \BN}} \big(\bigoplus_{g \in G_0/P_I^m} H^0_{gY_I(\epsilon_m)}(\sF^{\rig}, \CI^\bullet)\big), 
    \end{align*}
and by knowing that $\varprojlim_{m \in \BN}^{(p)}=0$ for $p\geq 2$ (cf. \cite{Je}), we get the following short exact sequence (cf. \cite[Section 2, Proof of Proposition 4]{SS})
\begin{align*} 0 \rightarrow {\varprojlim_{m \in \BN}}^{(1)}\bigoplus_{g \in G_0/P_I^m} H^{i-1}_{gY_I(\epsilon_m)}(\sF^{\rig}, \CE_\lambda) &\rightarrow \Ext^i( i_*(\BZ_I), \CE_\lambda) \\ &\rightarrow {\varprojlim_{m \in \BN}}\bigoplus_{g \in G_0/P_I^m} H^{i}_{gY_I(\epsilon_m)}(\sF^{\rig}, \CE_\lambda) \rightarrow 0
\end{align*}
for all $i \in \BN$. Moreover, Assumption \ref{hypo1} and Corollary \ref{limitloc}, respectively, imply that the projective system of $K$-Fréchet spaces $\big(\bigoplus_{g \in G_0/P_I^m} H^{i}_{gY_I(\epsilon_m)}(\sF^{\rig}, \CE_\lambda)\big)_{m \in \BN}$ satisfies the topological Mittag-Leffler property for all $i\geq 0$ (cf. \cite[p. 626]{O2}). Therefore, by \cite[Remark 13.2.4]{EGAIII}, the ${\varprojlim_{m \in \BN}}^{(1)}$-term vanishes, i.e. 

$$ \Ext^i( \iota_*(\BZ_I), \CE_\lambda) \cong {\varprojlim_{m \in \BN}}\bigoplus_{g \in G_0/P_I^m} H^{i}_{gY_I(\epsilon_m)}(\sF^{\rig}, \CE_\lambda).$$
The statement of the proposition is then just rewriting the latter term. 
\end{proof}

\begin{proposition}
    We have a spectral sequence 
    \begin{equation*}\label{spectral2}
        {E}_1^{-p,q}=\bigoplus\limits_{\substack{I \subset \Delta \\ \lb \Delta \backslash I \rb = p }}\varprojlim_{m \in \BN} \Ind_{P_I^m}^{G_0} \big(H^q_{Y_I(\epsilon_m)}(\sF^{\rig},\CE_\lambda)\big)\Rightarrow H^{-p+q}(\sF^{\wa},\CE_\lambda)
    \end{equation*}
    where we use the abbreviation $Y_\Delta$ for $\sF$.
\end{proposition}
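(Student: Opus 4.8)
The plan is to run the same argument that produced the spectral sequence (\ref{spectral}), but with the closed subspace $Y$ replaced by the open subspace $\sF^{\wa}$. Let $j\colon\sF^{\wa}\hookrightarrow\sF^{\ad}$ be the open immersion complementary to $\iota\colon Y\hookrightarrow\sF^{\ad}$, and recall the short exact sequence of abelian sheaves $0\to j_{!}\BZ\to\BZ_{\sF^{\ad}}\overset{\rho}{\longrightarrow}\iota_{*}\BZ_{Y}\to 0$ on $\sF^{\ad}$, where $\BZ$ denotes the constant sheaf on the space in question. Since $j_{!}$ is exact and $j^{-1}$ preserves injectives, the adjunction $(j_{!},j^{-1})$ yields $R\Hom_{\sF^{\ad}}(j_{!}\BZ,\CE_{\lambda})\cong R\Gamma(\sF^{\wa},\CE_{\lambda})$, hence $\Ext^{i}_{\sF^{\ad}}(j_{!}\BZ,\CE_{\lambda})\cong H^{i}(\sF^{\wa},\CE_{\lambda})$; this is the exact analogue of the isomorphism $\Ext^{i}(\iota_{*}\BZ_{Y},\CE_{\lambda})\cong H^{i}_{Y}(\sF^{\ad},\CE_{\lambda})$ used to set up (\ref{spectral}). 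It therefore suffices to choose a convenient resolution of $j_{!}\BZ$ on $\sF^{\ad}$ and to read off the resulting hyper-$\Ext$ spectral sequence.

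I would obtain such a resolution by prepending the term $\BZ_{\sF^{\ad}}$ to the fundamental complex. Pushing (\ref{fundamentalcomplex}) forward along the exact functor $\iota_{*}$ and using that it is acyclic (\cite[Theorem 6.9]{CDHN}), the complex $\bigl[\bigoplus_{|\Delta\setminus I|=1}\iota_{*}\BZ_{I}\to\cdots\to\iota_{*}\BZ_{\emptyset}\bigr]$ is a resolution of $\iota_{*}\BZ_{Y}$ via the first arrow $\varepsilon$ of the complex. Splicing this resolution into the two-term complex $\bigl[\BZ_{\sF^{\ad}}\overset{\rho}{\longrightarrow}\iota_{*}\BZ_{Y}\bigr]$ — which is quasi-isomorphic to $j_{!}\BZ$ because $\rho$ is surjective with kernel $j_{!}\BZ$ — produces a bounded resolution $\widetilde{B}^{\bullet}$ of $j_{!}\BZ$, concentrated in degrees $0,\dots,|\Delta|$, with $\widetilde{B}^{0}=\BZ_{\sF^{\ad}}$, with $\widetilde{B}^{p}=\bigoplus_{I\subsetneq\Delta,\,|\Delta\setminus I|=p}\iota_{*}\BZ_{I}$ for $1\le p\le|\Delta|$, and with differentials inherited from $\rho$ and from the differentials of the fundamental complex. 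Computing $R\Hom_{\sF^{\ad}}(-,\CE_{\lambda})$ through $\widetilde{B}^{\bullet}$ then gives the first hyper-$\Ext$ spectral sequence
$$E_{1}^{-p,q}=\Ext^{q}_{\sF^{\ad}}(\widetilde{B}^{p},\CE_{\lambda})\ \Longrightarrow\ \Ext^{-p+q}_{\sF^{\ad}}(\widetilde{B}^{\bullet},\CE_{\lambda})=H^{-p+q}(\sF^{\wa},\CE_{\lambda}),$$
which converges since $\widetilde{B}^{\bullet}$ is bounded.

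It remains to identify the $E_{1}$-page. For $1\le p\le|\Delta|$ and $I\subsetneq\Delta$ with $|\Delta\setminus I|=p$, Proposition \ref{extgroups} gives $\Ext^{q}_{\sF^{\ad}}(\iota_{*}\BZ_{I},\CE_{\lambda})\cong\varprojlim_{m\in\BN}\Ind_{P_{I}^{m}}^{G_{0}}\bigl(H^{q}_{Y_{I}(\epsilon_{m})}(\sF^{\rig},\CE_{\lambda})\bigr)$. For $p=0$ one has $\Ext^{q}_{\sF^{\ad}}(\BZ_{\sF^{\ad}},\CE_{\lambda})=H^{q}(\sF^{\ad},\CE_{\lambda})$, and since $\sF$ is proper over $K$, rigid-analytic GAGA together with the equivalence of topoi $\sF^{\rig}\simeq\sF^{\ad}$ (\cite[Proposition 2.1.4]{Hu}) identifies this with $H^{q}(\sF^{\rig},\CE_{\lambda})$; under the conventions $\bP_{\Delta}=\bG$, $P^{m}_{\Delta}=G_{0}$, $Y_{\Delta}=\sF$ and $Y_{\Delta}(\epsilon_{m})=\sF^{\rig}$ — so that $H^{q}_{Y_{\Delta}(\epsilon_{m})}(\sF^{\rig},\CE_{\lambda})=H^{q}(\sF^{\rig},\CE_{\lambda})$ while $\Ind_{G_{0}}^{G_{0}}$ and $\varprojlim_{m}$ act trivially — this equals $\varprojlim_{m\in\BN}\Ind_{P_{\Delta}^{m}}^{G_{0}}\bigl(H^{q}_{Y_{\Delta}(\epsilon_{m})}(\sF^{\rig},\CE_{\lambda})\bigr)$. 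Thus the $p=0$ column fits the uniform description with $I=\Delta$, and the claimed spectral sequence follows.

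I expect the only delicate point to be the construction of $\widetilde{B}^{\bullet}$: one has to check that the morphism $j_{!}\BZ\to\widetilde{B}^{\bullet}$, assembled from $\mathrm{id}_{\BZ_{\sF^{\ad}}}$ and the quasi-isomorphism $\varepsilon\colon\iota_{*}\BZ_{Y}\to\bigl[\bigoplus_{|\Delta\setminus I|=1}\iota_{*}\BZ_{I}\to\cdots\bigr]$, is itself a quasi-isomorphism, so that the new column $p=0$ carrying $H^{*}(\sF^{\ad},\CE_{\lambda})$ appears in exactly the right homological degree. This is a routine mapping-cone argument, but it is where the index bookkeeping has to be done carefully; everything else is the formal machinery of hypercohomology spectral sequences together with the already-established Proposition \ref{extgroups}.
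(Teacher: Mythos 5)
Your proof is correct and reaches the same spectral sequence, but by a genuinely different route. The paper starts from the already-established spectral sequence $\hat{E}_1^{-p,q}\Rightarrow H^{-p+q}_Y(\sF^\ad,\CE_\lambda)$, manufactures an auxiliary one-column double complex $\tilde{E}_1^{\bullet,\bullet}$ carrying $H^q(\sF^\rig,\CE_\lambda)$, constructs by hand a morphism $f_1^{\bullet,\bullet}:\hat{E}_1^{\bullet,\bullet}\to\tilde{E}_1^{\bullet,\bullet}$ out of the restriction maps $H^*_{gY_I(\epsilon_m)}(\sF^\rig,\CE_\lambda)\to H^*(\sF^\rig,\CE_\lambda)$, and identifies the cohomology of $\mathrm{Cone}(\Tot(f_1^{\bullet,\bullet}))$ with $H^*(\sF^{\wa},\CE_\lambda)$ via the long exact sequence of local cohomology. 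You instead work one level upstream, on $\mathbf{Ab}(\sF^\ad)$: splicing the excision sequence $0\to j_!\BZ\to\BZ_{\sF^\ad}\to\iota_*\BZ_Y\to 0$ with $\iota_*$ of the fundamental complex produces a bounded resolution $\widetilde{B}^\bullet$ of $j_!\BZ$, and the hyper-$\Ext$ spectral sequence of this resolution is the desired one, using $\Ext^q(j_!\BZ,\CE_\lambda)\cong H^q(\sF^\wa,\CE_\lambda)$ via the adjunction $(j_!,j^{-1})$. The two routes are formally equivalent --- the paper's cone is exactly the total complex of $\RHom(\widetilde{B}^\bullet,\CI^\bullet)$ --- but yours is cleaner in one concrete respect: in the paper's argument one must verify that the boundary maps of the mapping cone agree with those of the local cohomology exact sequence (the paper dispatches this with ``by the definitions''), whereas in your construction this compatibility is automatic because the whole long exact sequence is produced by a single short exact sequence of sheaves applied to one injective resolution of $\CE_\lambda$. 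Your remarks about the $p=0$ column are right, and the indexing check you flag at the end is exactly as routine as you say (injectivity of the augmentation $\BZ\to\bigoplus_{|\Delta\setminus I|=1}\BZ_I$ gives $H^0(\widetilde{B}^\bullet)=\Ker\rho=j_!\BZ$, surjectivity of $\rho$ kills $H^1$, and acyclicity of the fundamental complex kills the rest). One tiny point: rigid GAGA is not needed for the $p=0$ column; the equivalence of topoi $\sF^\rig\simeq\sF^\ad$ from \cite[Proposition 2.1.4]{Hu} already gives $H^q(\sF^\ad,\CE_\lambda)\cong H^q(\sF^\rig,\CE_\lambda)$ for the coherent sheaf $\CE_\lambda$.
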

\begin{proof}
We follow the arguments used in the proof of \cite[Proposition 4.2]{O3}. First, we consider the second quadrant double complex $(\tilde{E}_1^{\bullet, \bullet},d^{\bullet, \bullet}, d'^{\bullet, \bullet})$ defined by 
$$ \tilde{E}_1^{p, q}= \begin{cases}  H^q(\sF^{\rig},\CE_\lambda) &\text{if } p=0 \\
    0 &\text{else, }   \end{cases}$$ 
with all differentials being trivial. Hence, it defines a spectral sequence converging to $H^*(\sF^{\rig},\CE_\lambda)$. Further, let $I \subset \Delta$ such that $\lb\Delta \backslash I\rb=1$ and $m \in \BN$. The inclusion $gY_I(\epsilon_m) \subset \sF^{\rig}$ induces a morphism (cf. \cite[Lemma 1.3]{vP}) 
$$ H^*_{gY_I(\epsilon_m)}(\sF^{\rig}, \CE_\lambda) \rightarrow H^*(\sF^{\rig}, \CE_\lambda)$$
for $g \in G/P^m_I$. Then, by the universal property of the direct sum we get a morphism 
$$ \bigoplus_{g \in G_0/P_I^m} H^{*}_{gY_I(\epsilon_m)^{\rig}}(\sF^{\rig}, \CE_\lambda) \rightarrow H^*(\sF^{\rig}, \CE_\lambda).$$
Thus, the functoriality of  ${\varprojlim_{m \in \BN}}$  yields 
$$  D_I^*:{\varprojlim_{m \in \BN}} \bigoplus_{g \in G_0/P_I^m} H^{*}_{gY_I(\epsilon_m)^{\rig}}(\sF^{\rig}, \CE_\lambda)\rightarrow  {\varprojlim_{m \in \BN}}  H^*(\sF^{\rig}, \CE_\lambda)=H^*(\sF^{\rig}, \CE_\lambda).$$ 
Then, we consider the morphism of double complexes (cf. (\ref{spectral}))
$$ f_1^{\bullet,\bullet}: \hat{E}_1^{{\bullet,\bullet}} \rightarrow \tilde{E}_1^{\bullet, \bullet}$$ 
given by 
$$ f_1^{p,q}=\begin{cases} \oplus_I D_I^q & \text{if }p=0 \\
   0 & \text{else.} \end{cases}$$ 
It induces the morphism of total complexes 
$$ \Tot(f_1^{\bullet,\bullet}): \Tot(E_1^{{\bullet,\bullet}}) \rightarrow \Tot(\tilde{E}_1^{\bullet, \bullet}) $$
where we denote the mapping cone of $\Tot(f_1^{\bullet,\bullet})$ by $\mathrm{Cone}(\Tot(f_1^{\bullet,\bullet})) ^\bullet$. By the definitions, the triangle for this mapping cone induces a long exact sequence which identifies with 
$$ \ldots \rightarrow H^q_{Y}(\sF^{\rig},\CE_\lambda) \rightarrow 
H^q(\sF^{\rig},\CE_\lambda) \rightarrow H^q(\sF^{\wa},\CE_\lambda)\rightarrow \ldots. $$ 
Hence, the cohomology of $\mathrm{Cone}(\Tot(f_1^{\bullet,\bullet})) ^\bullet$ coincides with $H^*(\sF^{\wa},\CE_\lambda)$. Furthermore, the total complex of the double complex ${E}_1^{\bullet,\bullet}$ in the statement is exactly $\mathrm{Cone}(\Tot(f_1^{\bullet,\bullet})) ^\bullet$ which finishes the proof. 
\end{proof}

Before stating and proving the main theorem we need the following lemma.

\begin{lemma}\label{help} Let $I \subsetneq \Delta$ and $w \in W^I \cap \Omega_I$. Then, $w \in \Omega_\emptyset$. 
\end{lemma}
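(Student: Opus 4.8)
The plan is to rephrase everything through the pairing $\langle\,,\,\rangle_\der$ with the fundamental weights, and then to combine the Kostant-representative property of $w$ with the positivity of the inverse Cartan matrix.

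By Lemma~\ref{n_a} we have $\mu=\sum_{\alpha\in\Delta}n_\alpha\alpha^\vee$ with $n_\alpha>0$, so $\mu\in X_*(\bT_\der)_\BQ$; since $W$ permutes the coroots it preserves $X_*(\bT_\der)_\BQ$, hence $w\mu=\sum_{\gamma\in\Delta}m_\gamma\gamma^\vee$ for uniquely determined $m_\gamma\in\BQ$, and $m_\gamma=\langle\check\varpi_\gamma,w\mu\rangle_\der$ because $\{\check\varpi_\gamma\}$ is dual to the basis $\{\gamma^\vee\}$. By Lemma~\ref{fweightandpairing} (applied also with the empty set), $w\in\Omega_\emptyset$ is equivalent to $m_\gamma>0$ for all $\gamma\in\Delta$, while the hypothesis $w\in\Omega_I$ already gives $m_\gamma>0$ for all $\gamma\in\Delta\setminus I$. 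So the only thing left to prove is that $m_\alpha>0$ for $\alpha\in I$.

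To access the indices $\alpha\in I$, I would use $w\in W^I$. For $\beta\in I$, Lemma~\ref{Kostant} gives $w^{-1}\beta\in\Phi^+$; and $\mu$ is strictly dominant, since $\langle\alpha,\mu\rangle>0$ for all $\alpha\in\Delta$ by (\ref{positiveChamber}), and therefore $\langle\delta,\mu\rangle_\der>0$ for every $\delta\in\Phi^+$ (such a $\delta$ being a nonzero non-negative combination of simple roots). Hence $\langle\beta,w\mu\rangle_\der=\langle w^{-1}\beta,\mu\rangle_\der>0$ for all $\beta\in I$. On the other hand $\langle\beta,w\mu\rangle_\der=\sum_{\gamma\in\Delta}m_\gamma\langle\beta,\gamma^\vee\rangle_\der$, and in the part of this sum with $\gamma\in\Delta\setminus I$ we have $\langle\beta,\gamma^\vee\rangle_\der\le 0$ (Lemma~\ref{rootrel}, as $\beta\ne\gamma$) while $m_\gamma>0$, so that part is $\le 0$; we conclude $\sum_{\gamma\in I}m_\gamma\langle\beta,\gamma^\vee\rangle_\der\ge\langle\beta,w\mu\rangle_\der>0$ for every $\beta\in I$.

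Finally, these inequalities say precisely that the vector $(m_\gamma)_{\gamma\in I}$, viewed in the basis $\{\gamma^\vee\}_{\gamma\in I}$ of the span of the coroots indexed by $I$, is strictly dominant for the root subsystem $\Phi_I$ with simple roots $I$. Writing $\Phi_I$ as the disjoint union of its irreducible components makes its Cartan matrix $C_I$ block diagonal, so $C_I^{-1}$ is block diagonal with each diagonal block having only positive entries by Lemma~\ref{inversecartan}; since $(m_\gamma)_{\gamma\in I}$ is recovered from the vector $\big(\sum_{\gamma\in I}m_\gamma\langle\beta,\gamma^\vee\rangle_\der\big)_{\beta\in I}$ of positive numbers by applying the transpose of the inverse Cartan matrix of $\Phi_I$, it follows that $m_\alpha>0$ for all $\alpha\in I$. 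Combined with the previous paragraph this gives $m_\gamma>0$ for all $\gamma\in\Delta$, i.e.\ $w\in\Omega_\emptyset$. The point that needs care — and the only place where the argument is not purely formal — is that both hypotheses are genuinely used: $w\in\Omega_I$ controls the coordinates outside $I$, and the Kostant property $w\in W^I$, together with the sign of the off-diagonal Cartan entries, forces the coordinates inside $I$ to be positive (without $W^I$, a simple reflection $s_\alpha$ with $\alpha\in I$ is already a counterexample). One also has to keep the bookkeeping between $\langle\,,\,\rangle$ and $\langle\,,\,\rangle_\der$ and the transpose conventions straight; the appeal to Lemma~\ref{inversecartan} and the reduction to $\Phi_I$ are then routine.
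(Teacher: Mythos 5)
Your proof is correct and follows essentially the same route as the paper's: reduce to showing $m_\alpha>0$ for $\alpha\in I$ via Lemma~\ref{fweightandpairing}, use Lemma~\ref{Kostant} together with strict dominance of $\mu$ to get $\langle\alpha,w\mu\rangle_\der>0$ for $\alpha\in I$, control the cross terms with $\Delta\setminus I$ via Lemma~\ref{rootrel} and the $\Omega_I$ hypothesis, and finally invert the (block-diagonal) Cartan matrix of $\Phi_I$ using Lemma~\ref{inversecartan}. The only cosmetic difference is that the paper packages the inequality as an explicit linear system $Cx=b$ with $b$ having positive entries, whereas you state the same inequalities directly.
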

\begin{proof}
    By Lemma \ref{n_a}, we know that $$\mu=\sum_{\alpha \in \Delta} n_\alpha \alpha^{\vee}$$ for $n_\alpha \in \BQ_{>0}$. 
    As $W$ acts by permutation on $\Phi$, we have  that $$w\mu=\sum_{\alpha \in \Delta} m_\alpha \alpha^{\vee}$$ with $m_\alpha \in \BQ$ for $w \in W^I \cap \Omega_I$.
    Then, by Lemma \ref{fweightandpairing} it is enough to show that $ \langle \check{\varpi}_{\alpha}, w\mu  \rangle_\der = m_\alpha>0$ for all $\alpha \in I$. By (\ref{linearcombi}), we have 
    $$ \alpha = \sum_{\beta \in \Delta} \langle \alpha, \beta^\vee{}\rangle_\der \check{\varpi}_\beta$$
    for $\alpha \in \Delta$. Moreover, from Lemma \ref{Kostant} we know that $w^{-1}\alpha \in \Phi^+ $ for all $\alpha \in I$. Since we assumed $\mu$ to lie in the positive Weyl chamber (cf. (\ref{positiveChamber})), we have 
    $$\langle \alpha, w\mu \rangle= \langle w^{-1}\alpha, \mu \rangle> 0$$ for $\alpha \in I$. By the very definition of $\langle \text{ , } \rangle_\der$ it follows that 
    $$ \langle \alpha, w\mu \rangle_\der > 0$$
    for all $\alpha \in I$. 
    Furthermore, for $\alpha, \beta \in \Delta$ and $\alpha \neq \beta$, we know by Lemma \ref{rootrel} that $\langle \alpha, \beta^\vee \rangle_\der \leq 0$. Recall that $w \in W^I \cap \Omega_I$ implies that $\langle \check{\varpi}_\beta  , w\mu \rangle_\der>0$ for $\beta \in \Delta \backslash I$ by Lemma \ref{fweightandpairing}. 
    Thus, for $\alpha \in I$ and $w \in W^I \cap \Omega_I$, we have that  
    \begin{equation}\label{b}
        b_\alpha:=\sum_{\beta \in I} \langle \alpha,  \beta^{\vee} \rangle_\der \langle \check{\varpi}_\beta, w\mu  \rangle_\der = \langle \alpha , w\mu \rangle_\der - \sum_{\beta \in \Delta \backslash I} \langle \alpha, \beta^{\vee} \rangle_\der \langle \check{\varpi}_\beta  , w\mu \rangle_\der > 0.
     \end{equation}
    We fix an ordering on $I=\{\alpha_1 > \alpha_2 > \ldots > \alpha_r \}$ and define 
    \begin{align*}
    &C \in \BQ^{\lb I \rb \times \lb I \rb } \text{ with } C_{ij}:=\langle \alpha_i, \alpha_{j}^\vee  \rangle_\der,  \\ 
    &x:=(m_{\alpha_i})_{i \in \{1, \ldots, r\}} \in \BQ^{\lb I \rb}, \\
     &b:=(b_{\alpha_i})_{i \in \{1, \ldots, r\}} \in \BQ^{\lb I \rb}.
    \end{align*}
    Then, 
    \begin{equation}\label{LGS}
    Cx=b. 
    \end{equation}
    After reordering the simple roots, if necessary, we can assume that $C$ has blocks $C_1,\ldots, C_t$ on the main diagonal and has zeroes everywhere else.
    Then, the $C_i$'s are the (transposed) Cartan matrices (cf. (\ref{cartanmatrix})) of the irreducible components of the Dynkin diagram of $\Phi_I$. Thus, $C^{-1}$ has blocks $C_i^{-1}$ on the main diagonal
    and has zeroes everywhere else. The entries of the $C_i^{-1}$ are, by Lemma \ref{inversecartan}, known to be positive rational. Then, (\ref{b}) and (\ref{LGS}) imply immediately that $m_\alpha>0$ for all $\alpha \in I$.
\end{proof}

    




\begin{theorem}\label{theorem1}
    Let $i_0:=\dim\sF-\lb\Delta\rb$. The homology of the (chain) complex 
    $$C_\bullet:  \bigoplus_{\substack{w \in \Omega_\emptyset \\ l(w)=\dim Y_\emptyset}}V^G_B(w) \leftarrow \ldots \leftarrow \bigoplus_{\substack{w \in \Omega_\emptyset \\ \ l(w)=1}} V_B^G(w) \leftarrow  V^G_B(\lambda) $$
    starting in degree $i_0$ coincides with $H^*(\sF^{\wa},\CE_\lambda)'$, i.e. $H_i(C_\bullet)=H^i(\sF^{\wa},\CE_\lambda)'$.
\end{theorem}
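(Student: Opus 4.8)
The plan is to dualise the spectral sequence of the previous Proposition and to reorganise its $E_1$-page into the double complex $D_{\bullet,\bullet}$ announced in the introduction, whose two standard spectral sequences recover, on one side, that dual and, on the other side, the complex $C_\bullet$. All the $K$-vector spaces occurring in the spectral sequence $(E_r,d_r)_{r\geq1}$ of the previous Proposition are $K$-Fréchet spaces — the terms being the $\varprojlim_m\Ind_{P_I^m}^{G_0}(H^q_{Y_I(\epsilon_m)}(\sF^{\rig},\CE_\lambda))$ together with the finite-dimensional $H^q(\sF,\CE_\lambda)$ for $I=\Delta$ — and its differentials are continuous; since the strong dual is exact on $K$-Fréchet spaces (cf.\ \cite[Section I, Corollary 1.4]{BS}), applying $(-)'$ produces a homological spectral sequence $(E'_r,d'_r)_{r\geq1}$ converging to $H^*(\sF^{\wa},\CE_\lambda)'$ whose first page is, by Lemma \ref{Bi-Functor}, the bigraded object $\bigoplus_{I\subseteq\Delta,\ \lb\Delta\backslash I\rb=p}\CF^G_{P_I}(H^q_{Y_I}(\sF,\CE_\lambda))$. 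Here the corner $I=\Delta$ contributes $\CF^G_G(H^q(\sF,\CE_\lambda))$, which by Lemma \ref{C_I(w)} applied with $w=e$ (so that $C_\Delta(e)=\sF$) vanishes for $q\neq n$ and equals $\CF^G_G(L(\lambda))=I^G_G(e)$ for $q=n$.

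For each $I\subseteq\Delta$ one then writes $\CF^G_{P_I}(H^q_{Y_I}(\sF,\CE_\lambda))$ as a homology group: by Lemma \ref{cohY_I} and Lemma \ref{C_I(w)} the cochain complex $C_I^\bullet$ with terms $\bigoplus_{w\in W^I\cap\Omega_I,\ l(w)=n-q}M_I(w\cdot\lambda)$ and the Lepowsky BGG differentials (Remark \ref{morphismBGG}) computes $H^*_{Y_I}(\sF,\CE_\lambda)$, and applying the exact contravariant functor $\CF^G_{P_I}$ (Proposition \ref{exact}), with $\CF^G_{P_I}(M_I(w\cdot\lambda))=I^G_{P_I}(w)$ by (\ref{twisted}), one gets a complex with terms $\bigoplus_{w\in W^I\cap\Omega_I,\ l(w)=n-q}I^G_{P_I}(w)$ whose homology is $\CF^G_{P_I}(H^q_{Y_I}(\sF,\CE_\lambda))$. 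Resolving in this way each column of the $E'_1$-page yields $D_{\bullet,\bullet}$: its vertical differentials are the $\CF^G_{P_I}$-images of the BGG morphisms, and its horizontal differentials are the maps $p_{I',I}\colon I^G_{P_{I'}}(w)\to I^G_{P_I}(w)$ for $I\subset I'$, obtained by applying $\CF^G$ to the transition maps $q_{I,I'}\colon M_I(w\cdot\lambda)\to M_{I'}(w\cdot\lambda)$ of Example \ref{genVM}, which on the $E'_1$-level are precisely the differentials $d'_1$ induced by the fundamental complex (\ref{fundamentalcomplex}). By construction, one of the two standard spectral sequences of $D_{\bullet,\bullet}$ — take vertical homology first — is $(E'_r,d'_r)$, so that $H_*(\Tot D_{\bullet,\bullet})\cong H^*(\sF^{\wa},\CE_\lambda)'$.

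For the other spectral sequence I would first rewrite the index sets as $W^I\cap\Omega_I=\{\,w\in\Omega_\emptyset\mid I\subseteq I(w)\,\}$, using Lemma \ref{help}, Lemma \ref{Kostant} (via (\ref{maximalI})) and the inclusion $\Omega_\emptyset\subseteq\Omega_I$. Then, fixing $w\in\Omega_\emptyset$ and letting $I$ run over the subsets of $I(w)$, the corresponding horizontal complex $\bigoplus_{I\subseteq I(w)}I^G_{P_I}(w)$ is exactly the acyclic complex of Corollary \ref{relativeresolution} (with the parameter there set to $\emptyset$), so its homology is $V^G_B(w)$, concentrated in the column $I=\emptyset$. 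Hence taking horizontal homology first collapses $D_{\bullet,\bullet}$ onto a single column, with terms $\bigoplus_{w\in\Omega_\emptyset,\ l(w)=k}V^G_B(w)$ and with differentials induced by the BGG morphisms, i.e.\ the surjections $p_{w',w}$ of Lemma \ref{surjection}; after the bookkeeping of bidegrees this column is precisely $C_\bullet$, starting in degree $i_0=\dim\sF-\lb\Delta\rb$ (the location of the summand $w=e$, $I=\emptyset$). Being concentrated in one column, this spectral sequence degenerates, so $H_*(\Tot D_{\bullet,\bullet})\cong H_*(C_\bullet)$; combining this with the previous paragraph gives $H_i(C_\bullet)\cong H^i(\sF^{\wa},\CE_\lambda)'$.

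The main obstacle I expect is the bookkeeping needed to make this outline rigorous: verifying that the vertical (BGG) and horizontal (fundamental-complex) maps anticommute and assemble into a genuine double complex — which rests on the compatibility of the transition maps $q_{I,J}$ with the Lepowsky BGG differentials and on the matching sign conventions of (\ref{fundamentalcomplex}) and Corollary \ref{relativeresolution} — that the first spectral sequence of $D_{\bullet,\bullet}$ genuinely reproduces the strong dual of $(E_r,d_r)$ rather than merely sharing its $E_1$-page (which is cleanest to secure by resolving the columns already on the analytic side and only then dualising the total complex), and that the degree conventions are arranged so that the collapsed column is $C_\bullet$ in the asserted degrees; the substantive inputs, namely Lemma \ref{C_I(w)}, Corollary \ref{relativeresolution}, Lemma \ref{help} and Lemma \ref{Bi-Functor}, are already in hand.
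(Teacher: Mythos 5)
Your proposal is correct and follows essentially the same route as the paper: construct the second-quadrant double complex $D_{\bullet,\bullet}$ with entries $\bigoplus_{w\in W^I\cap\Omega_I,\;l(w)=n-q}I^G_{P_I}(w)$, identify one of its two standard spectral sequences with the strong dual of the spectral sequence from Proposition~\ref{spectral2} (using Lemma~\ref{cohY_I}, Lemma~\ref{Bi-Functor}, exactness of duality on $K$-Fr\'echet spaces), and show via Lemma~\ref{help} and Corollary~\ref{relativeresolution} that the other collapses to $C_\bullet$. Your closing remarks are well-placed: the anticommutativity of the two families of differentials is indeed the point the paper handles only by reference (``shown as in the proof of \cite[Theorem 4.2]{OSch}''), and the paper is slightly terser than you about the distinction between the full dual spectral sequence and a mere identification of $E_1$-pages, so your suggestion to resolve the columns before dualising the total complex is a reasonable way to tighten that step.
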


\begin{proof}

We consider the double complex $D_{\bullet, \bullet}$, similar to the one from \cite[p. 662]{OSch}, defined as  a second quadrant double chain complex, given by 
\begin{equation}\label{double}
D_{p,q}= \bigoplus_{{\substack{I \subset \Delta \\ \lb \Delta \backslash I \rb = -p}}} \bigoplus_{{\substack{w \in W^I \cap \Omega_I \\ l(w)=n-q}}} I_{P_I}^G(w) \, \Big(=\bigoplus_{{\substack{I \subset \Delta \\ \lb \Delta \backslash I \rb = -p}}} \bigoplus_{{\substack{w \in W^I \cap \Omega_I \\ l(w)=n-q}}}\CF^G_{P_I}\big(H^q_{C_I(w)}(\sF,\CE_\lambda)\big)\Big)
\end{equation}
(cf. (\ref{twisted}) for the objects). The vertical differentials are the ones coming from Lemma \ref{cohY_I}. The horizontal ones come from the transition maps $$H^q_{C_I(w)}(\sF,\CE_\lambda) \rightarrow H^q_{C_{I'}(w)}(\sF,\CE_\lambda)$$ for $I \subset I'$ and $w \in W^{I'}$ induced by the fact that $C_I(w) \subset C_{I'}(w)$ is a closed subset. They are the same as in Example \ref{genVM}. The commutativity is shown as in the proof of \cite[Theorem 4.2]{OSch}. \\
We are especially interested in the two spectral sequences converging towards the homology of the total complex $\Tot(D_{\bullet, \bullet})$ associated to $D_{\bullet, \bullet}$.  Namely,  
\begin{align*} {}^IE_{p,q}^0=D_{p,q} &\Rightarrow H_{p+q}(\Tot(D_{\bullet, \bullet})), \\
               {}^{II}E_{p,q}^0=D_{q,p} &\Rightarrow H_{p+q}(\Tot(D_{\bullet, \bullet})). 
\end{align*}
Then, by Lemma \ref{cohY_I} and Lemma \ref{Bi-Functor} in combination with the functoriality and the exactness of the functor $\CF_P^G$ (cf. \ref{exact}), 
we see that ${}^IE^1_{\bullet,\bullet}=(E_1^{\bullet,\bullet})'$ (cf. Proposition \ref{spectral2}). We know from Proposition \ref{extgroups} that the entries of $E_1^{\bullet,\bullet}$ are $K$-Fréchet spaces. Furthermore, the duality functor is exact on the category of $K$-Fréchet spaces (cf. \cite[Section I, Corollary 1.4]{BS}). Hence, $H_{p}(\Tot(D_{\bullet, \bullet}))\cong H^{p}(\sF^{\wa},\CE_\lambda)'.$ Due to Lemma \ref{help}, we have $${}^{II}E^0_{p,\bullet}=\bigoplus_{\substack{w \in \Omega_\emptyset \\ l(w)=n-p}} E^{0,w}_{p,\bullet}$$
with chain complexes
\begin{equation} \label{reso}
    E^{0,w}_{p,\bullet}:  I_{P_{I(w)}}^G(w) \rightarrow \bigoplus_{\substack{ I \subset I(w) \\ \lb I(w)\backslash I \rb =1 }} I_{P_{I}}^G(w)\rightarrow \ldots \rightarrow \bigoplus_{\substack{ I \subset I(w) \\ \lb I \rb =1 }} I_{P_{I}}^G(w) \rightarrow I_B^G(w)
\end{equation}
ending in degree $-\lb \Delta \rb$. From Corollary \ref{relativeresolution}, we know that these complexes are exact except at the very right position where the cokernel is $V_B^G(w)$. Thus, we get $${}^{II}E^1_{p,q}= \begin{cases} \bigoplus_{\substack{ w \in \Omega_\emptyset \\ l(w)=n-p }} V_{B}^G(w) &\text{ if } q=-\lb \Delta \rb, \\
0 &\text{ else.}   
\end{cases}$$
Therefore, ${}^{II}E^2={}^{II}E^\infty$ and we are done. 
\end{proof}
\begin{corollary}
    Let $i_0:=\dim\sF-\lb\Delta\rb$. Then, 
    $H^{i_0}(\sF^{\wa}, \CE_\lambda) \neq 0.$ 
\end{corollary}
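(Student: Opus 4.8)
The plan is to combine Theorem~\ref{theorem1} with the explicit Jordan--Hölder computation of Theorem~\ref{multiplicities}. By Theorem~\ref{theorem1} we have $H^{i_0}(\sF^{\wa},\CE_\lambda)'=H_{i_0}(C_\bullet)$. As $i_0$ is the extreme degree occupied by the complex $C_\bullet$ and the only term sitting in that degree is $V^G_B(\lambda)=V^G_B(e)$, this homology group is the kernel of the differential issuing from it,
\[
    d\colon V^G_B(\lambda)\longrightarrow \bigoplus_{\substack{\alpha\in\Delta\\ s_\alpha\in\Omega_\emptyset}} V^G_B(s_\alpha),
\]
whose components are, up to sign, the natural maps $V^G_B(e)\to V^G_B(s_\alpha)$. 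If no simple reflection $s_\alpha$ lies in $\Omega_\emptyset$, the target vanishes and $H^{i_0}(\sF^{\wa},\CE_\lambda)'=V^G_B(\lambda)\ne 0$; so assume otherwise. It then suffices to show that $d$ is \emph{not} injective, since both source and target have finite length, so that an injective $d$ would preserve Jordan--Hölder multiplicities as inequalities.

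To produce an obstruction to injectivity I would pick out an irreducible constituent of $V^G_B(\lambda)$ that occurs in none of the $V^G_B(s_\alpha)$. The natural choice is $\CF^G_G\big(L(\lambda),v^G_B\big)$, a twist of the smooth Steinberg representation $v^G_B$ of $G$ by a finite-dimensional algebraic representation; this is the constituent attached in Theorem~\ref{multiplicities} to the data $v=e$ (so that $I=I(e)=\Delta$) and $J=\emptyset\subset I$. Evaluating the multiplicity formula of Theorem~\ref{multiplicities} with $w=e$, hence $I_0=I(e)=\Delta$ and $J\cap I_0=\emptyset$, gives the multiplicity $m(e,e)=1$ in $V^G_B(\lambda)=V^G_B(e)$. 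Evaluating it with $w=s_\alpha$, hence $I_0=I(s_\alpha)=\Delta\backslash\{\alpha\}$ and again $J\cap I_0=\emptyset$, gives the multiplicity $m(s_\alpha,e)$ in $V^G_B(s_\alpha)$; but $m(s_\alpha,e)=\big[M(s_\alpha\cdot 0):L(0)\big]=\big[M(-\alpha):L(0)\big]=0$, since the trivial module $L(0)$ cannot be a composition factor of $M(-\alpha)$ for weight reasons ($0\not\le-\alpha$). Hence $\CF^G_G(L(\lambda),v^G_B)$ occurs once in the source of $d$ and not at all in its target, so $d$ is not injective, $\ker d\ne 0$, and therefore $H^{i_0}(\sF^{\wa},\CE_\lambda)\ne 0$.

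The delicate point is the first step: one must read off from the second-quadrant double complex $D_{\bullet,\bullet}$ used in the proof of Theorem~\ref{theorem1} that the part of $C_\bullet$ in degree $i_0$ is precisely the term $V^G_B(\lambda)$, with no term above it on that side, so that $H_{i_0}(C_\bullet)$ genuinely is the kernel of $d$ (rather than a cokernel, which the surjectivity statement of Lemma~\ref{surjection} could conceivably force to vanish). Once the degree conventions and the direction of $d$ are pinned down, what remains is a direct bookkeeping check with the multiplicity formula, using in addition that $\CF^G_G(L(\lambda),v^G_B)$ is irreducible and nonzero and that $L(0)$ does not appear in $M(-\alpha)$.
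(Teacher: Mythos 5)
Your proof is correct, and the overall structure (reduce via Theorem~\ref{theorem1} to showing that the kernel of $V^G_B(\lambda)\to\bigoplus_{s_\alpha\in\Omega_\emptyset}V^G_B(s_\alpha)$ is nonzero) matches the paper, but the way you establish the nonvanishing of that kernel is genuinely different. The paper simply invokes \cite[Corollary 4.3]{OSch}, which already identifies $v^G_B(\lambda)$ as the kernel of the full map $V^G_B(\lambda)\to\bigoplus_{w\in W,\,l(w)=1}V^G_B(w)$; since $\Omega_\emptyset\subset W$, this nonzero subobject also sits inside the kernel of the restricted map, and the corollary follows in two lines. You instead rederive the obstruction internally from Theorem~\ref{multiplicities}: the constituent $\CF^G_G(L(\lambda),v^G_B)$ appears with multiplicity $m(e,e)=1$ in $V^G_B(\lambda)$ but with multiplicity $m(s_\alpha,e)=[M(-\alpha):L(0)]=0$ in each $V^G_B(s_\alpha)$ (since $s_\alpha\not\le e$ in the Bruhat order), so the map cannot be injective on a finite-length object. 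The two arguments in fact pinpoint the same obstruction — your $\CF^G_G(L(\lambda),v^G_B)$ \emph{is} the twisted Steinberg that the paper calls $v^G_B(\lambda)$ — but yours keeps the argument self-contained within the multiplicity machinery already developed, at the cost of a slightly longer bookkeeping step. Your caution about verifying that $H_{i_0}(C_\bullet)$ is a kernel rather than a cokernel is well placed and checks out: in the double complex $D_{\bullet,\bullet}$ one has $V^G_B(e)$ at bidegree $(n,-\lb\Delta\rb)$, hence total degree $i_0=n-\lb\Delta\rb$, which is the top degree of $C_\bullet$, so there is nothing mapping in.
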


\begin{proof}
   We know from \cite[Corollary 4.3]{OSch} that 
   $$v^G_B(\lambda)=\Ker\Big(V^G_B(\lambda) \rightarrow \bigoplus_{\substack{w \in W \\ \ l(w)=1}}V^G_B(w)\Big).$$ 
   But then it follows from the previous theorem that 
     $$v^G_B(\lambda)=\Ker\Big(V^G_B(\lambda) \rightarrow \bigoplus_{\substack{w \in W \\ \ l(w)=1}}V^G_B(w)\Big) \subset \Ker\Big(V^G_B(\lambda) \rightarrow \bigoplus_{\substack{w \in \Omega_\emptyset \\ \ l(w)=1}}V^G_B(w)\Big)=H^{i_0}(\sF^{\wa}, \CE_\lambda)'.$$ 
   Therefore, $H^{i_0}(\sF^{\wa}, \CE_\lambda)$ cannot be trivial. 
\end{proof}

\begin{lemma} \label{surjection} Let $w, w' \in \Omega_\emptyset$ with $w' \leq w$ and $l(w)=l(w')+1$. Then, the morphism 
    $$p_{w',w}:V^G_B(w')\rightarrow V^G_B(w)$$
 appearing in the differentials of $C_\bullet$ is surjective. 
 \end{lemma}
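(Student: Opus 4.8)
The plan is to recognize $p_{w',w}$ as the map on cokernels induced by $\CF^G_B$ applied to an injection of Verma modules, and then to invoke the exactness of $\CF^G_B$. Recall from the proof of Theorem~\ref{theorem1} that for each $w\in\Omega_\emptyset$ the representation $V^G_B(w)$ occurs as the cokernel of the relative resolution of Corollary~\ref{relativeresolution}, so that there is a canonical surjection $\pi_w\colon I^G_B(w)\twoheadrightarrow V^G_B(w)$, and that the differentials of $C_\bullet$ are exactly the maps induced on these cokernels by the vertical differentials of the double complex $D_{\bullet,\bullet}$, restricted to the column $I=\emptyset$. Under the identifications $I^G_B(w)=\CF^G_B\big(H^{n-l(w)}_{C(w)}(\sF,\CE_\lambda)\big)$ coming from Definition~\ref{twistedSteinberg} and Lemma~\ref{loccocell}, that vertical differential $I^G_B(w')\to I^G_B(w)$ is, up to a sign, the image under $\CF^G_B$ of the natural morphism $H^{n-l(w)}_{C(w)}(\sF,\CE_\lambda)\to H^{n-l(w')}_{C(w')}(\sF,\CE_\lambda)$ (cf. Remark~\ref{morphismBGG}).

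First I would record that for $w'\le w$ with $l(w)=l(w')+1$ this morphism is injective: this is proved in the last paragraph of the proof of Lemma~\ref{loccocell}, where under the isomorphism of that lemma it becomes the embedding $M(w\cdot\lambda)\hookrightarrow M(w'\cdot\lambda)$ occurring in the BGG resolution of $L(\lambda)$ — in particular it is nonzero, and any nonzero homomorphism between Verma modules is injective. Since $\CF^G_B$ is exact and contravariant in its first argument (Proposition~\ref{exact}), applying it to this injection yields a surjection $I^G_B(w')\twoheadrightarrow I^G_B(w)$.

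It then remains to descend to cokernels. The commutativity of $D_{\bullet,\bullet}$, checked in the proof of Theorem~\ref{theorem1}, forces the vertical differential $I^G_B(w')\to I^G_B(w)$ to map $\ker\pi_{w'}$ into $\ker\pi_w$, so that it passes to a morphism $V^G_B(w')\to V^G_B(w)$; by construction this descended map is $p_{w',w}$. We therefore obtain a commutative square whose top edge $I^G_B(w')\to I^G_B(w)$ is surjective, whose vertical edges $\pi_{w'}$ and $\pi_w$ are the surjections above, and whose bottom edge is $p_{w',w}$. Since $\pi_w\circ(\text{top edge})=p_{w',w}\circ\pi_{w'}$ is a composition of surjections, it is surjective, and hence $p_{w',w}$ is surjective.

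I expect the only real difficulty to be bookkeeping rather than mathematics: one must make sure that the map written $p_{w',w}$ in the differentials of $C_\bullet$ really is the cokernel map induced by $\CF^G_B$ of the BGG embedding, keeping track of the sign conventions and of the various indexing conventions attached to $D_{\bullet,\bullet}$ (length versus cohomological degree, and the two induced spectral sequences). Once that identification is in place, the exactness of $\CF^G_B$ does everything. A more self-contained variant would bypass $D_{\bullet,\bullet}$ and describe $p_{w',w}$ directly via the commutative square above, with the horizontal arrow obtained by applying $\CF^G_B$ to a nonzero — hence injective — homomorphism $M(w\cdot\lambda)\to M(w'\cdot\lambda)$ (compare the transition maps of Verma modules in Example~\ref{genVM}), which again reduces to the same exactness statement.
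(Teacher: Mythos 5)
Your proposal is correct and follows essentially the same route as the paper's proof: identify $p_{w',w}$ as the map on quotients induced by $\CF^G_B(i_{w,w'})$ for the injective (because nonzero) Verma module morphism $i_{w,w'}\colon M(w\cdot\lambda)\hookrightarrow M(w'\cdot\lambda)$ of Remark~\ref{morphismBGG}, apply exactness and contravariance of $\CF^G_B$ to get a surjection $I^G_B(w')\twoheadrightarrow I^G_B(w)$, and finish with the commutative square involving the projections $\pi_{w'},\pi_w$. The only cosmetic difference is that you spell out the injectivity of $i_{w,w'}$ via the BGG-resolution argument in Lemma~\ref{loccocell}, whereas the paper cites a reference directly.
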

 \begin{proof} As seen in the proof of Theorem \ref{theorem1}, the morphism $p_{w',w}:V^G_B(w')\rightarrow V^G_B(w)$ is induced by a morphism $\varphi:I_{B}^G(w') \rightarrow I_{B}^G(w)$. This one in turn comes from a non-trivial morphism $$i_{w,w'}:M(w \cdot \lambda )=H^{n-l(w)}_{C(w)}(\sF,\CE_\lambda) \rightarrow H^{n-l(w')}_{C(w')}(\sF,\CE_\lambda)=M(w' \cdot \lambda )$$ (cf. Remark \ref{morphismBGG}). Thus, $i_{w,w'}$ is injective (cf. \cite[p. 46]{B}) and therefore $\varphi=\CF^G_B(i_{w,w'})$ is surjective (cf. Proposition \ref{exact}). Then, we have the following commutative diagram 
 $$\begin{CD}
 I_{B}^G(w')@>\CF_B^G(i_{w,w'})>>  I_{B}^G(w) \\ 
 @VV\pi V @VV \pi V \\
 V^G_B(w')@>p_{w',w}>>V^G_B(w)\\ 
 \end{CD}$$
 where $\pi$ denote the natural projection onto the quotient. Since all morphism except $p_{w',w}$ in the commutative diagram are surjective, it follows that $p_{w',w}$ is also surjective. 
 \end{proof}

In the following examples, we will compute the composition factors of the homology groups of the complex $C_\bullet$ of Theorem \ref{theorem1} for $\bG=\SL_4$ and some $\mu \in X_*(\bT)$. The strategy is first to 
compute all composition factors with multiplicities of the objects in $C_\bullet$  with the help of Theorem \ref{multiplicities}. This is done with a small program in SAGE (cf. Appendix \ref{AppendixCode}). Then, we can deduce the composition factors of the homology groups by knowing by the previous lemma that the morphism  $p_{w',w}:V^G_B(w')\rightarrow V^G_B(w)$ in the complex $C_\bullet$ is surjective for $w',w \in W$ with  $w' \leq w$ and how composition factors behave under short exact sequences. 
\begin{definition}
    Let $D$ be a composition factor of $V^G_B(\lambda)$ and $n_w:=\big[V^G_B(w):D\big]$ the multiplicity of $D$ in $V^G_B(w)$ for $w \in W$. Then, we define the \textsl{distribution type} of $D$ in the complex $C_\bullet$ by 
    $$\big(n_e,\{n_w\}_{w \in \Omega_\emptyset, \, l(w)=1},\ldots,\{n_w\}_{w \in \Omega_\emptyset, \, l(w)=\dim Y_\emptyset}\big) \in \BN_0^{\lb \Omega_\emptyset\rb}.$$
\end{definition}
\begin{remark}
    The distribution type depends on an ordering on $\Omega_\emptyset$. We will implicitly give such an ordering in each example and hope that causes no confusion with the notation. 
\end{remark}
\begin{example}\label{exampleCohomology}
        Let $\bG=\GL_4$, $\Delta=\{\alpha_1, \alpha_2, \alpha_3 \}$, $S=\{s_1,s_2,s_3\} \subset W$ with $s_i$ corresponding to $\alpha_i$, and $s_1$ commutes with $s_3$. We set $\bP_i=\bP_{\{\alpha_i\}}$ and $\bP_{i,j}=\bP_{\{\alpha_i,\alpha_j\}}$.
        Furthermore let $\mu=(x_1,x_2,x_3,x_4) \in X_*(\bT)\cong \BZ^4$ with $x_1>x_2>x_3>x_4$.
        \begin{enumerate}[label=\alph*),leftmargin=*]
            \item \label{exampleCohomologyA} $\mu=(x_1,x_2,x_3,x_4)$ with $\sum x_i=0$ and $x_3>0$. 
            Then $$\Omega_\emptyset=\{e, s_1, s_2, s_1s_2,s_2s_1, s_1s_2s_1\}$$
            and $$C_\bullet: V_B^G(\lambda) \xrightarrow{f} \bigoplus_{\substack{w \in \Omega_\emptyset \\ \ l(w)=1}} V_B^G(w) \xrightarrow{g}  \bigoplus_{\substack{w \in \Omega_\emptyset \\ \ l(w)=2}} V_B^G(w)  \xrightarrow{h}  V^G_B(s_1s_2s_1).$$
            The appearing distribution types of $C_\bullet$ are 
            \begin{align*}
                &\big(\{2\},\{2,1\},\{1,1\},\{1\}\big),\, \big(\{2\},\{1,2\},\{1,1\},\{1\}\big), \\
                &\big(\{1\},\{1,1\},\{1,1\},\{1\}\big),\, \big(\{1\},\{1,1\},\{1,0\},\{0\}\big), \\
                &\big(\{1\},\{1,1\},\{0,1\},\{0\}\big),\, \big(\{1\},\{1,0\},\{0,0\},\{0\}\big),\\
                & \big(\{1\},\{0,1\},\{0,0\},\{0\}\big),\,  \big(\{1\},\{0,0\},\{0,0\},\{0\}\big).\\ 
            \end{align*}
            As an example for the computations, we consider the distribution type $$\big(\{2\},\{2,1\},\{1,1\},\{1\}\big)$$ and denote a corresponding factor by $D$. 
            Then, $$[\Ker(f):D] \leq [\Ker\big(V^G_B(\lambda)\rightarrow V^G_B(s_1)\big):D]=0.$$ This implies that 
            $[\Im(f):D]=2$. Moreover, $[\Im(h):D]=1$ since $$V^G_B(s_1s_2) \rightarrow V^G_B(s_1s_2s_1)$$ is surjective. Thus, 
            $[\Ker(h):D]=1$. As  the composition $$\bigoplus_{\substack{w \in \Omega_\emptyset \\ \ l(w)=1}} V_B^G(w) \stackrel{g}{\rightarrow} \bigoplus_{\substack{w \in \Omega_\emptyset \\ \ l(w)=2}} V_B^G(w) \stackrel{\pi_1}{\rightarrow} V^G_B(s_1s_2)$$
            is surjective, we have the chain of inequalities 
            $$ 1\leq[\Im(g):D]\leq [\Ker(h):D]=1.$$ 
            Therefore, $[\Im(g):D]=1$ and $[\Ker(g):D]=2$. Finally, we see that 
            $$ [H_i(C_\bullet):D]=0$$ for all $i$. The same arguments applied to all distribution types show that 
            $H_i(C_\bullet)=0$ for $i \neq \dim(\sF)-\lb \Delta \rb=3$ and that $H_3(C_\bullet)=\Ker(f)$ has composition factors precisely
            \begin{equation*}v^G_B(\lambda),\, \CF_{P_{1,2}}^G\Big(L(s_3\cdot \lambda),v^{P_{1,2}}_{B}\Big), \CF_{P_{1,3}}^G\Big(L(s_2s_3\cdot \lambda),v^{P_{1,3}}_{P_{3}}\Big),\,\CF_{P_{2,3}}^G\Big(L(s_1s_2s_3\cdot \lambda),1\Big)
            \end{equation*}
            each with multiplicity one.
            \item \label{exampleCohomologyB}$\mu=(x_1,x_2,x_3,x_4)$ with $\sum x_i=0$ and $x_3=0$.  Then, $$\Omega_\emptyset=\{e, s_1, s_2, s_2s_1 \}$$
            and $$C_\bullet: V_B^G(\lambda) \xrightarrow{f}  \bigoplus_{\substack{w \in \Omega_\emptyset \\ \ l(w)=1}} V_B^G(w) \xrightarrow{g}  V^G_B(s_2s_1). $$ 
            The appearing distribution types in $C_\bullet$ are 
            \begin{align*}
                &\big(\{2\},\{2,1\},\{1\}\big),\, \big(\{2\},\{1,2\},\{1\}\big), \\
                &\big(\{1\},\{1,1\},\{1\}\big),\, \big(\{1\},\{1,1\},\{0\}\big), \\
                & \big(\{1\},\{1,0\},\{0\}\big),\, \big(\{1\},\{0,1\},\{0\}\big),\\
                &  \big(\{1\},\{0,0\},\{0\}\big).
            \end{align*}
            With the same arguments as above, we get that 
            $H_i(C_\bullet)=0$ for $i \neq 2,3$. Furthermore, $H_3(C_\bullet)$ has composition factors precisely
            \begin{align*}  &v^G_B(\lambda),\, \CF_{P_{1,2}}^G\Big(L(s_3\cdot \lambda),v^{P_{1,2}}_{B}\Big), \\ &\CF_{P_{1,3}}^G\Big(L(s_2s_3\cdot \lambda),v^{P_{1,3}}_{P_{3}}\Big),\,\CF_{P_{2,3}}^G\Big(L(s_1s_2s_3\cdot \lambda),1\Big)
            \end{align*}
            each with multiplicity one. Moreover, $H_2(C_\bullet)$ has composition factors precisely
            \begin{align*}  &\CF_{P_{2,3}}^G\Big(L(s_1s_2\cdot \lambda),v^{P_{2,3}}_{B}\Big), \CF_{P_{2,3}}^G\Big(L(s_1s_2s_3\cdot \lambda),v^{P_{2,3}}_{B}\Big), \\ 
                            &\CF_{P_{2}}^G\Big(L(s_3s_1s_2\cdot \lambda),v^{P_{2}}_{B}\Big),\,\CF_{P_{2}}^G\Big(L(s_1s_2s_3s_2\cdot \lambda),v^{P_{2}}_{B}\Big), \\
                            &\CF_{P_{1,3}}^G\Big(L(s_2s_3s_1s_2\cdot \lambda),1\Big),\,\CF_{P_{1,3}}^G\Big(L(s_2s_3s_1s_2\cdot \lambda),v^{P_{1,3}}_{P_{3}}\Big), \\
                            &\CF_{P_{3}}^G\Big(L(s_1s_2s_3s_1s_2\cdot \lambda),1\Big)
            \end{align*}
            each with multiplicity one as well. 
        \item \label{exampleCohomologyC}$\mu=(x_1,x_2,x_3,x_4)$ with $\sum x_i=0$, $x_2>0,x_3<0, x_1+x_4>0, x_2+x_3<0$.  Then,$$\Omega_\emptyset=\{e, s_1, s_2, s_3, s_1s_3, s_2s_3  \}$$
        and $$C_\bullet: V_B^G(\lambda) \xrightarrow{f} \bigoplus_{\substack{w \in \Omega_\emptyset \\ \ l(w)=1}} V_B^G(w) \xrightarrow{g}  V^G_B(s_1s_3) \oplus V^G_B(s_2s_3). $$ 
        The appearing distribution types in $C_\bullet$  are 
        \begin{align*}
            &\big(\{2\},\{2,1,2\},\{2,1\}\big),\, \big(\{2\},\{1,2,1\},\{1,1\}\big),\, \big(\{1\},\{1,1,1\},\{1,1\}\big), \\
            &\big(\{1\},\{1,1,1\},\{1,0\}\big),\, \big(\{1\},\{1,0,1\},\{1,0\}\big),\, \big(\{1\},\{0,1,1\},\{0,1\}\big), \\
            &\big(\{1\},\{1,1,0\},\{0,0\}\big),\, \big(\{1\},\{0,1,1\},\{0,0\}\big),\, \big(\{1\},\{1,0,0\},\{0,0\}\big), \\
            &\big(\{1\},\{0,1,0\},\{0,0\}\big),\, \big(\{1\},\{0,0,1\},\{0,0\}\big),\, \big(\{1\},\{0,0,0\},\{0,0\}\big).
        \end{align*}
        First, we notice that $g$ is surjective since $V^G_B(s_1)$ and $V^G_B(s_2)$ map onto a single but distinct direct summand. Then, we can apply the same arguments as before. We compute that  
        $H_i(C_\bullet)=0$ for $i \neq 2,3$. Furthermore, $H_3(C_\bullet)=v^G_B(\lambda)$. 
     Moreover, $H_2(C_\bullet)$ has composition factors precisely
        \begin{align*}  &\CF_{P_{2,3}}^G\Big(L(s_1s_2\cdot \lambda),v^{P_{2,3}}_{B}\Big),\, \CF_{P_{1,3}}^G\Big(L(s_2s_1\cdot \lambda),v^{P_{1,3}}_{B}\Big),\\
                        &\CF_{P_{1,2}}^G\Big(L(s_3s_2\cdot \lambda),v^{P_{1,2}}_{B}\Big),\, \CF_{P_{3}}^G\Big(L(s_1s_2s_1\cdot \lambda),v^{P_{3}}_{B}\Big),\\
                        &\CF_{P_{1,2}}^G\Big(L(s_3s_2s_1\cdot \lambda),v^{P_{1,2}}_{B}\Big), \, \CF_{P_{1,2}}^G\Big(L(s_3s_2s_1\cdot \lambda),v^{P_{1,2}}_{P_2}\Big), \\
                        &\CF_{P_{2}}^G\Big(L(s_3s_1s_2\cdot \lambda),v^{P_{2}}_{B}\Big), \,\CF_{P_{1,3}}^G\Big(L(s_2s_3s_1s_2\cdot \lambda),1\Big),\\
                        &\CF_{P_{1,3}}^G\Big(L(s_2s_3s_1s_2\cdot \lambda),v^{P_{1,3}}_{P_{1}}\Big), \, \CF_{P_{2}}^G\Big(L(s_3s_1s_2s_1\cdot \lambda),1\Big),\\
                        &\CF_{P_{2}}^G\Big(L(s_3s_1s_2s_1\cdot \lambda),v^{P_{2}}_{B}\Big),\, \CF_{P_{1}}^G\Big(L(s_2s_3s_1s_2s_1\cdot \lambda),1\Big)
        \end{align*}
        each with multiplicity one. 
        \end{enumerate}
\end{example}
\appendix 
\section{Code for composition factors}\label{s:Appendix} \label{AppendixCode}
	 Here, we present the code used for the computation of the Jordan-Hölder factors  of $V^G_B(w)$ with multiplicities from Example \ref{exampleCohomology}. The chosen language is SAGE: \\

	 compositionfactors.sage: 
\begin{lstlisting}[numbers=left, numberstyle=\tiny, xleftmargin= 25pt]
R.<q>=LaurentPolynomialRing(QQ)                                           
KL=KazhdanLusztigPolynomial(W,q)                                          
	 
def supp(W,w):
    supp=set([])
    ref=W.bruhat_interval(1,w)
    for v in ref:
        if v.length()==1:
           supp.add(v)
        return Set(supp)
	 
def I(W,w):
    I=set({})
    for s in W.simple_reflections():
        if (s*w).length()>w.length():
           I.add(s)
    return Set(I) 
	 
def multiplicity(W,w,v,J): 
    x=W.long_element()
    H=I(W,w)
    M=H.intersection(J)
    c=M.cardinality() 
    mult=0 
    ref=W.bruhat_interval(W.one(),v) 
    ref1=[]
    for t in ref:
        ref1.append(t*w.inverse())
    for t in ref1:
        if supp(W,t)==M:
           mult=mult+pow(-1,t.length()+c)*KL.P(x*t*w*x,x*v*x)(1)
    return mult 
	 
def multiplicitytot(W,w):
    res=[]
    c=0
    L=W.bruhat_interval(W.one(),W.long_element())
    for v in L: 
        H=I(W,v)
        S=Subsets(H)
        for J in S:
            m=multiplicity(W,w,v,J)
            if m != 0:
               c=c+1
               h=[]
               h.append(v)
               h.append(H) 
               h.append(J)
               h.append(m)
               res.append(h)
    return res, c 
	\end{lstlisting}
	Then, we applied this part to the relevant Weyl group elements. 
	 \begin{lstlisting}[numbers=left, numberstyle=\tiny, xleftmargin= 25pt]
sage: W = WeylGroup("A3",prefix="s")                                            
sage: [s1,s2,s3]=W.simple_reflections()   
sage: load("compositionfactors.sage")                                      
sage: multiplicitytot(W,W.one())                                                                             
  ([[s1*s2*s3*s1*s2*s1, {}, {}, 1],
    [s2*s3*s1*s2*s1, {s1}, {}, 1],
    [s2*s3*s1*s2*s1, {s1}, {s1}, 1],
    [s1*s2*s3*s2*s1, {s2}, {}, 2],
    [s1*s2*s3*s2*s1, {s2}, {s2}, 1], 
    [s1*s2*s3*s1*s2, {s3}, {}, 1],
    [s1*s2*s3*s1*s2, {s3}, {s3}, 1],
    [s3*s1*s2*s1, {s2}, {}, 1],
    [s3*s1*s2*s1, {s2}, {s2}, 1],
    [s2*s3*s2*s1, {s1}, {}, 1],
    [s2*s3*s2*s1, {s1}, {s1}, 1],
    [s2*s3*s1*s2, {s1, s3}, {}, 2],
    [s2*s3*s1*s2, {s1, s3}, {s1}, 1],
    [s2*s3*s1*s2, {s1, s3}, {s3}, 1],
    [s2*s3*s1*s2, {s1, s3}, {s1, s3}, 1],
    [s1*s2*s3*s1, {s3}, {}, 1],
    [s1*s2*s3*s1, {s3}, {s3}, 1],
    [s1*s2*s3*s2, {s2}, {}, 1],
    [s1*s2*s3*s2, {s2}, {s2}, 1],
    [s1*s2*s1, {s3}, {}, 1],
    [s3*s2*s1, {s1, s2}, {}, 1],
    [s3*s2*s1, {s1, s2}, {s1}, 1],
    [s3*s2*s1, {s1, s2}, {s2}, 1],
    [s3*s2*s1, {s1, s2}, {s1, s2}, 1],
    [s3*s1*s2, {s2}, {}, 1],
    [s3*s1*s2, {s2}, {s2}, 1],
    [s2*s3*s1, {s1, s3}, {}, 1],
    [s2*s3*s1, {s1, s3}, {s1}, 1],
    [s2*s3*s1, {s1, s3}, {s3}, 1],
    [s2*s3*s1, {s1, s3}, {s1, s3}, 1],
    [s2*s3*s2, {s1}, {}, 1],
    [s1*s2*s3, {s3, s2}, {}, 1],
    [s1*s2*s3, {s3, s2}, {s3}, 1],
    [s1*s2*s3, {s3, s2}, {s2}, 1],
    [s1*s2*s3, {s3, s2}, {s3, s2}, 1],
    [s2*s1, {s1, s3}, {}, 1],
    [s2*s1, {s1, s3}, {s1}, 1],
    [s1*s2, {s3, s2}, {}, 1],
    [s1*s2, {s3, s2}, {s2}, 1],
    [s3*s1, {s2}, {}, 1],
    [s3*s2, {s1, s2}, {}, 1],
    [s3*s2, {s1, s2}, {s2}, 1],
    [s2*s3, {s1, s3}, {}, 1],
    [s2*s3, {s1, s3}, {s3}, 1],
    [s1, {s3, s2}, {}, 1],
    [s2, {s1, s3}, {}, 1],
    [s3, {s1, s2}, {}, 1],
    [1, {s1, s3, s2}, {}, 1]],
    48)
sage: multiplicitytot(W,s1)                                                                                  
  ([[s1*s2*s3*s1*s2*s1, {}, {}, 1],
    [s2*s3*s1*s2*s1, {s1}, {}, 1],
    [s2*s3*s1*s2*s1, {s1}, {s1}, 1],
    [s1*s2*s3*s2*s1, {s2}, {}, 2],
    [s1*s2*s3*s2*s1, {s2}, {s2}, 1],
    [s1*s2*s3*s1*s2, {s3}, {}, 1],
    [s1*s2*s3*s1*s2, {s3}, {s3}, 1],
    [s3*s1*s2*s1, {s2}, {}, 1],
    [s3*s1*s2*s1, {s2}, {s2}, 1],
    [s2*s3*s2*s1, {s1}, {}, 1],
    [s2*s3*s2*s1, {s1}, {s1}, 1],
    [s2*s3*s1*s2, {s1, s3}, {}, 1],
    [s2*s3*s1*s2, {s1, s3}, {s1}, 1],
    [s2*s3*s1*s2, {s1, s3}, {s3}, 1],
    [s2*s3*s1*s2, {s1, s3}, {s1, s3}, 1],
    [s1*s2*s3*s1, {s3}, {}, 1],
    [s1*s2*s3*s1, {s3}, {s3}, 1],
    [s1*s2*s3*s2, {s2}, {}, 1],
    [s1*s2*s1, {s3}, {}, 1],
    [s3*s2*s1, {s1, s2}, {}, 1],
    [s3*s2*s1, {s1, s2}, {s1}, 1],
    [s3*s2*s1, {s1, s2}, {s2}, 1],
    [s3*s2*s1, {s1, s2}, {s1, s2}, 1],
    [s3*s1*s2, {s2}, {}, 1],
    [s2*s3*s1, {s1, s3}, {}, 1],
    [s2*s3*s1, {s1, s3}, {s1}, 1],
    [s2*s3*s1, {s1, s3}, {s3}, 1],
	[s2*s3*s1, {s1, s3}, {s1, s3}, 1],
    [s1*s2*s3, {s3, s2}, {}, 1],
    [s1*s2*s3, {s3, s2}, {s3}, 1],
    [s2*s1, {s1, s3}, {}, 1],
    [s2*s1, {s1, s3}, {s1}, 1],
    [s1*s2, {s3, s2}, {}, 1],
    [s3*s1, {s2}, {}, 1],
    [s1, {s3, s2}, {}, 1]],
    35)
sage: multiplicitytot(W,s2)                                                                                  
  ([[s1*s2*s3*s1*s2*s1, {}, {}, 1],
    [s2*s3*s1*s2*s1, {s1}, {}, 1],
    [s2*s3*s1*s2*s1, {s1}, {s1}, 1],
    [s1*s2*s3*s2*s1, {s2}, {}, 1],
    [s1*s2*s3*s2*s1, {s2}, {s2}, 1],
    [s1*s2*s3*s1*s2, {s3}, {}, 1],
    [s1*s2*s3*s1*s2, {s3}, {s3}, 1],
    [s3*s1*s2*s1, {s2}, {}, 1],
    [s3*s1*s2*s1, {s2}, {s2}, 1],
    [s2*s3*s2*s1, {s1}, {}, 1],
    [s2*s3*s1*s2, {s1, s3}, {}, 2],
    [s2*s3*s1*s2, {s1, s3}, {s1}, 1],
    [s2*s3*s1*s2, {s1, s3}, {s3}, 1],
    [s2*s3*s1*s2, {s1, s3}, {s1, s3}, 1],
    [s1*s2*s3*s1, {s3}, {}, 1],
    [s1*s2*s3*s2, {s2}, {}, 1],
    [s1*s2*s3*s2, {s2}, {s2}, 1],
    [s1*s2*s1, {s3}, {}, 1],
    [s3*s2*s1, {s1, s2}, {}, 1],
    [s3*s2*s1, {s1, s2}, {s2}, 1],
    [s3*s1*s2, {s2}, {}, 1],
    [s3*s1*s2, {s2}, {s2}, 1],
    [s2*s3*s1, {s1, s3}, {}, 1],
    [s2*s3*s2, {s1}, {}, 1],
    [s1*s2*s3, {s3, s2}, {}, 1],
    [s1*s2*s3, {s3, s2}, {s2}, 1],
    [s2*s1, {s1, s3}, {}, 1],
    [s1*s2, {s3, s2}, {}, 1],
    [s1*s2, {s3, s2}, {s2}, 1],
    [s3*s2, {s1, s2}, {}, 1],
    [s3*s2, {s1, s2}, {s2}, 1],
    [s2*s3, {s1, s3}, {}, 1],
    [s2, {s1, s3}, {}, 1]],
    33)
sage: multiplicitytot(W,s1*s2)                                                      
  ([[s1*s2*s3*s1*s2*s1, {}, {}, 1],
    [s2*s3*s1*s2*s1, {s1}, {}, 1],
    [s2*s3*s1*s2*s1, {s1}, {s1}, 1],
    [s1*s2*s3*s2*s1, {s2}, {}, 1],
    [s1*s2*s3*s2*s1, {s2}, {s2}, 1],
    [s1*s2*s3*s1*s2, {s3}, {}, 1],
    [s1*s2*s3*s1*s2, {s3}, {s3}, 1],
    [s3*s1*s2*s1, {s2}, {}, 1],
    [s3*s1*s2*s1, {s2}, {s2}, 1],
    [s2*s3*s1*s2, {s1, s3}, {}, 1],
    [s2*s3*s1*s2, {s1, s3}, {s1}, 1],
    [s2*s3*s1*s2, {s1, s3}, {s3}, 1],
    [s2*s3*s1*s2, {s1, s3}, {s1, s3}, 1],
    [s1*s2*s3*s1, {s3}, {}, 1],
    [s1*s2*s3*s2, {s2}, {}, 1],
    [s1*s2*s1, {s3}, {}, 1],
    [s3*s1*s2, {s2}, {}, 1],
    [s1*s2*s3, {s3, s2}, {}, 1],
    [s1*s2, {s3, s2}, {}, 1]],
    19)
sage: multiplicitytot(W,s2*s1)                                                                               
  ([[s1*s2*s3*s1*s2*s1, {}, {}, 1],
    [s2*s3*s1*s2*s1, {s1}, {}, 1],
    [s2*s3*s1*s2*s1, {s1}, {s1}, 1],
    [s1*s2*s3*s2*s1, {s2}, {}, 1],
    [s1*s2*s3*s2*s1, {s2}, {s2}, 1],
    [s1*s2*s3*s1*s2, {s3}, {}, 1],
    [s3*s1*s2*s1, {s2}, {}, 1],
    [s3*s1*s2*s1, {s2}, {s2}, 1],
    [s2*s3*s2*s1, {s1}, {}, 1],
    [s2*s3*s1*s2, {s1, s3}, {}, 1],
    [s2*s3*s1*s2, {s1, s3}, {s1}, 1],
    [s1*s2*s3*s1, {s3}, {}, 1],
    [s1*s2*s1, {s3}, {}, 1],
    [s3*s2*s1, {s1, s2}, {}, 1],
    [s3*s2*s1, {s1, s2}, {s2}, 1],
    [s2*s3*s1, {s1, s3}, {}, 1],
    [s2*s1, {s1, s3}, {}, 1]],
    17)
sage: multiplicitytot(W,s1*s2*s1)                                                                            
  ([[s1*s2*s3*s1*s2*s1, {}, {}, 1],
    [s2*s3*s1*s2*s1, {s1}, {}, 1],
    [s2*s3*s1*s2*s1, {s1}, {s1}, 1],
    [s1*s2*s3*s2*s1, {s2}, {}, 1],
    [s1*s2*s3*s2*s1, {s2}, {s2}, 1],
    [s1*s2*s3*s1*s2, {s3}, {}, 1],
    [s3*s1*s2*s1, {s2}, {}, 1],
    [s3*s1*s2*s1, {s2}, {s2}, 1],
    [s2*s3*s1*s2, {s1, s3}, {}, 1],
    [s2*s3*s1*s2, {s1, s3}, {s1}, 1],
    [s1*s2*s3*s1, {s3}, {}, 1],
    [s1*s2*s1, {s3}, {}, 1]],
    12)
sage: multiplicitytot(W,s3)                                                                                  
  ([[s1*s2*s3*s1*s2*s1, {}, {}, 1],
    [s2*s3*s1*s2*s1, {s1}, {}, 1],
    [s2*s3*s1*s2*s1, {s1}, {s1}, 1],
    [s1*s2*s3*s2*s1, {s2}, {}, 2],
    [s1*s2*s3*s2*s1, {s2}, {s2}, 1],
    [s1*s2*s3*s1*s2, {s3}, {}, 1],
    [s1*s2*s3*s1*s2, {s3}, {s3}, 1],
    [s3*s1*s2*s1, {s2}, {}, 1],
    [s2*s3*s2*s1, {s1}, {}, 1],
    [s2*s3*s2*s1, {s1}, {s1}, 1],
    [s2*s3*s1*s2, {s1, s3}, {}, 1],
    [s2*s3*s1*s2, {s1, s3}, {s1}, 1],
    [s2*s3*s1*s2, {s1, s3}, {s3}, 1],
    [s2*s3*s1*s2, {s1, s3}, {s1, s3}, 1],
    [s1*s2*s3*s1, {s3}, {}, 1],
    [s1*s2*s3*s1, {s3}, {s3}, 1],
    [s1*s2*s3*s2, {s2}, {}, 1],
    [s1*s2*s3*s2, {s2}, {s2}, 1],
    [s3*s2*s1, {s1, s2}, {}, 1],
    [s3*s2*s1, {s1, s2}, {s1}, 1],
    [s3*s1*s2, {s2}, {}, 1],
    [s2*s3*s1, {s1, s3}, {}, 1],
    [s2*s3*s1, {s1, s3}, {s1}, 1],
    [s2*s3*s1, {s1, s3}, {s3}, 1],
    [s2*s3*s1, {s1, s3}, {s1, s3}, 1],
    [s2*s3*s2, {s1}, {}, 1],
    [s1*s2*s3, {s3, s2}, {}, 1],
    [s1*s2*s3, {s3, s2}, {s3}, 1],
    [s1*s2*s3, {s3, s2}, {s2}, 1],
    [s1*s2*s3, {s3, s2}, {s3, s2}, 1],
    [s3*s1, {s2}, {}, 1],
    [s3*s2, {s1, s2}, {}, 1],
    [s2*s3, {s1, s3}, {}, 1],
    [s2*s3, {s1, s3}, {s3}, 1],
    [s3, {s1, s2}, {}, 1]],
    35)
sage: multiplicitytot(W,s1*s3)                                                                               
  ([[s1*s2*s3*s1*s2*s1, {}, {}, 1],
    [s2*s3*s1*s2*s1, {s1}, {}, 1],
    [s2*s3*s1*s2*s1, {s1}, {s1}, 1],
    [s1*s2*s3*s2*s1, {s2}, {}, 2],
    [s1*s2*s3*s2*s1, {s2}, {s2}, 1],
    [s1*s2*s3*s1*s2, {s3}, {}, 1],
    [s1*s2*s3*s1*s2, {s3}, {s3}, 1],
    [s3*s1*s2*s1, {s2}, {}, 1],
    [s2*s3*s2*s1, {s1}, {}, 1],
    [s2*s3*s2*s1, {s1}, {s1}, 1],
    [s2*s3*s1*s2, {s1, s3}, {}, 1],
    [s2*s3*s1*s2, {s1, s3}, {s1}, 1],
    [s2*s3*s1*s2, {s1, s3}, {s3}, 1],
    [s2*s3*s1*s2, {s1, s3}, {s1, s3}, 1],
    [s1*s2*s3*s1, {s3}, {}, 1],
    [s1*s2*s3*s1, {s3}, {s3}, 1],
    [s1*s2*s3*s2, {s2}, {}, 1],
    [s3*s2*s1, {s1, s2}, {}, 1],
    [s3*s2*s1, {s1, s2}, {s1}, 1],
    [s3*s1*s2, {s2}, {}, 1],
    [s2*s3*s1, {s1, s3}, {}, 1],
    [s2*s3*s1, {s1, s3}, {s1}, 1],
    [s2*s3*s1, {s1, s3}, {s3}, 1],
    [s2*s3*s1, {s1, s3}, {s1, s3}, 1],
    [s1*s2*s3, {s3, s2}, {}, 1],
    [s1*s2*s3, {s3, s2}, {s3}, 1],
    [s3*s1, {s2}, {}, 1]],
    27)
sage: multiplicitytot(W,s2*s3)                                                                               
  ([[s1*s2*s3*s1*s2*s1, {}, {}, 1],
    [s2*s3*s1*s2*s1, {s1}, {}, 1],
    [s1*s2*s3*s2*s1, {s2}, {}, 1],
    [s1*s2*s3*s2*s1, {s2}, {s2}, 1],
    [s1*s2*s3*s1*s2, {s3}, {}, 1],
    [s1*s2*s3*s1*s2, {s3}, {s3}, 1],
    [s2*s3*s2*s1, {s1}, {}, 1],
    [s2*s3*s1*s2, {s1, s3}, {}, 1],
    [s2*s3*s1*s2, {s1, s3}, {s3}, 1],
    [s1*s2*s3*s1, {s3}, {}, 1],
    [s1*s2*s3*s2, {s2}, {}, 1],
    [s1*s2*s3*s2, {s2}, {s2}, 1],
    [s2*s3*s1, {s1, s3}, {}, 1],
    [s2*s3*s2, {s1}, {}, 1],
    [s1*s2*s3, {s3, s2}, {}, 1],
    [s1*s2*s3, {s3, s2}, {s2}, 1],
    [s2*s3, {s1, s3}, {}, 1]],
    17)
sage: multiplicitytot(W,s3*s2)                                                                               
  ([[s1*s2*s3*s1*s2*s1, {}, {}, 1],
    [s2*s3*s1*s2*s1, {s1}, {}, 1],
    [s2*s3*s1*s2*s1, {s1}, {s1}, 1],
    [s1*s2*s3*s2*s1, {s2}, {}, 1],
    [s1*s2*s3*s2*s1, {s2}, {s2}, 1],
    [s1*s2*s3*s1*s2, {s3}, {}, 1],
    [s1*s2*s3*s1*s2, {s3}, {s3}, 1],
    [s3*s1*s2*s1, {s2}, {}, 1],
    [s2*s3*s2*s1, {s1}, {}, 1],
    [s2*s3*s1*s2, {s1, s3}, {}, 1],
    [s2*s3*s1*s2, {s1, s3}, {s1}, 1],
    [s2*s3*s1*s2, {s1, s3}, {s3}, 1],
    [s2*s3*s1*s2, {s1, s3}, {s1, s3}, 1],
    [s1*s2*s3*s2, {s2}, {}, 1],
    [s1*s2*s3*s2, {s2}, {s2}, 1],
    [s3*s2*s1, {s1, s2}, {}, 1],
    [s3*s1*s2, {s2}, {}, 1],
    [s2*s3*s2, {s1}, {}, 1],
    [s3*s2, {s1, s2}, {}, 1]],
    19)
	 
	\end{lstlisting}

\printbibliography[title={References}]
\end{document}